\providecommand{\tabularnewline}{\\}
\def\RSthmtxt{theorem~}\newref{thm}{name = \RSthmtxt}}
\def\RSlemtxt{lemma~}\newref{lem}{name = \RSlemtxt}}
\numberwithin{equation}{section}
\numberwithin{figure}{section}
\numberwithin{table}{section}
\theoremstyle{plain}
\newtheorem{thm}{\protect\theoremname}[section]
\theoremstyle{plain}
\newtheorem{lem}[thm]{\protect\lemmaname}
\theoremstyle{definition}
\newtheorem{defn}[thm]{\protect\definitionname}
\theoremstyle{plain}
\newtheorem{prop}[thm]{\protect\propositionname}
\theoremstyle{remark}
\newtheorem{rem}[thm]{\protect\remarkname}
\theoremstyle{plain}
\newtheorem{cor}[thm]{\protect\corollaryname}
\theoremstyle{definition}
\newtheorem{example}[thm]{\protect\examplename}
\theoremstyle{remark}
\newtheorem*{acknowledgement*}{\protect\acknowledgementname}
\setlist[enumerate,1]{label=(\roman*),ref=\roman*}
\setlist[enumerate,2]{label=(\alph*),ref=\theenumi \alph*}
\providecommand{\acknowledgementname}{Acknowledgement}
\providecommand{\corollaryname}{Corollary}
\providecommand{\definitionname}{Definition}
\providecommand{\examplename}{Example}
\providecommand{\lemmaname}{Lemma}
\providecommand{\propositionname}{Proposition}
\providecommand{\remarkname}{Remark}
\providecommand{\theoremname}{Theorem}
\begin{document}
\title[]{On reproducing kernels, and analysis of measures}
\dedicatory{To the memory of J\o rgen Hoffmann-J\o rgensen\textsuperscript{$1$}.}
\begin{abstract}
Starting with the correspondence between positive definite kernels
on the one hand and reproducing kernel Hilbert spaces (RKHSs) on the
other, we turn to a detailed analysis of associated measures and Gaussian
processes. Point of departure: Every positive definite kernel is also
the covariance kernel of a Gaussian process.

Given a fixed sigma-finite measure $\mu$, we consider positive definite
kernels defined on the subset of the sigma algebra having finite $\mu$
measure. We show that then the corresponding Hilbert factorizations
consist of signed measures, finitely additive, but not automatically
sigma-additive. We give a necessary and sufficient condition for when
the measures in the RKHS, and the Hilbert factorizations, are sigma-additive.
Our emphasis is the case when $\mu$ is assumed non-atomic. By contrast,
when $\mu$ is known to be atomic, our setting is shown to generalize
that of Shannon-interpolation. Our RKHS-approach further leads to
new insight into the associated Gaussian processes, their It\^{o}
calculus and diffusion. Examples include fractional Brownian motion,
and time-change processes.
\end{abstract}

\author{Palle Jorgensen and Feng Tian}
\address{(Palle E.T. Jorgensen) Department of Mathematics, The University of
Iowa, Iowa City, IA 52242-1419, U.S.A. }
\email{palle-jorgensen@uiowa.edu}
\urladdr{http://www.math.uiowa.edu/\textasciitilde jorgen/}
\address{(Feng Tian) Department of Mathematics, Hampton University, Hampton,
VA 23668, U.S.A.}
\email{feng.tian@hamptonu.edu}
\subjclass[2000]{Primary 47L60, 46N30, 46N50, 42C15, 65R10, 31C20, 62D05, 94A20, 39A12;
Secondary 46N20, 22E70, 31A15, 58J65}
\keywords{Hilbert space, reproducing kernel Hilbert space, harmonic analysis,
Gaussian free fields, transforms, covariance.\\
\textsuperscript{1}J\o rgen Hoffmann-J\o rgensen (1942-2017) was
a pioneer in probability theory; adding there to the subject both
profound insight and originality. He made several fundamental contributions,
including a result, now known as the Hoffmann-J\o rgensen's inequality.
He was a visionary pioneer in high-dimensional probability theory;
and he was a former colleague, in Aarhus, of the first named author.}

\maketitle
\tableofcontents{}

\section{Introduction}

A reproducing kernel Hilbert space (RKHS) is a Hilbert space $\mathscr{H}$
of functions (defined on a prescribed set) in which point-evaluation
is a continuous linear functional; so continuity is required to hold
with respect to the norm in $\mathscr{H}$. These Hilbert spaces (RKHS)
have a host of applications, including to complex analysis, to harmonic
analysis, and to quantum mechanics.

A fundamental theorem of Aronszajn yields an explicit correspondence
between positive definite kernels on the one hand and RKHSs on the
other. Now every positive definite kernel is also the covariance kernel
of a Gaussian process; a fact which is a point of departure in our
present analysis. Given a positive definite kernel, we shall explore
its use in the analysis of the associated Gaussian process; and vice
versa.

This point of view is especially fruitful when one is dealing with
problems from stochastic analysis. Even restricting to stochastic
analysis, we have the exciting area of applications to statistical
learning theory \cite{MR2327597,MR3236858}. The RKHSs are useful
in statistical learning theory on account of a powerful representer
theorem: It states that every function in an RKHS that minimizes an
associated empirical risk-function can be written as a generalized
linear combination of samplings of the kernel function; i.e., samples
evaluated at prescribed training points. Hence, it is a popular tool
for empirical risk minimization problems, as it adapts perfectly to
a host of infinite dimensional optimization problems.

Analysis with the use of reproducing kernel Hilbert space (RKHS) has
found diverse applications in many areas. However, presently we shall
focus on applications to probability theory; applications to such
important and related topics as metric entropy computations, to small
deviation problems for Gaussian processes, and to i.i.d. series representations
for general classes of Gaussian processes. We refer to a detailed
discussion of these items below, with citations.

Recall that a \emph{reproducing kernel Hilbert space} (RKHS) is a
Hilbert space $\mathscr{H}$ of functions, say $f$, on a fixed set
$X$ such that every linear functional (induced by $x\in X$),  
\begin{equation}
E_{x}\left(f\right):=f\left(x\right),\quad f\in\mathscr{H}.\label{eq:A1}
\end{equation}
is continuous in the norm of $\mathscr{H}$. 

Hence, by Riesz' representation theorem, there is a corresponding
$h_{x}\in\mathscr{H}$ such that 
\begin{equation}
E_{x}f=\left\langle f,h_{x}\right\rangle _{\mathscr{H}}\label{eq:I2}
\end{equation}
where $\left\langle \cdot,\cdot\right\rangle _{\mathscr{H}}$ denotes
the inner product in $\mathscr{H}$. Setting 
\[
K\left(x,y\right)=\left\langle h_{y},h_{x}\right\rangle _{\mathscr{H}},\quad\left(x,y\right)\in X\times X
\]
we get a \emph{positive definite} (p.d.) kernel, i.e., $\forall n\in\mathbb{N}$,
$\forall\left\{ \alpha_{i}\right\} _{1}^{n}$, $\forall\left\{ x_{i}\right\} _{1}^{n}$,
$\alpha_{i}\in\mathbb{C}$, $x_{i}\in X$, we have 
\begin{equation}
\sum_{i}\sum_{j}\alpha_{i}\overline{\alpha}_{j}K\left(x_{i},x_{j}\right)\geq0.\label{eq:I3}
\end{equation}

Conversely, if $K$ is given p.d., i.e., satisfying (\ref{eq:I3}),
then by \cite{MR0051437}, there is a RKHS such that (\ref{eq:I2})
holds.

Given $K$ p.d., we may take $\mathscr{H}\left(K\right)$ to be the
completion of 
\begin{equation}
\psi=\sum_{i}\alpha_{i}K\left(\cdot,x_{i}\right)\label{eq:I4}
\end{equation}
in the norm 
\begin{equation}
\left\Vert \psi\right\Vert _{\mathscr{H}\left(K\right)}^{2}=\sum_{i}\sum_{j}\alpha_{i}\overline{\alpha}_{j}K\left(x_{i},x_{j}\right),
\end{equation}
but quotiented out by those functions $\psi$ in (\ref{eq:I4}) with
$\left\Vert \psi\right\Vert _{\mathscr{H}\left(K\right)}^{2}=0$.
(In fact, by \lemref{I1} below, $\left\Vert \psi\right\Vert _{\mathscr{H}\left(K\right)}=0$
implies that $\psi\left(x\right)=0$, for all $x\in X$.)

A key fact which we shall be using throughout the paper is the following: 
\begin{lem}
\label{lem:I1}Let $K$ be a positive definite kernel on $X\times X$,
and let $\mathscr{H}\left(K\right)$ be the corresponding RKHS. 

Then a function $f$ on $X$ is in $\mathscr{H}\left(K\right)$ \uline{iff}
there is a finite constant $C=C_{f}$, depending on $f$, such that
$\forall n\in\mathbb{N}$, $\forall\left\{ x_{i}\right\} _{1}^{n}$,
$\left\{ \alpha_{i}\right\} _{1}^{n}$, $x_{i}\in X$, $\alpha_{i}\in\mathbb{C}$,
we have: 
\begin{equation}
\left|\sum\nolimits _{i=1}^{n}\alpha_{i}f\left(x_{i}\right)\right|^{2}\leq C\sum\nolimits _{i}\sum\nolimits _{j}\alpha_{i}\overline{\alpha}_{j}K\left(x_{i},x_{j}\right).\label{eq:I6}
\end{equation}
\end{lem}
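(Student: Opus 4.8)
The plan is to prove the two implications separately; the ``only if'' direction is essentially Cauchy--Schwarz, while the ``if'' direction is a Riesz-representation argument on the dense span of kernel sections, with one step (well-definedness of an auxiliary functional) carrying the real weight.

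For the forward implication, suppose $f\in\mathscr{H}\left(K\right)$. By the reproducing property \eqref{eq:I2}, $f\left(x\right)=\left\langle f,K\left(\cdot,x\right)\right\rangle _{\mathscr{H}\left(K\right)}$ for all $x\in X$, so for any finite system $\left\{ x_{i}\right\} $, $\left\{ \alpha_{i}\right\} $ one has
\[
\sum\nolimits _{i}\alpha_{i}f\left(x_{i}\right)=\Bigl\langle f,\;\sum\nolimits _{i}\overline{\alpha}_{i}K\left(\cdot,x_{i}\right)\Bigr\rangle _{\mathscr{H}\left(K\right)}.
\]
Applying Cauchy--Schwarz and then expanding $\bigl\Vert\sum_{i}\overline{\alpha}_{i}K\left(\cdot,x_{i}\right)\bigr\Vert_{\mathscr{H}\left(K\right)}^{2}$ via $\left\langle K\left(\cdot,x_{i}\right),K\left(\cdot,x_{j}\right)\right\rangle _{\mathscr{H}\left(K\right)}=K\left(x_{j},x_{i}\right)$ and the Hermitian symmetry of $K$ yields exactly $\sum_{i}\sum_{j}\alpha_{i}\overline{\alpha}_{j}K\left(x_{i},x_{j}\right)$ on the right. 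Hence \eqref{eq:I6} holds with $C_{f}=\left\Vert f\right\Vert _{\mathscr{H}\left(K\right)}^{2}$ (indeed this is the optimal constant).

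For the converse, assume \eqref{eq:I6} holds for some finite $C_{f}$. Let $\mathscr{H}_{0}\subset\mathscr{H}\left(K\right)$ be the dense linear span of $\left\{ K\left(\cdot,x\right):x\in X\right\} $, and try to define a linear functional $\ell$ on $\mathscr{H}_{0}$ by $\ell\bigl(\sum_{i}\alpha_{i}K\left(\cdot,x_{i}\right)\bigr):=\sum_{i}\alpha_{i}\overline{f\left(x_{i}\right)}$; the conjugation is inserted only to match the slot convention in the Riesz step below. \emph{The hard part is well-definedness}: if two finite combinations represent the same vector of $\mathscr{H}\left(K\right)$, their difference $\sum_{k}\gamma_{k}K\left(\cdot,z_{k}\right)$ has norm zero, so $\sum_{k}\sum_{l}\overline{\gamma}_{k}\gamma_{l}K\left(z_{k},z_{l}\right)=0$, and then \eqref{eq:I6} applied with coefficients $\overline{\gamma}_{k}$ (again using Hermitian symmetry) forces $\sum_{k}\gamma_{k}\overline{f\left(z_{k}\right)}=0$, i.e.\ the two representations give the same value. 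The same inequality \eqref{eq:I6} gives $\left|\ell\left(\psi\right)\right|^{2}\leq C_{f}\left\Vert \psi\right\Vert _{\mathscr{H}\left(K\right)}^{2}$ on $\mathscr{H}_{0}$, so $\ell$ extends by continuity to a bounded linear functional on all of $\mathscr{H}\left(K\right)$ with norm $\leq\sqrt{C_{f}}$.

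To finish, invoke the Riesz representation theorem to get $g\in\mathscr{H}\left(K\right)$ with $\ell\left(\cdot\right)=\left\langle \cdot,g\right\rangle _{\mathscr{H}\left(K\right)}$. Evaluating at $\psi=K\left(\cdot,x\right)$ and using the reproducing property gives $\overline{f\left(x\right)}=\left\langle K\left(\cdot,x\right),g\right\rangle _{\mathscr{H}\left(K\right)}=\overline{g\left(x\right)}$ for every $x\in X$, hence $f=g\in\mathscr{H}\left(K\right)$, as desired. I would also remark that the well-definedness step recovers the parenthetical statement made just before the lemma --- that a finite combination $\psi$ of kernel sections with $\left\Vert \psi\right\Vert _{\mathscr{H}\left(K\right)}=0$ vanishes identically as a function --- either by taking $f$ there to be $\psi$ itself, or more directly from the reproducing property; and that the conjugation bookkeeping is harmless and could be circumvented by instead taking $\ell$ conjugate-linear.
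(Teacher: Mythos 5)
Your proposal is correct and follows essentially the same route as the paper's own proof: the forward direction by Cauchy--Schwarz against the expanded norm of a combination of kernel sections, and the converse by defining a bounded linear functional on the dense span of $\left\{ K\left(\cdot,x\right)\right\} $, extending it, and invoking the Riesz representation theorem together with the reproducing property to identify the representing vector with $f$. Your explicit treatment of well-definedness (and of the conjugation bookkeeping) merely fills in details the paper's sketch leaves implicit, so no further comparison is needed.
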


\begin{proof}[Proof sketch (for the benefit of the readers)]
One direction in the proof is immediate from the following observation
regarding the norm $\left\Vert \cdot\right\Vert _{\mathscr{H}\left(K\right)}$
in the RKHS $\mathscr{H}\left(K\right)$. Here a reproducing kernel
$K$ is fixed: For all finite sums, $\alpha_{i}\in\mathbb{C}$, $x_{i}\in X$,
$1\leq i\leq N$, we then have:
\[
\left\Vert \sum\nolimits _{i=1}^{N}\alpha_{i}K\left(x_{i},\cdot\right)\right\Vert _{\mathscr{H}\left(X\right)}^{2}=\sum\nolimits _{i=1}^{N}\sum\nolimits _{j=1}^{N}\alpha_{i}\overline{\alpha_{j}}K\left(x_{i},x_{j}\right),
\]
i.e., the RKHS in (\ref{eq:I6}).

Now assume a function $f$ on $X$ is given to satisfy (\ref{eq:I6}).
Then define a linear functional $T_{f}$ on $\mathscr{H}\left(K\right)$,
as follows: First define it on the above finite linear combinations
(recall dense in $\mathscr{H}\left(K\right)$): 
\[
T_{f}\left(\sum\nolimits _{i=1}^{N}\alpha_{i}K\left(x_{i},\cdot\right)\right)=\sum\nolimits _{i=1}^{N}\alpha_{i}f\left(x_{i}\right).
\]
The assumption (\ref{eq:I6}) simply amounts to the following \emph{a
priori} estimate:
\begin{equation}
\left|T_{f}\left(\psi\right)\right|^{2}\leq C\left\Vert \psi\right\Vert _{\mathscr{H}\left(K\right)}^{2}\label{eq:A7}
\end{equation}
where $\psi$ has the form of (\ref{eq:I4}). Since, by (\ref{eq:A7}),
$T_{f}$ defines a bounded linear functional on a dense subspace in
$\mathscr{H}\left(K\right)$, it extends by limits (in the $\mathscr{H}\left(K\right)$-norm)
to $\mathscr{H}\left(K\right)$. So by Riesz' lemma (for Hilbert spaces)
applied to $\mathscr{H}\left(K\right)$, we get the stated inner-product
representation
\[
T_{f}\left(\psi\right)=\left\langle F,\psi\right\rangle _{\mathscr{H}\left(K\right)}
\]
for a unique $F\in\mathscr{H}\left(K\right)$. Using again the reproducing
property (\ref{eq:A1})-(\ref{eq:I2}) for $\left\langle \cdot,\cdot\right\rangle _{\mathscr{H}\left(K\right)}$
(inner product), we conclude that $F=f$ holds (pointwise identity)
for the two functions; hence $f\in\mathscr{H}\left(K\right)$. 
\end{proof}
Our present core theme, is motivated by, and makes direct connections
to, a number of areas in probability theory. For the benefit of readers,
we add below some hints to a number of such important and related
topics, metric entropy, small deviation problems for Gaussian processes,
and series representations of Gaussian processes. Of special note
are the following three:

\textbf{1.} Metric entropy of the unit ball of the RKHS of Gaussian
measures/processes. We refer to the fundamental papers by Dudley and
Sudakov \cite{MR0220340,MR0247034}. For a reformulation of their
results in functional-analytic terms see \cite{MR695735}.

\textbf{2.} Small ball problems for Gaussian measures on Banach spaces/small
deviation problems for Gaussian processes. Kuelbs and Li achieved
a breakthrough in this area \cite{MR1237989}. Further relevant contributions
(including also fractional Brownian motion) can be found e.g. in \cite{MR1733160}
and \cite{MR1724026}.

\textbf{3.} Series representations of Gaussian processes (similar
to Karhunen-Lo\'eve expansions). See e.g., \cite{MR1935652}.

\section{\label{sec:SigA}Sigma-algebras and RKHSs of signed measures}

Now our present focus will be a class of p.d. kernels, defined on
subsets of a fixed $\sigma$-algebra. Specifically, if $\left(M,\mathscr{B},\mu\right)$
is a $\sigma$-finite measure space, we set $X=\mathscr{B}_{fin}$;
see (\ref{eq:S1}) below.
\begin{defn}
Consider a measure space $\left(M,\mathscr{B},\mu\right)$ where $\mathscr{B}$
is a sigma-algebra of subsets in $M$, and $\mu$ is a $\sigma$-finite
measure on $\mathscr{B}$. Set 
\begin{equation}
\mathscr{B}_{fin}=\left\{ A\in\mathscr{B}\mid\mu\left(A\right)<\infty\right\} .\label{eq:S1}
\end{equation}
Let $\mathscr{H}$ be a Hilbert space having the following property:
\begin{equation}
\left\{ \chi_{A}\mid A\in\mathscr{B}_{fin}\right\} \subset\mathscr{H},\label{eq:S2}
\end{equation}
where $\chi_{A}$ denotes the indicator function for the set $A$.
\end{defn}

We shall restrict the discussion to real valued functions, and the
extension to the complex case is straightforward. The latter can be
found in a number of treatments, for example Peres et al. \cite{MR2552864}.
\begin{thm}
\label{thm:S2}Let $\beta$ be a function, $\mathscr{B}_{fin}\times\mathscr{B}_{fin}\longrightarrow\mathbb{R}$.
Then TFAE:
\begin{enumerate}
\item \label{enu:S1}$\beta$ is positive definite, i.e., $\forall n\in\mathbb{N}$,
$\forall\left\{ \alpha_{i}\right\} _{1}^{n}$, $\forall\left\{ A_{i}\right\} _{1}^{n}$,
$\alpha_{i}\in\mathbb{R}$, $A_{i}\in\mathscr{B}_{fin}$, we have
\begin{equation}
\sum_{1}^{n}\sum_{1}^{n}\alpha_{i}\alpha_{j}\beta\left(A_{i},A_{j}\right)\geq0.
\end{equation}
\item \label{enu:S2}There is a Hilbert space $\mathscr{H}$ which satisfies
(\ref{eq:S2}); and also 
\begin{equation}
\beta\left(A,B\right)=\left\langle \chi_{A},\chi_{B}\right\rangle _{\mathscr{H}},\quad\forall\left(A,B\right)\in\mathscr{B}_{fin}\times\mathscr{B}_{fin}.\label{eq:S4}
\end{equation}
\item \label{enu:S3}There is a Hilbert space $\mathscr{H}$ which satisfies
(\ref{eq:S2}); and also a linear mapping: 
\begin{equation}
\mathscr{H}\ni f\longmapsto\mu_{f}\in\left(\begin{matrix}\text{signed finitely additive}\\
\text{measures on \ensuremath{\left(M,\mathscr{B}\right)}}
\end{matrix}\right)\label{eq:S5}
\end{equation}
with 
\begin{equation}
\mu_{f}\left(A\right)=\left\langle \chi_{A},f\right\rangle _{\mathscr{H}},\quad\forall A\in\mathscr{B}_{fin}.\label{eq:S6}
\end{equation}
\end{enumerate}
\end{thm}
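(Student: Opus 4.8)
The plan is to treat the statement as two linked equivalences: (\ref{enu:S1})$\Leftrightarrow$(\ref{enu:S2}) is the classical correspondence between positive definite kernels and reproducing kernel Hilbert spaces recalled in the Introduction, specialized to the index set $X=\mathscr{B}_{fin}$, while (\ref{enu:S2})$\Leftrightarrow$(\ref{enu:S3}) carries the measure-theoretic content. Two of the four implications are bookkeeping. For (\ref{enu:S2})$\Rightarrow$(\ref{enu:S1}): if $\mathscr{H}$ and $\{\chi_{A}\}_{A\in\mathscr{B}_{fin}}$ satisfy (\ref{eq:S4}), then for every finite system $\alpha_{i}\in\mathbb{R}$, $A_{i}\in\mathscr{B}_{fin}$,
\[
\sum_{i,j}\alpha_{i}\alpha_{j}\,\beta(A_{i},A_{j})=\left\langle \sum_{i}\alpha_{i}\chi_{A_{i}},\,\sum_{j}\alpha_{j}\chi_{A_{j}}\right\rangle _{\mathscr{H}}=\left\Vert \sum_{i}\alpha_{i}\chi_{A_{i}}\right\Vert _{\mathscr{H}}^{2}\ge0 ,
\]
which is (\ref{enu:S1}). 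And (\ref{enu:S3})$\Rightarrow$(\ref{enu:S2}) costs nothing: the Hilbert space and the vectors $\chi_{A}$ furnished by (\ref{enu:S3}) are already the data requested in (\ref{enu:S2}), and (\ref{eq:S4}) is read off by setting $f=\chi_{B}$ in (\ref{eq:S6}) and identifying the resulting kernel $(A,B)\mapsto\langle\chi_{A},\chi_{B}\rangle_{\mathscr{H}}$ --- symmetric since $\langle\chi_{A},\chi_{B}\rangle=\langle\chi_{B},\chi_{A}\rangle$ --- with $\beta$.

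For (\ref{enu:S1})$\Rightarrow$(\ref{enu:S2}) I would run the construction recalled in the Introduction verbatim with $X=\mathscr{B}_{fin}$: let $\mathscr{H}=\mathscr{H}(\beta)$ be the completion of the span of $\{\beta(A,\cdot):A\in\mathscr{B}_{fin}\}$ in the norm $\left\Vert \sum_{i}\alpha_{i}\beta(A_{i},\cdot)\right\Vert ^{2}=\sum_{i,j}\alpha_{i}\alpha_{j}\beta(A_{i},A_{j})$, passed to the quotient by the null vectors, and put $\chi_{A}:=\beta(A,\cdot)$. By \lemref{I1} the elements of $\mathscr{H}(\beta)$ are honest functions on $\mathscr{B}_{fin}$, the reproducing identity $\langle f,\chi_{A}\rangle_{\mathscr{H}(\beta)}=f(A)$ holds, evaluating it at $f=\chi_{B}$ gives (\ref{eq:S4}), and (\ref{eq:S2}) holds since each $\chi_{A}$ is by construction a vector of $\mathscr{H}(\beta)$.

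The substantive implication is (\ref{enu:S2})$\Rightarrow$(\ref{enu:S3}). Given $\mathscr{H}$ as in (\ref{enu:S2}), define for each $f\in\mathscr{H}$ the set function $\mu_{f}(A):=\langle\chi_{A},f\rangle_{\mathscr{H}}$, $A\in\mathscr{B}_{fin}$, so that (\ref{eq:S6}) holds; linearity of $f\mapsto\mu_{f}$ is clear and $\mu_{f}(\emptyset)=\langle\chi_{\emptyset},f\rangle$. The one point that requires an argument is finite additivity. Since $\mu$ is $\sigma$-finite, $\mathscr{B}_{fin}$ is a ring of sets, so for disjoint $A,B\in\mathscr{B}_{fin}$ we have $A\cup B\in\mathscr{B}_{fin}$, and the desired $\mu_{f}(A\cup B)=\mu_{f}(A)+\mu_{f}(B)$ is exactly the vector identity $\chi_{A\cup B}=\chi_{A}+\chi_{B}$ in $\mathscr{H}$ --- which holds when, as is natural, the $\chi_{A}$ are taken to be the indicator functions of $M$ sitting inside $\mathscr{H}$, and which by (\ref{eq:S4}) amounts to additivity of $\beta$ over disjoint unions. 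Granting it, $\mu_{f}$ is a real, finitely additive set function on the ring $\mathscr{B}_{fin}$ vanishing at $\emptyset$; writing $M=\bigcup_{n}M_{n}$ with $M_{n}\in\mathscr{B}_{fin}$ and $M_{n}\uparrow M$, one extends $\mu_{f}$ to all of $\mathscr{B}$ by $\mu_{f}(A):=\lim_{n}\mu_{f}(A\cap M_{n})$ wherever the limit exists and verifies additivity of the extension, obtaining the signed finitely additive measure of (\ref{eq:S5})--(\ref{eq:S6}).

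I expect the main obstacle to be precisely this last passage, together with the honest accounting of what one really gets: the set function $A\mapsto\langle\chi_{A},f\rangle$ is only \emph{finitely} additive; its $\sigma$-additivity --- equivalently, its promotion to a genuine countably additive signed measure on $\mathscr{B}$ --- fails in general, and this is exactly the phenomenon the abstract isolates for a separate necessary-and-sufficient criterion. A secondary point to make precise is the sense in which the Hilbert factorization $\mathscr{H}$ respects the Boolean operations on $\mathscr{B}_{fin}$ (so that $\chi_{A\cup B}=\chi_{A}+\chi_{B}$ for disjoint $A,B$): transparent when $\mathscr{H}$ is modeled as a space of functions on $M$, but worth recording explicitly in the abstract Aronszajn model. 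Everything past these two items is routine.
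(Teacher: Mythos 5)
Your proposal follows essentially the same route as the paper's proof: the Aronszajn/RKHS construction over the index set $X=\mathscr{B}_{fin}$ for (\ref{enu:S1})$\Rightarrow$(\ref{enu:S2}), the definition $\mu_{f}(A)=\langle\chi_{A},f\rangle_{\mathscr{H}}$ together with the vector identity $\chi_{A\cup B}=\chi_{A}+\chi_{B}$ (disjoint $A,B$) for finite additivity in (\ref{enu:S2})$\Rightarrow$(\ref{enu:S3}), and the immediate converse directions, with only the arrangement differing (two linked equivalences instead of the cycle (\ref{enu:S1})$\Rightarrow$(\ref{enu:S2})$\Rightarrow$(\ref{enu:S3})$\Rightarrow$(\ref{enu:S1})). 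The two caveats you flag --- that only finite, not $\sigma$-, additivity is obtained (the paper defers that to Theorem \ref{thm:T1}) and that the abstract model's vectors $\chi_{A}$ must respect the Boolean relations for the additivity step --- are precisely the points the paper also leaves implicit; your exhaustion-limit extension of $\mu_{f}$ from $\mathscr{B}_{fin}$ to all of $\mathscr{B}$ is shakier than needed (the limit need not exist), but the paper itself dismisses that extension as clear, so this does not separate your argument from theirs.
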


\begin{proof}
We shall divide up the reasoning in the implications: (\ref{enu:S1})
$\Rightarrow$ (\ref{enu:S2}) $\Rightarrow$ (\ref{enu:S3}) $\Rightarrow$
(\ref{enu:S1}). The characterization in (\ref{eq:S5}) of the elements
in the RKHS $\mathscr{H}$ is based on an application of \lemref{I1},
combined with the detailed reasoning below.

\textbf{Case (\ref{enu:S1}) $\Rightarrow$ (\ref{enu:S2}).} Given
a function $\beta$ as in (\ref{enu:S1}), we know that, by \cite{MR0051437},
there is an associated reproducing kernel Hilbert space $\mathscr{H}\left(\beta\right)$.
The vectors in $\mathscr{H}\left(\beta\right)$ are obtained by the
quotient and completion procedures applied to the functions 
\begin{equation}
\mathscr{B}_{fin}\ni B\longmapsto\beta\left(A,B\right)\in\mathbb{R}\label{eq:S7}
\end{equation}
defined for every $A\in\mathscr{B}_{fin}$. Moreover, the inner product
in $\mathscr{H}\left(\beta\right)$, satisfies 
\begin{equation}
\left\langle \beta\left(A_{1},\cdot\right),\beta\left(A_{2},\cdot\right)\right\rangle _{\mathscr{H}\left(\beta\right)}=\beta\left(A_{1},A_{2}\right).
\end{equation}

Now let 
\begin{equation}
\mathscr{H}=\big(span\left\{ \chi_{A}\mid A\in\mathscr{B}_{fin}\right\} \big)^{\sim}
\end{equation}
with $\left(\cdots\right)^{\sim}$ denoting the Hilbert completion:
\begin{eqnarray*}
\left\Vert \sum\nolimits _{i}\alpha_{i}\chi_{A_{i}}\right\Vert _{\mathscr{H}}^{2} & = & \sum\nolimits _{i}\sum\nolimits _{j}\alpha_{i}\alpha_{j}\beta\left(A_{i},A_{j}\right)\\
 & \underset{\text{see \ensuremath{\left(\ref{eq:S7}\right)}}}{=} & \left\Vert \sum\nolimits _{i=1}^{n}\alpha_{i}\beta\left(A_{i},\cdot\right)\right\Vert _{\mathscr{H}\left(\beta\right)}^{2}.
\end{eqnarray*}
It is then immediate from this that the Hilbert space $\mathscr{H}$
satisfies the conditions stated in (\ref{enu:S2}) of the theorem. 

\textbf{Case (\ref{enu:S2}) $\Rightarrow$ (\ref{enu:S3}).} Let
$\mathscr{H}$ satisfy the conditions in (\ref{enu:S2}); and for
$f\in\mathscr{H}$, let $\mu_{f}$ be as in (\ref{eq:S5}). We must
show that if $n\in\mathbb{N}$, $\left\{ A_{i}\right\} _{1}^{n}$,
$A_{i}\in\mathscr{B}_{fin}$, satisfy $A_{i}\cap A_{j}=\emptyset$,
$i\neq j$, then 
\begin{equation}
\mu_{f}\left(\cup_{1}^{n}A_{i}\right)=\sum\nolimits _{1}^{n}\mu_{f}\left(A_{i}\right).\label{eq:S10}
\end{equation}
But 
\begin{eqnarray*}
\text{LHS}_{\left(\ref{eq:S10}\right)} & \underset{\text{by \ensuremath{\left(\ref{eq:S6}\right)}}}{=} & \left\langle \chi_{\cup_{i=1}^{n}A_{i}},f\right\rangle _{\mathscr{H}}\\
 & = & \sum_{i}\left\langle \chi_{A_{i}},f\right\rangle _{\mathscr{H}}=\sum_{i}\mu_{f}\left(A_{i}\right)=\text{RHS}_{\left(\ref{eq:S10}\right)}.
\end{eqnarray*}
The remaining assertions in (\ref{enu:S3}) are clear. 

\textbf{Case (\ref{enu:S3}) $\Rightarrow$ (\ref{enu:S1}).} This
step is immediate from (\ref{eq:S4}).
\end{proof}
\begin{prop}
Let $\left(M,\mathscr{B},\mu\right)$ be a $\sigma$-finite measure
space. As in \thmref{S2}, we specify a pair $\left(\beta,\mathscr{H}\right)$
where $\beta$ is defined on $\mathscr{B}_{fin}\times B_{fin}$, and
$\mathscr{H}$ is a Hilbert space subject to condition (\ref{eq:S2}).
For $f\in\mathscr{H}$, set 
\begin{equation}
\mu_{f}\left(A\right)=\left\langle \chi_{A},f\right\rangle _{\mathscr{H}},\quad A\in\mathscr{B}_{fin}.
\end{equation}
Then $\mu_{f}\in\mathscr{H}\left(\beta\right)\left(=\text{the RKHS of \ensuremath{\beta}.}\right)$
Moreover, 
\begin{equation}
\left\Vert \mu_{f}\right\Vert _{\mathscr{H}\left(\beta\right)}\leq\left\Vert f\right\Vert _{\mathscr{H}}.\label{eq:I12}
\end{equation}
\end{prop}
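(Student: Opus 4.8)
The plan is to invoke \lemref{I1} with the positive definite kernel $K=\beta$ on the index set $X=\mathscr{B}_{fin}$, regarding $A\longmapsto\mu_{f}\left(A\right)$ as a real-valued function on $X$ and producing the \emph{a priori} estimate (\ref{eq:I6}) with the explicit constant $C_{\mu_{f}}=\left\Vert f\right\Vert _{\mathscr{H}}^{2}$. Note at the outset that, although $\mu_{f}$ appears in \thmref{S2}\,(\ref{enu:S3}) in its guise as a finitely additive signed measure, its restriction to $\mathscr{B}_{fin}$ is precisely a function on $X$, and it is this function that we are placing in $\mathscr{H}\left(\beta\right)$.

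Fix $f\in\mathscr{H}$, and let $n\in\mathbb{N}$, $\left\{ A_{i}\right\} _{1}^{n}\subset\mathscr{B}_{fin}$, $\left\{ \alpha_{i}\right\} _{1}^{n}\subset\mathbb{R}$ be arbitrary. By the defining formula $\mu_{f}\left(A\right)=\left\langle \chi_{A},f\right\rangle _{\mathscr{H}}$ and linearity in the first variable,
\[
\sum\nolimits _{i=1}^{n}\alpha_{i}\mu_{f}\left(A_{i}\right)=\left\langle \sum\nolimits _{i=1}^{n}\alpha_{i}\chi_{A_{i}},\,f\right\rangle _{\mathscr{H}}.
\]
Cauchy--Schwarz in $\mathscr{H}$, followed by (\ref{eq:S4}) in the form $\left\langle \chi_{A_{i}},\chi_{A_{j}}\right\rangle _{\mathscr{H}}=\beta\left(A_{i},A_{j}\right)$, gives
\[
\left|\sum\nolimits _{i=1}^{n}\alpha_{i}\mu_{f}\left(A_{i}\right)\right|^{2}\leq\left\Vert f\right\Vert _{\mathscr{H}}^{2}\left\Vert \sum\nolimits _{i=1}^{n}\alpha_{i}\chi_{A_{i}}\right\Vert _{\mathscr{H}}^{2}=\left\Vert f\right\Vert _{\mathscr{H}}^{2}\sum\nolimits _{i=1}^{n}\sum\nolimits _{j=1}^{n}\alpha_{i}\alpha_{j}\beta\left(A_{i},A_{j}\right).
\]
This is exactly (\ref{eq:I6}) with $C=\left\Vert f\right\Vert _{\mathscr{H}}^{2}<\infty$, so \lemref{I1} gives $\mu_{f}\in\mathscr{H}\left(\beta\right)$; moreover the reproducing property in $\mathscr{H}\left(\beta\right)$ then reads $\left\langle \mu_{f},\beta\left(A,\cdot\right)\right\rangle _{\mathscr{H}\left(\beta\right)}=\mu_{f}\left(A\right)$, consistent with (\ref{eq:S6}).

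For the norm bound (\ref{eq:I12}) I will use the sharp version of \lemref{I1}: the least admissible constant $C$ in (\ref{eq:I6}) equals $\left\Vert \mu_{f}\right\Vert _{\mathscr{H}\left(\beta\right)}^{2}$. This is already visible inside the proof of \lemref{I1} --- the functional $T_{\mu_{f}}$ on the dense span of $\left\{ \beta\left(A,\cdot\right):A\in\mathscr{B}_{fin}\right\} $ has operator norm $\leq\sqrt{C}$, and its Riesz vector is $\mu_{f}$, so $\left\Vert \mu_{f}\right\Vert _{\mathscr{H}\left(\beta\right)}^{2}\leq C$; inserting $C=\left\Vert f\right\Vert _{\mathscr{H}}^{2}$ yields (\ref{eq:I12}). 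Equivalently, and perhaps more transparently: by (\ref{eq:S4}) the assignment $\beta\left(A,\cdot\right)\longmapsto\chi_{A}$ extends to an isometry $J:\mathscr{H}\left(\beta\right)\hookrightarrow\mathscr{H}$, and the reproducing property identifies $f\longmapsto\mu_{f}$ with the adjoint $J^{*}:\mathscr{H}\rightarrow\mathscr{H}\left(\beta\right)$, which is a contraction; this packaging also explains why equality in (\ref{eq:I12}) fails in general (namely, $J^{*}$ annihilates the component of $f$ orthogonal to the closed span of $\left\{ \chi_{A}\right\}$).

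There is no genuine obstacle here; what requires care is only the bookkeeping flagged above --- keeping straight that $\mu_{f}$ is simultaneously a finitely additive signed measure and the function $A\longmapsto\mu_{f}\left(A\right)\in\mathscr{H}\left(\beta\right)$ --- together with the standard fact, not explicitly isolated earlier, that the optimal constant in \lemref{I1} is the squared RKHS norm.
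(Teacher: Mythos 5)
Your proof is correct and follows essentially the same route as the paper: apply \lemref{I1} with $X=\mathscr{B}_{fin}$, obtain the estimate (\ref{eq:I6}) via Cauchy--Schwarz and (\ref{eq:S4}) with the explicit constant $C_{f}=\left\Vert f\right\Vert _{\mathscr{H}}^{2}$, and conclude both membership in $\mathscr{H}\left(\beta\right)$ and the bound (\ref{eq:I12}). Your extra observation that the norm bound rests on the optimal constant in \lemref{I1} being (at least an upper bound for) $\left\Vert \mu_{f}\right\Vert _{\mathscr{H}\left(\beta\right)}^{2}$ --- visible from the Riesz vector in the proof of the lemma, or equivalently via the contraction $J^{*}$ --- simply makes explicit a step the paper asserts without comment.
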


\begin{proof}
This will be a direct application of \lemref{I1}, but now applied
to $X=\mathscr{B}_{fin}$. Hence we must show that, $\forall n\in\mathbb{N}$,
$\left\{ A_{i}\right\} _{1}^{n}$, $\left\{ \alpha_{i}\right\} _{1}^{n}$,
$A_{i}\in\mathscr{B}_{fin}$, $\alpha_{i}\in\mathbb{R}$, the estimate
(\ref{eq:I6}) holds, and with a finite constant $C_{f}$.

In fact, we may take $C_{f}=\left\Vert f\right\Vert _{\mathscr{H}}^{2}$,
so $\left\Vert \mu_{f}\right\Vert _{\mathscr{H}\left(\beta\right)}\leq\left\Vert f\right\Vert _{\mathscr{H}}$
as claimed. Specifically, 
\begin{eqnarray*}
\left|\sum\nolimits _{i=1}^{n}\alpha_{i}\mu_{f}\left(A_{i}\right)\right|^{2} & = & \left|\sum\nolimits _{i=1}^{n}\alpha_{i}\left\langle \chi_{A_{i}},f\right\rangle _{\mathscr{H}}\right|^{2}\\
 & = & \left|\left\langle \sum\nolimits _{i=1}^{n}\alpha_{i}\chi_{A_{i}},f\right\rangle _{\mathscr{H}}\right|^{2}\\
 & \underset{{\scriptscriptstyle \text{by Schwarz}}}{\leq} & \left\Vert \sum\nolimits _{i=1}^{n}\alpha_{i}\chi_{A_{i}}\right\Vert _{\mathscr{H}}^{2}\left\Vert f\right\Vert _{\mathscr{H}}^{2}\\
 & \underset{{\scriptscriptstyle \text{by \ensuremath{\left(\ref{eq:S4}\right)}}}}{=} & \left\Vert f\right\Vert _{\mathscr{H}}^{2}\sum\nolimits _{i}\sum\nolimits _{j}\alpha_{i}\alpha_{j}\beta\left(A_{i},A_{j}\right),
\end{eqnarray*}
which is the desired conclusion. 
\end{proof}
\begin{rem}
Our present focus is on the case when the prescribed $\sigma$-finite
measure $\mu$ is non-atomic. But the atomic case is also important,
for example in interpolation theory in the form of Shannon, see e.g.,
\cite{MR0442564}. 

Consider, for example, the case $X=\mathbb{R}$, and 
\begin{equation}
K\left(x,y\right)=\frac{\sin\pi\left(x-y\right)}{\pi\left(x-y\right)},\label{eq:I7}
\end{equation}
defined for $\left(x,y\right)\in\mathbb{R}\times\mathbb{R}$. In this
case, the RKHS $\mathscr{H}\left(K\right)$ is familiar: It may be
realized as functions $f$ on $\mathbb{R}$, such that the Fourier
transform 
\begin{equation}
\hat{f}\left(\xi\right)=\int_{\mathbb{R}}e^{-i2\pi x\xi}f\left(x\right)dx\label{eq:I8}
\end{equation}
is well defined, \emph{and} supported in the compact interval $\left[-\frac{1}{2},\frac{1}{2}\right]$,
frequency band, with $\left\Vert f\right\Vert _{\mathscr{H}\left(K\right)}^{2}=\int_{-\frac{1}{2}}^{\frac{1}{2}}|\hat{f}\left(\xi\right)|^{2}d\xi.$ 

Set $\mu=\sum_{n\in\mathbb{Z}}\delta_{n}$ (the Dirac-comb). Then
Shannon's theorem states that 
\begin{equation}
l^{2}\left(\mathbb{Z}\right)\ni\left(\alpha_{n}\right)_{n\in\mathbb{Z}}\xrightarrow{\quad T\quad}\mathscr{H}\left(K\right),
\end{equation}
given by 
\begin{equation}
\big(T\left(\left(\alpha_{n}\right)\big)\right)\left(x\right)=\sum_{n\in\mathbb{Z}}\alpha_{n}\,\frac{\sin\pi\left(x-n\right)}{\pi\left(x-n\right)}
\end{equation}
is isometric, mapping $l^{2}$ onto $\mathscr{H}\left(K\right)$.
Its adjoint operator
\[
T^{*}:\mathscr{H}\left(K\right)\longrightarrow l^{2}\left(\mathbb{Z}\right)
\]
is 
\begin{equation}
\left(T^{*}f\right)_{n}=f\left(n\right),\quad n\in\mathbb{Z}.\label{eq:I11}
\end{equation}
Compare (\ref{eq:I11}) with (\ref{eq:T6}) below in a much wider
context. 

The RKHS for the kernel (\ref{eq:I7}) $\mathscr{H}\left(K\right)$
is called the Paley-Wiener space. Functions in $\mathscr{H}\left(K\right)$
also go by the name, band-limited signals. We refer to (\ref{eq:I11})
as (Shannon) sampling. It states that functions (continuous time-signals)
$f$ from $\mathscr{H}\left(K\right)$ may be reconstructed \textquotedblleft perfectly\textquotedblright{}
from their discrete $\mathbb{Z}$ samples. 
\end{rem}

\section{\label{sec:SA}The sigma-additive property}

The sigma-additive property alluded to here is not a minor technical
point. Indeed, one of the basic problems related to the propositional
calculus and the foundations of quantum mechanics is the description
of probability measures (called states in quantum physical terminology)
on the set of experimentally verifiable propositions. In the quantum
setting, the set of propositions is then realized as an orthomodular
partially ordered set, where the order is induced by a relation of
implication, called a quantum logic. Now quantum-observables are generally
non-commuting, and the precise question is in fact formulated for
states (measures) on $C^{*}$-algebras; i.e., normalized positive
linear functionals (see e.g., \cite{MR3642406}).

The classical Gleason theorem (see \cite{MR0096113}) is the assertion
that a state on the $C^{*}$-algebra $\mathscr{B}\left(\mathscr{H}\right)$
of all bounded operators on a Hilbert space is uniquely described
by the values it takes on orthogonal projections, assuming the dimension
of the Hilbert space $\mathscr{H}$ is not 2. The precise result entails
extension of finitely additive measures to sigma-additive counterparts,
i.e., when we have additivity on countable unions of disjoint sets
from the underlying sigma-algebra.

We now turn to the question of when the finitely additive measures
$\mu_{f}$ are in fact $\sigma$-additive. (See \thmref{S2}, part
(\ref{enu:S3}).)

Given $\left(M,\mathscr{B},\mu\right)$ as above, we shall set 
\begin{equation}
\mathscr{D}_{fin}\left(\mu\right)=span\left\{ \chi_{A}\mid A\in\mathscr{B}_{fin}\right\} .
\end{equation}
Recall that $\mathscr{D}_{fin}\left(\mu\right)$ is automatically
a dense subspace in $L^{2}\left(\mu\right)$. 
\begin{thm}
\label{thm:T1}Let $\mathscr{B}_{fin}$ be as specified in (\ref{eq:S1})
with a fixed $\sigma$-finite measure space $\left(M,\mathscr{B},\mu\right)$.
Let $\beta$ be given, assumed positive definite on $\mathscr{B}_{fin}\times\mathscr{B}_{fin}$,
and let $\mathscr{H}$ be a Hilbert space which satisfies conditions
(\ref{eq:S2}) and (\ref{eq:S4}). 

Then there is a dense subspace $\mathscr{H}_{\mu}\subset\mathscr{H}$
such that the signed measures
\begin{equation}
\left\{ \mu_{f}\mid f\in\mathscr{H}_{\mu}\right\} 
\end{equation}
are $\sigma$-additive if and only if the following implication holds:
\begin{alignat*}{1}
\left.\begin{matrix}\left(\alpha\right) &  & \left\{ \varphi_{n}\right\} _{n\in\mathbb{N}},\:\varphi_{n}\in\mathscr{D}_{fin}\left(\mu\right),\:\left\Vert \varphi_{n}\right\Vert _{L^{2}\left(\mu\right)}\xrightarrow[\;n\rightarrow\infty\;]{}0\\
\left(\beta\right) &  & f\in\mathscr{H},\;\left\Vert \varphi_{n}-f\right\Vert _{\mathscr{H}}\xrightarrow[\;n\rightarrow\infty\;]{}0
\end{matrix}\right\} \Longrightarrow & f=0,
\end{alignat*}
i.e., if a vector $f\in\mathscr{H}$ satisfies $\left(\alpha\right)$
and $\left(\beta\right)$, it must be the null vector in $\mathscr{H}$. 
\end{thm}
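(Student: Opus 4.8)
The plan is to read the displayed implication as a closability statement for a natural densely‑defined operator, and then to run the standard von Neumann duality between closability of an operator and density of the domain of its adjoint.

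First I would set up that operator. Since $\mathscr{D}_{fin}(\mu)$ is dense in $L^{2}(\mu)$ and, by (\ref{eq:S2}), every $\chi_{A}$ — hence every $\varphi\in\mathscr{D}_{fin}(\mu)$ — is simultaneously a vector of $\mathscr{H}$, define
\[
J:\operatorname{dom}(J)=\mathscr{D}_{fin}(\mu)\subseteq L^{2}(\mu)\longrightarrow\mathscr{H},\qquad J\varphi=\varphi\in\mathscr{H},
\]
the ``identity on simple functions.'' (If this fails to be single‑valued, i.e. if some $\mu$‑null $N$ has $\chi_{N}\neq 0$ in $\mathscr{H}$, then the implication in the theorem is already violated by $\varphi_{n}\equiv\chi_{N}$, so I may pass to the quotient on which $J$ is well defined, or, more robustly, carry the closure of the graph of $J$ as a linear relation.) The identity I use throughout is $\langle J\varphi,g\rangle_{\mathscr{H}}=\int\varphi\,d\mu_{g}$ for $\varphi\in\mathscr{D}_{fin}(\mu)$ and $g\in\mathscr{H}$, which is immediate from (\ref{eq:S6}) and linearity. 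Conditions $(\alpha)$–$(\beta)$ say exactly that there is a sequence $\varphi_{n}\in\operatorname{dom}(J)$ with $\varphi_{n}\to 0$ in $L^{2}(\mu)$ and $J\varphi_{n}\to f$ in $\mathscr{H}$; hence the asserted implication ``$(\alpha)\wedge(\beta)\Rightarrow f=0$'' is precisely the statement that $J$ is closable.

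Next I would identify $J^{*}$. From the identity above, $g\in\operatorname{dom}(J^{*})$ iff there is $h\in L^{2}(\mu)$ with $\int\varphi\,d\mu_{g}=\langle\varphi,h\rangle_{L^{2}(\mu)}$ for all $\varphi\in\mathscr{D}_{fin}(\mu)$; taking $\varphi=\chi_{A}$ this means $\mu_{g}(A)=\int_{A}h\,d\mu$ for every $A\in\mathscr{B}_{fin}$, i.e. $\mu_{g}\ll\mu$ with $d\mu_{g}/d\mu\in L^{2}(\mu)$. Thus $\operatorname{dom}(J^{*})=\{\,g\in\mathscr{H}:\mu_{g}\text{ has an }L^{2}(\mu)\text{ density}\,\}$, and every such $\mu_{g}$ is in particular $\sigma$‑additive. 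Now invoke: a densely‑defined operator is closable iff its adjoint is densely defined. For ``$\Leftarrow$'': if the displayed implication holds then $J$ is closable, so $\operatorname{dom}(J^{*})$ is dense; I take $\mathscr{H}_{\mu}=\operatorname{dom}(J^{*})$ and each $\mu_{f}$, $f\in\mathscr{H}_{\mu}$, is $\sigma$‑additive. For ``$\Rightarrow$'': given a dense $\mathscr{H}_{\mu}$ on which all $\mu_{f}$ are $\sigma$‑additive, take $\varphi_{n}$, $f$ as in $(\alpha)$–$(\beta)$ and test against $g\in\mathscr{H}_{\mu}$: $\langle f,g\rangle_{\mathscr{H}}=\lim_{n}\langle J\varphi_{n},g\rangle_{\mathscr{H}}=\lim_{n}\int\varphi_{n}\,d\mu_{g}$; if this limit is $0$ for $g$ in a dense subset of $\mathscr{H}_{\mu}$, then $f\perp\mathscr{H}_{\mu}$, hence $f=0$.

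The step I expect to be the main obstacle is the very last one: passing from ``$\varphi_{n}\to 0$ in $L^{2}(\mu)$ and $\mu_{g}$ $\sigma$‑additive'' to ``$\int\varphi_{n}\,d\mu_{g}\to 0$.'' When $\mu_{g}$ carries an $L^{2}(\mu)$ density this is one line of Cauchy–Schwarz and the whole argument collapses to the closability/adjoint computation above; I expect the proof either takes $\mathscr{H}_{\mu}$ to be precisely the canonical subspace $\operatorname{dom}(J^{*})$, or else uses the standing hypotheses (non‑atomicity of $\mu$, together with the $\mathscr{H}$‑boundedness of $\{\varphi_{n}\}$ forced by $(\beta)$) to upgrade bare $\sigma$‑additivity to the required estimate through a uniform‑integrability/truncation argument. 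Everything else — the reformulation as closability of $J$, the computation of $\operatorname{dom}(J^{*})$, and the duality — is routine.
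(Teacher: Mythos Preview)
Your approach is essentially identical to the paper's: both introduce the inclusion operator $T$ (your $J$) from $L^{2}(\mu)$ into $\mathscr{H}$ with domain $\mathscr{D}_{fin}(\mu)$, recognize the displayed implication as precisely the closability of $T$, invoke the von Neumann duality ``$T$ closable $\Leftrightarrow$ $\operatorname{dom}(T^{*})$ dense'', and identify $\operatorname{dom}(T^{*})$ with those $f$ for which $\mu_{f}(A)=\int_{A}(T^{*}f)\,d\mu$ with $T^{*}f\in L^{2}(\mu)$. Your guess is exactly right that the paper takes $\mathscr{H}_{\mu}=\operatorname{dom}(T^{*})$; for the reverse implication the paper says only that it ``follows from general facts about $L^{2}(M,\mathscr{B},\mu)$'', so your explicit flagging of the step $\int\varphi_{n}\,d\mu_{g}\to 0$ is in fact more careful than the paper's own treatment.
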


\begin{proof}
Note that, because of assumptions (\ref{eq:S2}) and (\ref{eq:S4}),
we get a natural inclusion mapping, denoted $T$, 
\begin{equation}
L^{2}\left(\mu\right)\xrightarrow{\quad T\quad}\mathscr{H}\label{eq:T3}
\end{equation}
with dense domain $\mathscr{D}_{fin}\left(\mu\right)$ in $L^{2}\left(\mu\right)$.
Recall, if $A\in\mathscr{B}_{fin}$, then the indicator function $\chi_{A}$
is assumed to be in $\mathscr{H}$.

With these assumptions, we see that the implication in the statement
of the theorem simply states that $T$ is closable when viewed as
a densely defined operator as in (\ref{eq:T3}).

By a general theorem (see e.g., \cite{MR3642406}), $T$ is closable
if and only if the domain $dom\left(T^{*}\right)$ of its adjoint
$T^{*}$ is dense in $\mathscr{H}$. 

We have that a vector $f$ in $\mathscr{H}$ is in $dom\left(T^{*}\right)$
if and only if  $\exists C_{f}<\infty$ such that 
\begin{equation}
\left|\left\langle T\varphi,f\right\rangle _{\mathscr{H}}\right|\leq C_{f}\left\Vert \varphi\right\Vert _{L^{2}\left(\mu\right)}
\end{equation}
holds for all $\varphi\in\mathscr{D}_{fin}\left(\mu\right)$. Also
note that, if $\varphi=\chi_{A}$, $A\in\mathscr{B}_{fin}$, then
\begin{equation}
\left\langle T\varphi,f\right\rangle _{\mathscr{H}}=\mu_{f}\left(A\right);
\end{equation}
and so if $f\in dom\left(T^{*}\right)$, then 
\begin{align}
\mu_{f}\left(A\right) & =\big\langle\chi_{A},\underset{{\scriptscriptstyle \in L^{2}\left(\mu\right)}}{\underbrace{T^{*}f}}\big\rangle_{L^{2}\left(\mu\right)}=\int_{A}\left(T^{*}f\right)d\mu,\quad\forall A\in\mathscr{B}_{fin}.\label{eq:T6}
\end{align}
Note, by definition, $T^{*}f\in L^{2}\left(\mu\right)$. Indeed, the
converse holds as well. Since the right-hand side in (\ref{eq:T6})
is clearly $\sigma$-additive, one implication holds. Moreover, the
other implication follows from general facts about $L^{2}\left(M,\mathscr{B},\mu\right)$
valid for any $\sigma$-finite measure $\mu$ on $\left(M,\mathscr{B}\right)$. 
\end{proof}

\begin{cor}
\label{cor:T2}Let $\left(\beta,\mathscr{H}\right)$ be as in the
statement of \thmref{T1}, and let $T$ be the closable inclusion
$L^{2}\left(\mu\right)\xrightarrow{\;T\;}\mathscr{H}$. Then for $f\in dom\left(T^{*}\right)$,
dense in $\mathscr{H}$, the corresponding signed measure $\mu_{f}$
is absolutely continuous w.r.t. $\mu$ with Radon-Nikodym derivative
\begin{equation}
\frac{d\mu_{f}}{d\mu}=T^{*}f.\label{eq:T7}
\end{equation}
\end{cor}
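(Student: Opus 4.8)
The plan is to build directly on the identity \eqref{eq:T6} obtained in the proof of \thmref{T1}. That identity already does the essential work: for $f\in dom(T^{*})$ it asserts $\mu_{f}(A)=\int_{A}(T^{*}f)\,d\mu$ for every $A\in\mathscr{B}_{fin}$. Thus, on sets of finite $\mu$-measure, the corollary is just a restatement of what \thmref{T1} establishes, and the only genuine tasks left are (i) to justify the clause ``dense in $\mathscr{H}$'', (ii) to promote the integral representation from $\mathscr{B}_{fin}$ to all of $\mathscr{B}$, and (iii) to read off absolute continuity together with the Radon-Nikodym derivative.

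For (i): since $T$ is closable by hypothesis, the general operator-theoretic fact invoked in the proof of \thmref{T1} (closability $\Longleftrightarrow$ $dom(T^{*})$ dense) gives that $dom(T^{*})$ is dense in $\mathscr{H}$; moreover, by \thmref{T1} itself, for $f\in dom(T^{*})$ (the dense subspace $\mathscr{H}_{\mu}$ there) the finitely additive set function $\mu_{f}$ is genuinely $\sigma$-additive, hence a signed measure on $(M,\mathscr{B})$.

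For (ii): I would use $\sigma$-finiteness of $\mu$. Fix an increasing sequence $M_{n}\in\mathscr{B}_{fin}$ with $M_{n}\uparrow M$, and set $M_{0}=\emptyset$. Given $A\in\mathscr{B}$, write $A$ as the disjoint union of the sets $A\cap(M_{n}\setminus M_{n-1})\in\mathscr{B}_{fin}$, $n\in\mathbb{N}$. Applying $\sigma$-additivity of $\mu_{f}$ and then \eqref{eq:T6} to each piece yields $\mu_{f}(A)=\sum_{n}\int_{A\cap(M_{n}\setminus M_{n-1})}(T^{*}f)\,d\mu=\int_{A}(T^{*}f)\,d\mu$; the last equality is countable additivity of the integral, legitimate because $T^{*}f\in L^{2}(\mu)$ is integrable over every set of finite $\mu$-measure (Cauchy-Schwarz), and the partial sums already converge to $\mu_{f}(A)$. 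Equivalently, one may observe that $\mu_{f}$ and $\nu(\,\cdot\,):=\int_{(\,\cdot\,)}(T^{*}f)\,d\mu$ are two $\sigma$-finite signed measures on $\mathscr{B}$ agreeing on the generating ring $\mathscr{B}_{fin}$, hence coincide by uniqueness of extension.

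For (iii): the representation $\mu_{f}(A)=\int_{A}(T^{*}f)\,d\mu$ immediately gives $\mu(A)=0\Rightarrow\mu_{f}(A)=0$, i.e.\ $\mu_{f}\ll\mu$, and it exhibits $T^{*}f$ as a density, so $d\mu_{f}/d\mu=T^{*}f$ (the equality holding $\mu$-a.e., which is all a Radon-Nikodym derivative is). I do not expect a substantive obstacle: the only place demanding a little care is the $\sigma$-finite bookkeeping in step (ii), and in particular the fact that $T^{*}f$, a priori merely in $L^{2}(\mu)$, need not be globally $\mu$-integrable when $\mu(M)=\infty$ --- but it is integrable on each $M_{n}$, which suffices.
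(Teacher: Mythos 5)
Your argument is correct and follows essentially the same route as the paper, which states the corollary as an immediate consequence of the identity \eqref{eq:T6} established in the proof of \thmref{T1} (together with the closability $\Leftrightarrow$ density of $dom(T^{*})$ fact already invoked there). Your steps (ii)--(iii) merely supply the routine $\sigma$-finite bookkeeping and the reading-off of absolute continuity that the paper leaves implicit.
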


\begin{example}
Let $\left(M,\mathscr{B},\mu\right)$ be a $\sigma$-finite measure
space, and on $\mathscr{B}_{fin}\times\mathscr{B}_{fin}$ define 
\begin{equation}
\beta_{\mu}\left(A,B\right):=\mu\left(A\cap B\right),\quad\forall A,B\in\mathscr{B}_{fin}.\label{eq:T8}
\end{equation}
Let $\mathscr{H}\left(\beta_{\mu}\right)=\text{RKHS}(\beta_{\mu})$,
i.e., the reproducing kernel Hilbert space associated with the p.d.
function $\beta_{\mu}$. Then $\mathscr{H}\left(\beta_{\mu}\right)$
consists of all signed measures $m$ of the form 
\begin{equation}
m\left(A\right)=\int_{A}\varphi\,d\mu,\quad\varphi\in L^{2}\left(\mu\right);\label{eq:T9}
\end{equation}
and when (\ref{eq:T9}) holds, 
\begin{equation}
\left\Vert m\right\Vert _{\mathscr{H}\left(\beta_{\mu}\right)}^{2}=\int_{M}\left|\varphi\right|^{2}d\mu.
\end{equation}
\end{example}

\begin{proof}
When $\beta_{\mu}$ is specified as in (\ref{eq:T8}), then one checks
immediately that the inclusion operator $T:L^{2}\left(\mu\right)\longrightarrow\mathscr{H}\left(\beta_{\mu}\right)$
is isometric, and maps onto $\mathscr{H}\left(\beta_{\mu}\right)$.
Indeed, for finite linear combinations $\sum_{i=1}^{n}\alpha_{i}\chi_{A_{i}}$
as above, we have 
\begin{align*}
\left\Vert \sum\nolimits _{i}\alpha_{i}\chi_{A_{i}}\right\Vert _{L^{2}\left(\mu\right)}^{2} & =\sum\nolimits _{i}\sum\nolimits _{j}\alpha_{i}\alpha_{j}\mu\left(A_{i}\cap A_{j}\right)\\
 & =\left\Vert \sum\nolimits _{i}\alpha_{i}\beta_{\mu}\left(A_{i},\cdot\right)\right\Vert _{\mathscr{H}\left(\beta_{\mu}\right)}^{2},
\end{align*}
so $T$ is isometric and onto. 
\end{proof}

\section{\label{sec:GF}Gaussian Fields}

Let $\left(M,\mathscr{B},\mu\right)$ be a $\sigma$-finite measure
space. By a \emph{Gaussian field} based on $\left(M,\mathscr{B},\mu\right)$,
we mean a probability space $\left(\Omega,\mathscr{C},\mathbb{P}^{\left(\mu\right)}\right)$,
depending on $\mu$, such that $\mathscr{C}$ is a $\sigma$-algebra
of subsets of $\Omega$, and $\mathbb{P}^{\left(\mu\right)}$ is a
probability measure on $\left(\Omega,\mathscr{C}\right)$. For every
$A\in\mathscr{B}_{fin}$, it is assumed that $X_{A}^{\left(\mu\right)}$
is in $L^{2}\left(\Omega,\mathscr{C},\mathbb{P}^{\left(\mu\right)}\right)$;
and in addition, 
\begin{equation}
X_{A}^{\left(\mu\right)}\sim N\left(0,\mu\left(A\right)\right),
\end{equation}
i.e., the distribution of $X_{A}^{\left(\mu\right)}$, computed for
$\mathbb{P}^{\left(\mu\right)}$ is the standard Gaussian with variance
$\mu\left(A\right)$. 

Finally, set $\mathbb{E}_{\mu}\left(\cdot\right)=\int_{\Omega}\left(\cdot\right)d\mathbb{P}^{\left(\mu\right)}$;
then it is required that 
\begin{equation}
\mathbb{E}_{\mu}\left(X_{A}^{\left(\mu\right)}X_{B}^{\left(\mu\right)}\right)=\mu\left(A\cap B\right),\quad\forall A,B\in\mathscr{B}_{fin}.\label{eq:G2}
\end{equation}
For a background reference on probability spaces, see e.g., \cite{MR1278486}.
\begin{prop}
\label{prop:G1}Given $\left(M,\mathscr{B},\mu\right)$, $\sigma$-finite,
then there is an associated Gaussian field $\{X_{A}^{\left(\mu\right)}\}_{A\in\mathscr{B}_{fin}}$
satisfying 
\begin{equation}
\mathbb{E}\left(X_{A}^{\left(\mu\right)}X_{B}^{\left(\mu\right)}\right)=\mu\left(A\cap B\right),
\end{equation}
for all $A,B\in\mathscr{B}_{fin}$.
\end{prop}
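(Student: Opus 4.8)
The plan is to view this as the special case, for the kernel $\beta_\mu(A,B)=\mu(A\cap B)$ of (\ref{eq:T8}), of the general principle recalled in the Introduction: every positive definite kernel is the covariance kernel of a centered Gaussian process. So there are two steps. First, check that $\beta_\mu$ is positive definite on $\mathscr{B}_{fin}\times\mathscr{B}_{fin}$; this is immediate and is in fact already recorded in the example following (\ref{eq:T8}): for all $n$, $\alpha_i\in\mathbb{R}$, $A_i\in\mathscr{B}_{fin}$,
\[
\sum_{i=1}^{n}\sum_{j=1}^{n}\alpha_i\alpha_j\,\mu(A_i\cap A_j)=\int_M\Big(\sum_{i=1}^{n}\alpha_i\chi_{A_i}\Big)^2 d\mu\ \geq\ 0 .
\]
Second, produce the Gaussian field realizing $\beta_\mu$ as its covariance.

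For the second step I would give the standard construction via Kolmogorov's extension theorem. For each finite tuple $(A_1,\dots,A_n)$ from $\mathscr{B}_{fin}$ the matrix $C=\big(\mu(A_i\cap A_j)\big)_{i,j=1}^{n}$ is real symmetric and positive semidefinite by the display above, hence is the covariance matrix of a unique centered Gaussian law $\gamma_{A_1,\dots,A_n}$ on $\mathbb{R}^{n}$ (characteristic function $t\mapsto\exp(-\tfrac12 t^{\top}Ct)$, degenerate cases allowed). The family $\{\gamma_{A_1,\dots,A_n}\}$ is consistent in the sense of Kolmogorov: permuting the coordinates, or projecting one out, acts on $C$ by the corresponding operation, and since the entries of $C$ are determined coordinatewise from $\beta_\mu$ this matches the $\gamma$ attached to the shorter tuple. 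Kolmogorov's theorem then yields a probability space $\left(\Omega,\mathscr{C},\mathbb{P}^{\left(\mu\right)}\right)$ — e.g. $\Omega=\mathbb{R}^{\mathscr{B}_{fin}}$ with the cylinder $\sigma$-algebra — carrying coordinate variables $X_{A}^{\left(\mu\right)}$, $A\in\mathscr{B}_{fin}$, having these finite-dimensional distributions. Then $X_{A}^{\left(\mu\right)}\sim N(0,\mu(A))$, so $X_{A}^{\left(\mu\right)}\in L^{2}\left(\Omega,\mathscr{C},\mathbb{P}^{\left(\mu\right)}\right)$, and
\[
\mathbb{E}_{\mu}\big(X_{A}^{\left(\mu\right)}X_{B}^{\left(\mu\right)}\big)=\mu(A\cap B),\qquad\forall A,B\in\mathscr{B}_{fin},
\]
which is exactly (\ref{eq:G2}).

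An equivalent route, more in the spirit of the present RKHS viewpoint, is to take an isonormal Gaussian process $W:L^{2}(\mu)\to L^{2}\left(\Omega,\mathbb{P}^{\left(\mu\right)}\right)$, i.e. a linear isometry into a Gaussian subspace; such $W$ exists for any Hilbert space — fix an orthonormal basis $\{e_i\}_{i\in I}$, let $\Omega=\mathbb{R}^{I}$ with the product standard-Gaussian measure and i.i.d. coordinates $\xi_i$, and set $W(h)=\sum_{i}\langle h,e_i\rangle_{L^{2}(\mu)}\,\xi_i$ (an $L^{2}$-convergent series, since for fixed $h$ only countably many coefficients are nonzero). Putting $X_{A}^{\left(\mu\right)}:=W(\chi_A)$ gives $\mathbb{E}_{\mu}(X_{A}^{\left(\mu\right)}X_{B}^{\left(\mu\right)})=\langle\chi_A,\chi_B\rangle_{L^{2}(\mu)}=\mu(A\cap B)$ and, in particular, $A\mapsto X_{A}^{\left(\mu\right)}$ extends to an isometry of $L^{2}(\mu)$ — equivalently of $\mathscr{H}(\beta_\mu)$, by the example after (\ref{eq:T8}) — into the Gaussian Hilbert space.

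I do not expect a genuine obstacle here; the one point deserving a word of care is that $\sigma$-finiteness of $\mu$ does not make $L^{2}(\mu)$ separable, so in the second construction one must allow an uncountable basis index set $I$ (harmless, as noted), and in the first one must use the form of Kolmogorov's theorem valid for an arbitrary index set of real coordinates. Both are standard, so in the write-up I would either simply cite the general existence theorem from the Introduction or present the Kolmogorov construction in a couple of lines.
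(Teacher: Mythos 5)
Your first construction is essentially the paper's own proof: the paper also applies Kolmogorov's extension theorem to the centered Gaussian laws with covariance matrices $\big[\mu(A_i\cap A_j)\big]_{i,j=1}^{n}$ and realizes $X_A^{(\mu)}(\omega)=\omega(A)$ as coordinate functions on $\Omega=\dot{\mathbb{R}}^{\mathscr{B}_{fin}}$ with the cylinder $\sigma$-algebra. Your added explicit check of positive semidefiniteness (and consistency), and the alternative isonormal-process route via an ONB of $L^{2}(\mu)$, are fine supplements but do not change the argument, which is correct.
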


\begin{proof}
For all $n\in\mathbb{N}$, $\left\{ A_{i}\right\} _{1}^{n}$, $A_{i}\in\mathscr{B}_{fin}$,
let $g^{\left(A_{i}\right)}$ be the Gaussian distribution on $\mathbb{R}^{n}$,
with mean zero, and covariance matrix 
\begin{equation}
\big[\mu\left(A_{i}\cap A_{j}\right)\big]_{i,j=1}^{n}.\label{eq:G3}
\end{equation}
By Kolmogorov's theorem \cite{MR0032961,MR0150810,MR0279844,MR562914,MR3272038,MR3642406},
there is a unique probability measure $\mathbb{P}^{\left(\mu\right)}$
on the infinite Cartesian product
\begin{equation}
\Omega=\dot{\mathbb{R}}^{\mathscr{B}_{fin}}\label{eq:G4}
\end{equation}
such that 
\begin{equation}
\mathbb{E}_{\mu}\left(\cdot\cdot\mid\left\{ A_{1},\cdots,A_{n}\right\} \right)=g^{\left(A_{i}\right)}.
\end{equation}
For $\omega\in\Omega=\dot{\mathbb{R}}^{\mathscr{B}_{fin}}$, set 
\begin{equation}
X_{A}^{\left(\mu\right)}\left(\omega\right)=\omega\left(A\right),\quad A\in\mathscr{B}_{fin}.
\end{equation}
For the $\sigma$-algebra $\mathscr{C}$ of subsets in $\Omega$,
we take the cylinder $\sigma$-algebra, which is generated by 
\begin{equation}
\left\{ \omega\in\Omega\mid a_{i}<\omega\left(A_{i}\right)<b_{i}\right\} ,
\end{equation}
with $\left\{ A_{i}\right\} _{1}^{n}\subset\mathscr{B}_{fin}$, and
open intervals $\left(a_{i},b_{i}\right)$; see \figref{cyl}.
\end{proof}
\begin{figure}
\includegraphics[width=0.6\textwidth]{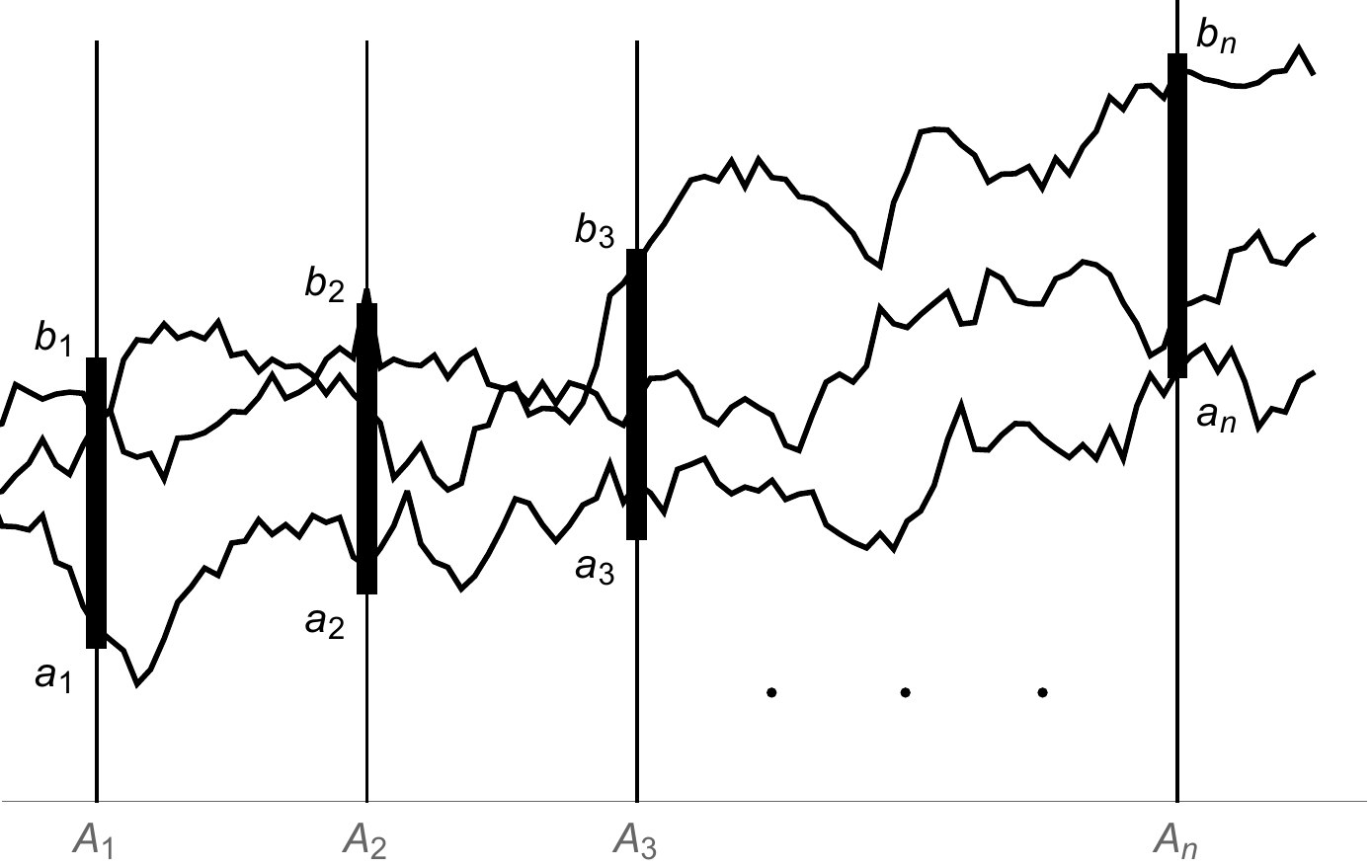}

\caption{\label{fig:cyl}A cylinder set in $\Omega$.}

\end{figure}

\begin{cor}
\label{cor:G2}Let $\left(M,\mathscr{B},\mu\right)$ be given, $\sigma$-finite,
and let $X^{\left(\mu\right)}$ be an associated Gaussian field; see
\propref{G1}, and (\ref{eq:G2}). 

Let $\mathscr{D}_{fin}\left(\mu\right)=span\left\{ \chi_{A}\mid A\in\mathscr{B}_{fin}\right\} $;
then 
\begin{equation}
\mathscr{D}_{fin}\left(\mu\right)\ni\sum_{i}\alpha_{i}\chi_{A_{i}}\longmapsto\sum_{i}\alpha_{i}X_{A_{i}}^{\left(\mu\right)}
\end{equation}
extends by closure to an isometry of $L^{2}\left(\mu\right)$ into
$L^{2}\left(\Omega,\mathbb{P}^{\left(\mu\right)}\right)$, called
the generalized It\^{o}-Wiener integral. 
\end{cor}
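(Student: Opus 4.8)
The plan is to define the map asserted in the Corollary first on the dense subspace $\mathscr{D}_{fin}\left(\mu\right)\subset L^{2}\left(\mu\right)$, to check there by a one-line second-moment computation that it is well defined, linear, and isometric, and then to extend it by continuity, using density of $\mathscr{D}_{fin}\left(\mu\right)$ in $L^{2}\left(\mu\right)$ together with completeness of $L^{2}\left(\Omega,\mathbb{P}^{\left(\mu\right)}\right)$.

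First I would set $W\big(\sum_{i}\alpha_{i}\chi_{A_{i}}\big):=\sum_{i}\alpha_{i}X_{A_{i}}^{\left(\mu\right)}$ on finite linear combinations. Each $X_{A}^{\left(\mu\right)}$ lies in $L^{2}\left(\Omega,\mathbb{P}^{\left(\mu\right)}\right)$, since $\mathbb{E}_{\mu}\big((X_{A}^{\left(\mu\right)})^{2}\big)=\mu\left(A\right)<\infty$ for $A\in\mathscr{B}_{fin}$, so a finite combination of such variables is again in $L^{2}\left(\Omega,\mathbb{P}^{\left(\mu\right)}\right)$. The key identity is that for $\varphi=\sum_{i=1}^{n}\alpha_{i}\chi_{A_{i}}$,
\[
\mathbb{E}_{\mu}\Big[\big(\sum\nolimits _{i}\alpha_{i}X_{A_{i}}^{\left(\mu\right)}\big)^{2}\Big]=\sum\nolimits _{i}\sum\nolimits _{j}\alpha_{i}\alpha_{j}\,\mathbb{E}_{\mu}\big(X_{A_{i}}^{\left(\mu\right)}X_{A_{j}}^{\left(\mu\right)}\big)=\sum\nolimits _{i}\sum\nolimits _{j}\alpha_{i}\alpha_{j}\,\mu\left(A_{i}\cap A_{j}\right)=\int_{M}\left|\varphi\right|^{2}d\mu=\left\Vert \varphi\right\Vert _{L^{2}\left(\mu\right)}^{2},
\]
where the second equality is (\ref{eq:G2}) and the third uses $\chi_{A_{i}}\chi_{A_{j}}=\chi_{A_{i}\cap A_{j}}$. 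Applying this identity to the difference of two finite representations of a single element of $L^{2}\left(\mu\right)$ shows simultaneously that $W$ is well defined (the representation $\sum_{i}\alpha_{i}\chi_{A_{i}}$ of an $L^{2}$-class being highly non-unique) and that it is linear and isometric on $\mathscr{D}_{fin}\left(\mu\right)$.

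Since $W$ is isometric it is uniformly continuous; $\mathscr{D}_{fin}\left(\mu\right)$ is dense in $L^{2}\left(\mu\right)$ (as recalled in the text preceding \thmref{T1}); and $L^{2}\left(\Omega,\mathbb{P}^{\left(\mu\right)}\right)$ is complete. Hence, by the standard bounded-linear-extension argument, $W$ has a unique continuous extension $\overline{W}:L^{2}\left(\mu\right)\longrightarrow L^{2}\left(\Omega,\mathbb{P}^{\left(\mu\right)}\right)$, and taking limits in the norm identity of the previous paragraph shows that $\overline{W}$ is again isometric (in general not onto: its range is the closed span of $\{X_{A}^{\left(\mu\right)}\mid A\in\mathscr{B}_{fin}\}$, the first Gaussian chaos). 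By definition $\overline{W}$ is the generalized It\^{o}-Wiener integral $\varphi\mapsto\int\varphi\,dX^{\left(\mu\right)}$, which proves the Corollary.

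The argument is essentially routine, so I do not expect a serious obstacle; the one point deserving care is the well-definedness of $W$ on $\mathscr{D}_{fin}\left(\mu\right)$ — that the value $\sum_{i}\alpha_{i}X_{A_{i}}^{\left(\mu\right)}$ depends only on the class $\sum_{i}\alpha_{i}\chi_{A_{i}}\in L^{2}\left(\mu\right)$, not on the chosen finite representation — which, as noted, is already subsumed in the isometry identity. Gaussianity of the field plays no part in the isometry beyond its role in \propref{G1}, which supplies a field with the prescribed covariance (\ref{eq:G2}); it is, however, what subsequently makes the elements in the range of $\overline{W}$ centered Gaussian random variables.
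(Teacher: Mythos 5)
Your proposal is correct and follows essentially the same route as the paper's proof: compute the second moment of $\sum_{i}\alpha_{i}X_{A_{i}}^{\left(\mu\right)}$ via the covariance identity (\ref{eq:G2}), identify it with $\left\Vert \sum_{i}\alpha_{i}\chi_{A_{i}}\right\Vert _{L^{2}\left(\mu\right)}^{2}$, and extend the resulting isometry by density of $\mathscr{D}_{fin}\left(\mu\right)$ and completeness of $L^{2}\left(\Omega,\mathbb{P}^{\left(\mu\right)}\right)$. Your explicit attention to well-definedness on equivalence classes is a welcome (if minor) refinement of the same argument.
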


\begin{proof}
We have for all linear combinations as above, 
\begin{eqnarray*}
\left\Vert \sum\nolimits _{i}\alpha_{i}X_{A_{i}}^{\left(\mu\right)}\right\Vert _{L^{2}(\Omega,\mathbb{P}^{\left(\mu\right)})}^{2} & = & \sum\nolimits _{i}\sum\nolimits _{j}\alpha_{i}\alpha_{j}\mathbb{E}_{\mu}\left(X_{A_{i}}^{\left(\mu\right)}X_{A_{j}}^{\left(\mu\right)}\right)\\
 & \underset{\text{by \ensuremath{\left(\ref{eq:G2}\right)}}}{=} & \sum\nolimits _{i}\sum\nolimits _{j}\alpha_{i}\alpha_{j}\mu\left(A_{i}\cap A_{j}\right)\\
 & = & \left\Vert \sum\nolimits _{i}\alpha_{i}\chi_{A_{i}}\right\Vert _{L^{2}\left(\mu\right)}^{2}
\end{eqnarray*}
which is the desired isometry. Hence 
\begin{equation}
T_{\mu}:\underset{\varphi}{\underbrace{\sum\nolimits _{i}\alpha_{i}\chi_{A_{i}}}}\longrightarrow\sum\nolimits _{i}\alpha_{i}X_{A_{i}}^{\left(\mu\right)}
\end{equation}
extends by closure to an isometry 
\begin{equation}
T_{\mu}\left(\varphi\right):=X_{\varphi}^{\left(\mu\right)},\label{eq:G10}
\end{equation}
i.e., 
\[
\mathbb{E}_{\mu}\left(\left|X_{\varphi}^{\left(\mu\right)}\right|^{2}\right)=\int_{M}\left|\varphi\right|^{2}d\mu,\quad\text{and}\quad\mathbb{E}_{\mu}\left(X_{\varphi_{1}}^{\left(\mu\right)}X_{\varphi_{2}}^{\left(\mu\right)}\right)=\int_{M}\varphi_{1}\varphi_{2}\,d\mu
\]
hold for all $\varphi_{1},\varphi_{2}\in L^{2}\left(\mu\right)$.
Moreover, $X_{\varphi}^{\left(\mu\right)}\sim N\big(0,\left\Vert \varphi\right\Vert _{L^{2}\left(\mu\right)}^{2}\big)$
as stated. 
\end{proof}
\begin{cor}
\label{cor:G3}Let $\left(M,\mathscr{B},\mu\right)$ be as above,
i.e., $\mu$ is assumed $\sigma$-finite. Suppose, in addition, that
$\mu$ is non-atomic; then the quadratic variation of the Gaussian
process $X^{\left(\mu\right)}$ coincides with the measure $\mu$
itself. 
\end{cor}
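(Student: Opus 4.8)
The plan is to realize the quadratic variation of the field $X^{(\mu)}=\{X_A^{(\mu)}\}_{A\in\mathscr{B}_{fin}}$ set-wise. Fix $A\in\mathscr{B}_{fin}$; for a finite $\mathscr{B}$-measurable partition $\pi=\{A_1,\dots,A_n\}$ of $A$ (pairwise disjoint, $\bigcup_i A_i=A$) put $Q_\pi(A):=\sum_{i=1}^n\big(X_{A_i}^{(\mu)}\big)^2$ and write $|\pi|:=\max_{1\le i\le n}\mu(A_i)$ for its mesh; the claim to establish is that $Q_\pi(A)\to\mu(A)$ in $L^2(\Omega,\mathbb{P}^{(\mu)})$ as $|\pi|\to0$, for every such $A$. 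The first thing I would verify is that partitions of arbitrarily small mesh actually exist, and this is precisely where non-atomicity enters: by the classical theorem of Sierpi\'{n}ski on the range of a non-atomic finite measure, any set of finite positive $\mu$-measure splits into two measurable halves of equal measure, so iterating the dyadic subdivision produces, for each $k$, a partition $\pi_k$ of $A$ into $2^k$ pieces of measure $2^{-k}\mu(A)$, whence $|\pi_k|\to0$. (In the one-parameter special case $M=[0,\infty)$ with $\mu$ non-atomic and $X_t:=X_{[0,t]}^{(\mu)}$, these are ordinary partitions of $[0,t]$ and the statement reads $\langle X\rangle_t=\mu([0,t])$, the clock of a time-changed Brownian motion.)

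The heart of the proof is then two short moment computations, both immediate from \propref{G1} and (\ref{eq:G2}). Since $X_{A_i}^{(\mu)}\sim N(0,\mu(A_i))$, we have $\mathbb{E}_{\mu}\big((X_{A_i}^{(\mu)})^2\big)=\mu(A_i)$, so
\[
\mathbb{E}_{\mu}\big(Q_\pi(A)\big)=\sum\nolimits_{i=1}^n\mu(A_i)=\mu(A)
\]
for \emph{every} partition $\pi$. For the variance, disjointness of the $A_i$ gives $\mathbb{E}_{\mu}\big(X_{A_i}^{(\mu)}X_{A_j}^{(\mu)}\big)=\mu(A_i\cap A_j)=0$ for $i\neq j$; since the family $\{X_{A_i}^{(\mu)}\}_i$ is jointly Gaussian, the $X_{A_i}^{(\mu)}$ are independent, and using $\mathrm{Var}(Z^2)=2\sigma^4$ for $Z\sim N(0,\sigma^2)$,
\[
\mathrm{Var}\big(Q_\pi(A)\big)=2\sum\nolimits_{i=1}^n\mu(A_i)^2\le 2\,|\pi|\sum\nolimits_{i=1}^n\mu(A_i)=2\,|\pi|\,\mu(A).
\]

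Combining these, $\mathbb{E}_{\mu}\big(|Q_\pi(A)-\mu(A)|^2\big)=\mathrm{Var}\big(Q_\pi(A)\big)\le 2|\pi|\,\mu(A)\xrightarrow[\;|\pi|\to0\;]{}0$; the bound is uniform over all partitions of a given mesh, so $Q_\pi(A)\to\mu(A)$ in $L^2(\Omega,\mathbb{P}^{(\mu)})$ along every sequence of partitions with vanishing mesh, and — since $\sum_k 2^{-k}\mu(A)<\infty$ — the dyadic refinements $\pi_k$ above give $Q_{\pi_k}(A)\to\mu(A)$ $\mathbb{P}^{(\mu)}$-almost surely as well, via Chebyshev and Borel--Cantelli. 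As this holds for each $A\in\mathscr{B}_{fin}$ and the limiting set function is $A\mapsto\mu(A)$, the quadratic variation of $X^{(\mu)}$ is identified with $\mu$.

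I would close by emphasizing that non-atomicity is genuinely necessary and not a mere convenience: if $A$ contains an atom $\{a\}$ with $\mu(\{a\})=c>0$, then every partition piece meeting $a$ has $\mu$-measure $\ge c$, the mesh can never drop below $c$, and the term $(X_{\{a\}}^{(\mu)})^2$ — a nondegenerate $c\,\chi^2$ variable — never leaves $Q_\pi(A)$, so the quadratic variation fails even to be deterministic. The single step demanding care is therefore the first one: settling on the right notion of quadratic variation for a set-indexed (as opposed to time-ordered) Gaussian field, and invoking the Sierpi\'{n}ski-type splitting to guarantee partitions of vanishing mesh. With that scaffolding in place, the two moment identities dispatch the rest with essentially no further work.
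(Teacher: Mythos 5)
Your proposal is correct and follows essentially the same route as the paper: the identity $\mathbb{E}_{\mu}\big((X_{A_i}^{(\mu)})^2\big)=\mu(A_i)$, the fourth-moment computation giving variance $2\sum_i\mu(A_i)^2$ per partition (with independence of the disjoint pieces justifying the sum), and the bound $\sum_i\mu(A_i)^2\le\big(\max_i\mu(A_i)\big)\mu(A)$ yielding $L^2$-convergence along the net of partitions with vanishing mesh. Your added points --- the Sierpi\'{n}ski splitting argument showing non-atomicity furnishes partitions of arbitrarily small mesh, and the Borel--Cantelli upgrade to almost sure convergence along dyadic refinements --- are sound refinements of details the paper leaves implicit.
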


\begin{proof}
Consider $B\in\mathscr{B}_{fin}$, and consider all partitions $PAR\left(B\right)$
of the set $B$, i.e., 
\begin{equation}
\pi=\left\{ \left(A_{i}\right)\right\} 
\end{equation}
specified as follows: $A_{i}\in\mathscr{B}_{fin}$, $A_{i}\cap A_{j}=\emptyset$
if $i\neq j$, and $\cup_{i}A_{i}=B$. 

We consider the limit over the net of such partitions.We show that
\begin{equation}
\mathbb{E}_{\mu}\left(\left|\mu\left(B\right)-\sum\nolimits _{i}(X_{A_{i}}^{\left(\mu\right)})^{2}\right|^{2}\right)\longrightarrow0\label{eq:G12}
\end{equation}
as $\pi\rightarrow0$, i.e., $\max_{i}\mu\left(A_{i}\right)\rightarrow0$,
for $\pi=\left(A_{i}\right)\in PAR\left(B\right)$. 

Since, for $\pi=\left(A_{i}\right)\in PAR\left(B\right)$, we have
$\sum_{i}\mu\left(A_{i}\right)=\mu\left(B\right)$, to prove (\ref{eq:G12}),
we need only consider the individual terms; $i$ fixed: 
\begin{eqnarray*}
 &  & \mathbb{E}_{\mu}\left(\left|\mu\left(A_{i}\right)-(X_{A_{i}}^{\left(\mu\right)})^{2}\right|^{2}\right)\\
 & = & \mu\left(A_{i}\right)^{2}-2\mu\left(A_{i}\right)\mathbb{E}_{\mu}\left((X_{A_{i}}^{\left(\mu\right)})^{2}\right)+\mathbb{E}_{\mu}\left((X_{A_{i}}^{\left(\mu\right)})^{4}\right).
\end{eqnarray*}
But 
\[
\mathbb{E}_{\mu}\left((X_{A_{i}}^{\left(\mu\right)})^{2}\right)=\mu\left(A_{i}\right),\quad\text{and}\quad\mathbb{E}_{\mu}\left((X_{A_{i}}^{\left(\mu\right)})^{4}\right)=3\mu\left(A_{i}\right)^{2};
\]
and so 
\[
\mathbb{E}_{\mu}\left(\left|\mu\left(A_{i}\right)-(X_{A_{i}}^{\left(\mu\right)})^{2}\right|^{2}\right)=2\mu\left(A_{i}\right)^{2}.
\]
Now, for $\pi=\left(A_{i}\right)\in PAR\left(B\right)$, we have:
\[
\sum_{i}\mu\left(A_{i}\right)^{2}\leq\underset{\rightarrow0}{\underbrace{\left(\max_{i}\mu\left(A_{i}\right)\right)}}\mu\left(B\right)\quad\text{as \ensuremath{\pi\rightarrow0;}}
\]
and the desired conclusion (\ref{eq:G12}) follows.

By general theory, fixing a non-atomic measure space $\left(\mathscr{B},\mu\right)$,
then the set $\pi$ of all $\left(\mathscr{B},\mu\right)$-partitions
(see above) can be given an obvious structure of refinement. This
in turn yields a corresponding net, and net-convergence refers limit
over this net, as the refinement mesh tends to zero. Specifically,
as $\max_{i}\mu\left(A_{i}\right)\rightarrow0$.
\end{proof}
\begin{cor}
Let $\mu$ and $\nu$ be two positive $\sigma$-finite measures on
a fixed measure space $\left(M,\mathscr{B}\right)$; see \corref{G3}
for the detailed setting. Let $X^{\left(\mu\right)}$ and $X^{\left(\nu\right)}$
be the corresponding Gaussian fields. Consider nets of partitions
$\pi=\left\{ \left(A_{i}\right)\right\} $ from $\left(M,\mathscr{B}\right)$. 
\begin{enumerate}
\item If $B\in\mathscr{B}$, then the limit
\begin{equation}
\lim_{\stackrel{\pi\rightarrow0}{{\scriptscriptstyle \pi\in PAR\left(B\right)}}}\sum_{i}X_{A_{i}}^{\left(\mu\right)}X_{A_{i}}^{\left(\nu\right)}
\end{equation}
exists; and it defines a signed measure, denoted $\langle X^{\left(\mu\right)},X^{\left(\nu\right)}\rangle$,
satisfying 
\begin{equation}
\langle X^{\left(\mu\right)},X^{\left(\nu\right)}\rangle=\frac{1}{2}\left(\langle X^{\left(\mu\right)}\rangle+\langle X^{\left(\nu\right)}\rangle-\langle X^{\left(\mu\right)}-X^{\left(\nu\right)}\rangle\right).
\end{equation}
\item If $\lambda$ is a positive measure on $\left(M,\mathscr{B}\right)$
satisfying $\mu\ll\lambda$, and $\nu\ll\lambda$, with respective
Radon-Nikodym derivatives $d\mu/d\lambda$ and $d\nu/d\lambda$, then
\begin{equation}
\langle X^{\left(\mu\right)},X^{\left(\nu\right)}\rangle=\sqrt{\frac{d\mu}{d\lambda}\frac{d\nu}{d\lambda}}\,d\lambda,\label{eq:G15}
\end{equation}
where the representation in (\ref{eq:G15}) is \uline{independent}
of the choice of measures $\lambda$ subject to: $\mu\ll\lambda$,
$\nu\ll\lambda$. 
\end{enumerate}
\end{cor}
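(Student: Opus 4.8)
The plan is to realize $X^{\left(\mu\right)}$ and $X^{\left(\nu\right)}$ jointly on one probability space and then reduce the cross-variation to three ordinary quadratic variations, each disposed of by \corref{G3}. First fix a positive $\sigma$-finite $\lambda$ on $\left(M,\mathscr{B}\right)$ with $\mu\ll\lambda$ and $\nu\ll\lambda$ (say $\lambda=\mu+\nu$), put $a=d\mu/d\lambda\geq0$, $b=d\nu/d\lambda\geq0$, and use the It\^{o}--Wiener isometry of \corref{G2} for the field $X^{\left(\lambda\right)}$ to define
\[
X_{A}^{\left(\mu\right)}:=X_{\sqrt{a}\,\chi_{A}}^{\left(\lambda\right)},\qquad X_{A}^{\left(\nu\right)}:=X_{\sqrt{b}\,\chi_{A}}^{\left(\lambda\right)},\qquad A\in\mathscr{B}_{fin}\left(\mu\right)\cap\mathscr{B}_{fin}\left(\nu\right)
\]
(legitimate since $\int\big|\sqrt{a}\,\chi_{A}\big|^{2}d\lambda=\mu\left(A\right)<\infty$, likewise for $\nu$). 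The isometry yields $\mathbb{E}_{\lambda}\big(X_{A}^{\left(\mu\right)}X_{B}^{\left(\mu\right)}\big)=\mu\left(A\cap B\right)$, similarly for $\nu$, and the cross-covariance $\mathbb{E}_{\lambda}\big(X_{A}^{\left(\mu\right)}X_{B}^{\left(\nu\right)}\big)=\int_{A\cap B}\sqrt{ab}\,d\lambda$; being centered jointly Gaussian with these covariances, the two families are honest versions of the Gaussian fields of $\mu$ and of $\nu$, now living together in $L^{2}\big(\mathbb{P}^{\left(\lambda\right)}\big)$. It suffices to treat $B\in\mathscr{B}_{fin}\left(\mu\right)\cap\mathscr{B}_{fin}\left(\nu\right)$ and extend by $\sigma$-additivity in $B$.

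Next apply the scalar identity $xy=\tfrac12\big(x^{2}+y^{2}-\left(x-y\right)^{2}\big)$ termwise along a partition $\pi=\left(A_{i}\right)\in PAR\left(B\right)$:
\[
\sum\nolimits_{i}X_{A_{i}}^{\left(\mu\right)}X_{A_{i}}^{\left(\nu\right)}=\tfrac12\Big(\sum\nolimits_{i}\big(X_{A_{i}}^{\left(\mu\right)}\big)^{2}+\sum\nolimits_{i}\big(X_{A_{i}}^{\left(\nu\right)}\big)^{2}-\sum\nolimits_{i}\big(X_{A_{i}}^{\left(\mu\right)}-X_{A_{i}}^{\left(\nu\right)}\big)^{2}\Big).
\]
The difference $Y_{A}:=X_{A}^{\left(\mu\right)}-X_{A}^{\left(\nu\right)}=X_{\left(\sqrt{a}-\sqrt{b}\right)\chi_{A}}^{\left(\lambda\right)}$ is again centered Gaussian with covariance $\mathbb{E}_{\lambda}\left(Y_{A}Y_{B}\right)=\kappa\left(A\cap B\right)$, where $d\kappa=\big(\sqrt{a}-\sqrt{b}\big)^{2}d\lambda$. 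From $\big(\sqrt{a}-\sqrt{b}\big)^{2}\leq a+b$ we get $\kappa\leq\mu+\nu$, so $\kappa$ is $\sigma$-finite, and it is non-atomic because $\mu+\nu$ is (if $E$ were an atom of $\mu+\nu$ with, say, $\mu\left(E\right)>0$, non-atomicity of $\mu$ would supply $E'\subseteq E$ with $0<\mu\left(E'\right)<\mu\left(E\right)$, forcing $0<\left(\mu+\nu\right)\left(E'\right)<\left(\mu+\nu\right)\left(E\right)$, a contradiction). Hence $Y$ is a version of the Gaussian field $X^{\left(\kappa\right)}$, and \corref{G3} is applicable to each of the non-atomic $\sigma$-finite measures $\mu$, $\nu$, $\kappa$.

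Interpreting $\pi\rightarrow0$ as $\max_{i}\left(\mu+\nu\right)\left(A_{i}\right)\rightarrow0$---which forces each of $\max_{i}\mu\left(A_{i}\right)$, $\max_{i}\nu\left(A_{i}\right)$, $\max_{i}\kappa\left(A_{i}\right)$ to zero, since all three measures are $\leq\mu+\nu$---\corref{G3} (whose proof uses only joint Gaussianity and the covariance structure, hence transfers to this joint realization) gives, simultaneously in $L^{2}\big(\mathbb{P}^{\left(\lambda\right)}\big)$,
\[
\sum\nolimits_{i}\big(X_{A_{i}}^{\left(\mu\right)}\big)^{2}\to\mu\left(B\right),\qquad\sum\nolimits_{i}\big(X_{A_{i}}^{\left(\nu\right)}\big)^{2}\to\nu\left(B\right),\qquad\sum\nolimits_{i}\left(Y_{A_{i}}\right)^{2}\to\kappa\left(B\right).
\]
Thus the partition sums converge in $L^{2}$ to the deterministic value $\tfrac12\big(\mu\left(B\right)+\nu\left(B\right)-\kappa\left(B\right)\big)$; passing to the limit in the displayed polarization identity proves existence of $\langle X^{\left(\mu\right)},X^{\left(\nu\right)}\rangle$ together with $\langle X^{\left(\mu\right)},X^{\left(\nu\right)}\rangle=\tfrac12\big(\langle X^{\left(\mu\right)}\rangle+\langle X^{\left(\nu\right)}\rangle-\langle X^{\left(\mu\right)}-X^{\left(\nu\right)}\rangle\big)$, with $\langle X^{\left(\mu\right)}\rangle=\mu$, $\langle X^{\left(\nu\right)}\rangle=\nu$, $\langle X^{\left(\mu\right)}-X^{\left(\nu\right)}\rangle=\kappa$ by \corref{G3}. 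Since $\tfrac12\big(a+b-\left(\sqrt{a}-\sqrt{b}\right)^{2}\big)=\sqrt{ab}$, this limit is $\int_{B}\sqrt{ab}\,d\lambda$, a countably additive (monotone convergence) positive measure dominated by $\tfrac12\left(\mu+\nu\right)$, hence a bona fide signed measure; this is exactly (\ref{eq:G15}), proving part (i).

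For independence of (\ref{eq:G15}) from $\lambda$: given $\lambda_{1},\lambda_{2}$ with $\mu,\nu\ll\lambda_{j}$, set $\lambda_{0}=\lambda_{1}+\lambda_{2}$, so $\lambda_{j}\ll\lambda_{0}$ and $\mu,\nu\ll\lambda_{0}$, and the chain rule gives $\frac{d\mu}{d\lambda_{j}}\frac{d\lambda_{j}}{d\lambda_{0}}=\frac{d\mu}{d\lambda_{0}}$ and $\frac{d\nu}{d\lambda_{j}}\frac{d\lambda_{j}}{d\lambda_{0}}=\frac{d\nu}{d\lambda_{0}}$ ($\lambda_{0}$-a.e.). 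Absorbing $\frac{d\lambda_{j}}{d\lambda_{0}}\geq0$ into the square root,
\[
\sqrt{\tfrac{d\mu}{d\lambda_{j}}\tfrac{d\nu}{d\lambda_{j}}}\,d\lambda_{j}=\sqrt{\tfrac{d\mu}{d\lambda_{j}}\tfrac{d\nu}{d\lambda_{j}}}\;\tfrac{d\lambda_{j}}{d\lambda_{0}}\,d\lambda_{0}=\sqrt{\Big(\tfrac{d\mu}{d\lambda_{j}}\tfrac{d\lambda_{j}}{d\lambda_{0}}\Big)\Big(\tfrac{d\nu}{d\lambda_{j}}\tfrac{d\lambda_{j}}{d\lambda_{0}}\Big)}\,d\lambda_{0}=\sqrt{\tfrac{d\mu}{d\lambda_{0}}\tfrac{d\nu}{d\lambda_{0}}}\,d\lambda_{0}
\]
for $j=1,2$, so the two candidate measures coincide (and, in passing, the cross-covariance and joint law used in the first paragraph were already independent of $\lambda$). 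I expect the only genuinely delicate point to be the middle paragraph---verifying that the difference field $X^{\left(\mu\right)}-X^{\left(\nu\right)}$ is a version of the Gaussian field of a \emph{non-atomic} $\sigma$-finite measure $\kappa$, so that \corref{G3} legitimately applies to the third sum; the joint realization, the scalar polarization identity, and the net bookkeeping are all routine given \corref{G2} and \corref{G3}.
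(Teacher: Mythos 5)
Your proposal is correct, and it supplies in full what the paper leaves as a two-line pointer, by a somewhat more elementary route. The paper's proof only says that the details follow those of \corref{G3} and appeals to the theory of sigma-Hilbert (universal) Hilbert spaces; that machinery is what produces the $\lambda$-independent object $\sqrt{d\mu\,d\nu}$ structurally, as an inner product of half-densities, and realizes all the fields on one space. You instead build the joint realization by hand through the white noise of a dominating measure $\lambda=\mu+\nu$ via the It\^{o}--Wiener isometry of \corref{G2}, reduce the cross-variation to three quadratic variations by the scalar polarization identity, dispose of each by \corref{G3}, and verify $\lambda$-independence by the Radon--Nikodym chain rule. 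This buys two things the paper's sketch does not make explicit: it pins down the coupling of $X^{\left(\mu\right)}$ and $X^{\left(\nu\right)}$, without which the cross-variation is not even well defined (an independent coupling would give $0$ rather than (\ref{eq:G15})), and it replaces the citation to universal Hilbert spaces by a self-contained computation; your coupling is exactly the canonical one that formalism encodes, so the two arguments prove the same statement. Two small points to tighten: the application of \corref{G3} to the difference field needs $\kappa$, $d\kappa=(\sqrt{a}-\sqrt{b})^{2}d\lambda$, to be non-atomic, and your parenthetical only establishes non-atomicity of $\mu+\nu$ --- finish it with the standard fact that a measure absolutely continuous with respect to a non-atomic $\sigma$-finite measure is itself non-atomic, which covers $\kappa\leq\mu+\nu$; and your remark that the proof of \corref{G3} transfers to the joint realization is right, but it silently relies on the vanishing of the off-diagonal covariance terms (independence of the variables over disjoint sets within each single field), which does hold here and deserves a sentence.
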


\begin{proof}
The details follow those in the proof of \corref{G3} above; and we
also make use of the theory of sigma-Hilbert spaces (universal Hilbert
spaces); see e.g., \cite{MR0282379,zbMATH06897817,2018arXiv180506063J}.
\end{proof}
\begin{cor}
Let $\left(M,\mathscr{B},\mu\right)$, $X^{\left(\mu\right)}$, and
$T_{\mu}:L^{2}\left(\mu\right)\longrightarrow L^{2}(\mathbb{P}^{\left(\mu\right)})$
be as in \propref{G1}, then the adjoint 
\[
T_{\mu}^{*}:L^{2}(\Omega,\mathbb{P}^{\left(\mu\right)})\longrightarrow L^{2}\left(M,\mu\right)
\]
is specified as follows: 

Let $n\in\mathbb{N}$, and let $p\left(x_{1},x_{2},\cdots,x_{n}\right)$
be a polynomial on $\mathbb{R}^{n}$. For 
\begin{equation}
F:=p\left(X_{\varphi_{1}}^{\left(\mu\right)},\cdots,X_{\varphi_{n}}^{\left(\mu\right)}\right),\quad\left\{ \varphi_{i}\right\} _{1}^{n},\:\varphi_{i}\in L^{2}\left(\mu\right);\label{eq:G16}
\end{equation}
set 
\begin{equation}
D\left(F\right):=\sum_{i=1}^{n}\frac{\partial p}{\partial x_{i}}\left(X_{\varphi_{1}}^{\left(\mu\right)},\cdots,X_{\varphi_{n}}^{\left(\mu\right)}\right)\otimes\varphi_{i}.\label{eq:G17}
\end{equation}
Then we get the adjoint $T_{\mu}^{*}$ of the isometry $T_{\mu}$
expressed as: 
\begin{equation}
T_{\mu}^{*}\left(F\right)=\sum_{i=1}^{n}\mathbb{E}_{\mu}\left(\frac{\partial p}{\partial x_{i}}\left(X_{\varphi_{1}}^{\left(\mu\right)},\cdots,X_{\varphi_{n}}^{\left(\mu\right)}\right)\right)\varphi_{i}.\label{eq:G18}
\end{equation}
(Note that the right-hand side in (\ref{eq:G18}) is in $L^{2}\left(\mu\right)$.)
\end{cor}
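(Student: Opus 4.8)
The plan is to combine the definition of the Hilbert-space adjoint with the Gaussian integration-by-parts (Stein) identity. By \corref{G2}, $T_{\mu}$ is an isometry of $L^{2}\left(\mu\right)$ into $L^{2}(\Omega,\mathbb{P}^{\left(\mu\right)})$ with $T_{\mu}\psi=X_{\psi}^{\left(\mu\right)}$; being bounded, its adjoint $T_{\mu}^{*}$ is everywhere defined, and for every $\psi\in L^{2}\left(\mu\right)$,
\[
\left\langle T_{\mu}^{*}F,\psi\right\rangle _{L^{2}\left(\mu\right)}=\left\langle F,T_{\mu}\psi\right\rangle _{L^{2}(\mathbb{P}^{\left(\mu\right)})}=\mathbb{E}_{\mu}\left(F\,X_{\psi}^{\left(\mu\right)}\right).
\]
Since $F=p(X_{\varphi_{1}}^{\left(\mu\right)},\dots,X_{\varphi_{n}}^{\left(\mu\right)})$ is a polynomial in centered jointly Gaussian variables, it lies in every $L^{q}(\mathbb{P}^{\left(\mu\right)})$, so all expectations below are finite, and the task reduces to evaluating $\mathbb{E}_{\mu}(F\,X_{\psi}^{\left(\mu\right)})$ and recognizing it as an $L^{2}\left(\mu\right)$-pairing against $\psi$.

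To do so, I would first record, via the isometry property of \corref{G2} together with polarization, that the vector $(X_{\psi}^{\left(\mu\right)},X_{\varphi_{1}}^{\left(\mu\right)},\dots,X_{\varphi_{n}}^{\left(\mu\right)})$ is centered Gaussian on $\mathbb{R}^{n+1}$ with covariances $\mathbb{E}_{\mu}(X_{\psi}^{\left(\mu\right)}X_{\varphi_{i}}^{\left(\mu\right)})=\langle\psi,\varphi_{i}\rangle_{L^{2}\left(\mu\right)}$ and $\mathbb{E}_{\mu}(X_{\varphi_{i}}^{\left(\mu\right)}X_{\varphi_{j}}^{\left(\mu\right)})=\langle\varphi_{i},\varphi_{j}\rangle_{L^{2}\left(\mu\right)}$. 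Then I would apply the Gaussian integration-by-parts formula: for a centered jointly Gaussian vector $(Y,Z_{1},\dots,Z_{n})$ and a polynomial $p$,
\[
\mathbb{E}\left(Y\,p\left(Z_{1},\dots,Z_{n}\right)\right)=\sum_{i=1}^{n}\operatorname{Cov}\left(Y,Z_{i}\right)\,\mathbb{E}\left(\frac{\partial p}{\partial x_{i}}\left(Z_{1},\dots,Z_{n}\right)\right),
\]
with $Y=X_{\psi}^{\left(\mu\right)}$ and $Z_{i}=X_{\varphi_{i}}^{\left(\mu\right)}$, which gives
\begin{align*}
\mathbb{E}_{\mu}\left(F\,X_{\psi}^{\left(\mu\right)}\right) & =\sum_{i=1}^{n}\left\langle \psi,\varphi_{i}\right\rangle _{L^{2}\left(\mu\right)}\,\mathbb{E}_{\mu}\left(\frac{\partial p}{\partial x_{i}}(X_{\varphi_{1}}^{\left(\mu\right)},\dots,X_{\varphi_{n}}^{\left(\mu\right)})\right)\\
 & =\left\langle \psi,\;\sum_{i=1}^{n}\mathbb{E}_{\mu}\!\left(\frac{\partial p}{\partial x_{i}}(\cdots)\right)\varphi_{i}\right\rangle _{L^{2}\left(\mu\right)}.
\end{align*}

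To finish, observe that the numbers $\mathbb{E}_{\mu}(\partial p/\partial x_{i}(\cdots))$ are finite real constants, so $g:=\sum_{i=1}^{n}\mathbb{E}_{\mu}(\partial p/\partial x_{i}(\cdots))\,\varphi_{i}$ is a finite linear combination of elements of $L^{2}\left(\mu\right)$ and hence lies in $L^{2}\left(\mu\right)$ --- exactly the parenthetical claim in the statement. Since $\langle T_{\mu}^{*}F,\psi\rangle_{L^{2}\left(\mu\right)}=\langle g,\psi\rangle_{L^{2}\left(\mu\right)}$ for all $\psi\in L^{2}\left(\mu\right)$, we get $T_{\mu}^{*}F=g$, which is precisely (\ref{eq:G18}); equivalently, with $D$ the derivative introduced in (\ref{eq:G17}), this reads $T_{\mu}^{*}F=(\mathbb{E}_{\mu}\otimes\mathrm{id})\,D(F)$. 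One then extends the formula to all of $L^{2}(\Omega,\mathbb{P}^{\left(\mu\right)})$ by continuity of $T_{\mu}^{*}$ and density of the functionals of the form (\ref{eq:G16}).

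The one point requiring care is the Gaussian integration-by-parts identity in the possibly \emph{degenerate} case: the covariance matrix of $(X_{\psi}^{\left(\mu\right)},X_{\varphi_{i}}^{\left(\mu\right)})$ need not be invertible --- for instance when the $\varphi_{i}$ are linearly dependent in $L^{2}\left(\mu\right)$, or $\psi\in\operatorname{span}\{\varphi_{i}\}$. I would settle this by writing the Gaussian vector as $A\xi$ with $\xi$ a standard Gaussian in some $\mathbb{R}^{m}$ and $AA^{\top}$ its covariance, then applying the one-dimensional Gaussian integration-by-parts coordinatewise in $\xi$ and the chain rule; this yields the identity for an arbitrary matrix $A$, degenerate or not. (Alternatively, verify it for monomials $p$ via the Wick/Isserlis pairing formula and extend by linearity in $p$.) The remaining ingredients --- boundedness of $T_{\mu}$, finiteness of the Gaussian moments, and the density statement --- are routine.
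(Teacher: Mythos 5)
Your proposal is correct and follows essentially the same route as the paper's own argument: characterize the adjoint by duality, $\left\langle T_{\mu}^{*}F,\psi\right\rangle _{L^{2}\left(\mu\right)}=\mathbb{E}_{\mu}\big(F\,X_{\psi}^{\left(\mu\right)}\big)$, and evaluate the right-hand side by Gaussian integration by parts for the joint law of $\big(X_{\psi}^{\left(\mu\right)},X_{\varphi_{1}}^{\left(\mu\right)},\dots,X_{\varphi_{n}}^{\left(\mu\right)}\big)$, which is exactly the paper's key identity (\ref{eq:G421}). The only real difference is that the paper's sketch manipulates the explicit Gaussian density with $C^{-1}$ (tacitly assuming a nondegenerate covariance matrix), whereas you also dispose of the degenerate case by writing the Gaussian vector as $A\xi$ with $\xi$ standard --- a sensible refinement of the same method rather than a different approach.
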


\begin{proof}[Proof sketch]
Recall that 
\[
T_{\mu}\psi:=X_{\psi}^{\left(\mu\right)}:L^{2}\left(\mu\right)\longrightarrow L^{2}(\Omega,\mathbb{P}^{\left(\mu\right)})
\]
as in (\ref{eq:G10}), and 
\begin{equation}
X_{\psi}^{\left(\mu\right)}=\int_{M}\psi\,dX^{\left(\mu\right)}\label{eq:G19}
\end{equation}
is the stochastic integral, where $dX^{\left(\mu\right)}$ denotes
the It\^{o}-Wiener integral. 

The arguments combine the results in the present section, and standard
facts regarding the Malliavin derivative. (See, e.g., \cite{MR3630401,MR1952822,MR2382071,MR3501849}.)
Recall that the operator
\begin{equation}
D:L^{2}(\Omega,\mathbb{P}^{\left(\mu\right)})\longrightarrow L^{2}(\Omega,\mathbb{P}^{\left(\mu\right)})\otimes L^{2}\left(\mu\right)
\end{equation}
from (\ref{eq:G17}) is the Malliavin derivative corresponding to
the Gaussian field (\ref{eq:G19}); see also \corref{G2}.

In the arguments below, we restrict consideration to the case of real
valued functions. We shall also make use of the known fact that the
space of functions $F$ in (\ref{eq:G16}) is dense in $L^{2}(\Omega,\mathbb{P}^{\left(\mu\right)})$
as $n\in\mathbb{N}$, polynomials $p\left(x_{1},\cdots,x_{n}\right)$,
and $\left\{ \varphi_{i}\right\} _{1}^{n}$ vary, $\varphi_{i}\in L^{2}\left(\mu\right)$. 

The key step in the verification of the formula (\ref{eq:G18}) for
$T^{*}$, form $L^{2}(\Omega,\mathbb{P}^{\left(\mu\right)})$ onto
$L^{2}\left(\mu\right)$, is the following assertion: Let $F$ and
$X_{\psi}^{\left(\mu\right)}$, $\psi\in L^{2}\left(\mu\right)$,
be as stated; then 
\begin{align}
\left\langle F,X_{\psi}^{\left(\mu\right)}\right\rangle _{L^{2}(\Omega,\mathbb{P}^{\left(\mu\right)})} & =\mathbb{E}_{\mu}\left(FX_{\psi}^{\left(\mu\right)}\right)\nonumber \\
 & =\sum_{i=1}^{n}\mathbb{E}_{\mu}\left(\frac{\partial p}{\partial x_{i}}\left(X_{\varphi_{1}}^{\left(\mu\right)},\cdots,X_{\varphi_{n}}^{\left(\mu\right)}\right)\right)\left\langle \varphi_{i},\psi\right\rangle _{L^{2}\left(\mu\right)}.\label{eq:G421}
\end{align}
But (\ref{eq:G421}) in turn follows from the basic formula for the
finite-dimensional Gaussian distributions $g^{\left(n\right)}\left(x\right)$
in \propref{G1} above. We have: 
\begin{eqnarray*}
 &  & \int_{\mathbb{R}^{n}}\frac{\partial p}{\partial x_{i}}\left(x_{1},\cdots,x_{n}\right)g^{\left(n\right)}\left(x_{1},\cdots,x_{n}\right)d^{\left(n\right)}x\\
 & = & \int_{\mathbb{R}^{n}}x_{i}p\left(x_{1},\cdots,x_{n}\right)g^{\left(n\right)}\left(x_{1},\cdots,x_{n}\right)d^{\left(n\right)}x
\end{eqnarray*}
where $d^{\left(n\right)}x=dx_{1}dx_{2}\cdots dx_{n}$ is the standard
Lebesgue measure on $\mathbb{R}^{n}$. 

The general case is as follows: Set $C=\left[\mu\left(A_{i}\cap A_{j}\right)\right]_{i,j}$,
the covariance matrix from (\ref{eq:G3}), and 
\[
g\left(x\right):=g^{\left(A_{i}\right)}\left(x\right)=\left(\det C\right)^{-n/2}e^{-\frac{1}{2}\left\langle x,C^{-1}x\right\rangle _{\mathbb{R}^{n}}};
\]
then 
\begin{eqnarray*}
 &  & \mathbb{E}_{\mu}\left(\sum\nolimits _{i}\frac{\partial p}{\partial x_{i}}\left\langle \varphi_{i},\psi\right\rangle _{L^{2}\left(\mu\right)}\right)\\
 & = & \int_{\mathbb{R}^{n}}\sum\nolimits _{i}\frac{\partial p}{\partial x_{i}}\left(x\right)g\left(x\right)\left\langle \varphi_{i},\psi\right\rangle _{L^{2}\left(\mu\right)}dx^{\left(n\right)}\\
 & = & \int_{\mathbb{R}^{n}}p\left(x\right)\left(\sum\nolimits _{i,j}C_{ij}^{-1}x_{j}\right)g\left(x\right)\left\langle \varphi_{i},\psi\right\rangle _{L^{2}\left(\mu\right)}dx^{\left(n\right)}\\
 & = & \mathbb{E}_{\mu}\left(p\,T_{\mu}\left(\sum\nolimits _{i,j}C_{ij}^{-1}\varphi_{j}\left\langle \varphi_{i},\psi\right\rangle _{L^{2}\left(\mu\right)}\right)\right),
\end{eqnarray*}
where 
\[
\psi\longmapsto\sum_{i,j}C_{ij}^{-1}\varphi_{j}\left\langle \varphi_{i},\psi\right\rangle _{L^{2}\left(\mu\right)}
\]
is the projection from $\psi$ onto $span\left\{ \varphi_{i}\right\} $. 

Recall the correspondence $\left(p,\varphi_{1},\cdots,\varphi_{n}\right)\longleftrightarrow F$
in (\ref{eq:G16}), where $p=p\left(x_{1},\cdots,x_{n}\right)$, $x=\left(x_{1},\cdots,x_{n}\right)\in\mathbb{R}^{n}$.
The random variable $F$ has the Wiener-chaos representation in (\ref{eq:G16}). 
\end{proof}

\begin{cor}
Let $\left(M,\mathscr{B},\mu\right)$ be a $\sigma$-finite measure,
and let $\{X_{\varphi}^{\left(\mu\right)}\mid\varphi\in L^{2}\left(\mu\right)\}$
be the corresponding Gaussian field. We then have the following covariance
relations for $(X_{\varphi}^{\left(\mu\right)})^{m}$ corresponding
to the even and odd values of $m\in\mathbb{N}$: 
\begin{align*}
\mathbb{E}_{\mu}\left(\left(X_{\varphi}^{\left(\mu\right)}\right)^{2n}X_{\psi}^{\left(\mu\right)}\right) & =0,\quad\forall\varphi,\psi\in L^{2}\left(\mu\right);\;\text{and}\\
\mathbb{E}_{\mu}\left(\left(X_{\varphi}^{\left(\mu\right)}\right)^{2n+1}X_{\psi}^{\left(\mu\right)}\right) & =\left\langle \varphi,\psi\right\rangle _{L^{2}\left(\mu\right)}\left\Vert \varphi\right\Vert _{L^{2}\left(\mu\right)}^{2n}\left(2n+1\right)!!
\end{align*}
where
\[
\left(2n+1\right)!!=\left(2n+1\right)\left(2n-1\right)\cdots5\cdot3=\frac{\left(2\left(n+1\right)\right)!}{2^{n+1}\left(n+1\right)!}.
\]
\end{cor}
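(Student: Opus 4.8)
The plan is to reduce both identities to the elementary moment formulas for a single scalar Gaussian, using the integration-by-parts identity established in the preceding corollary (the one computing $T_{\mu}^{*}$ via the Malliavin derivative). Concretely, I would apply the key step of that corollary, namely $\mathbb{E}_{\mu}\big(F\,X_{\psi}^{(\mu)}\big)=\sum_{i}\mathbb{E}_{\mu}\big(\tfrac{\partial p}{\partial x_{i}}(X_{\varphi_{1}}^{(\mu)},\dots,X_{\varphi_{n}}^{(\mu)})\big)\langle\varphi_{i},\psi\rangle_{L^{2}(\mu)}$, in the special case $n=1$ with the single test function $\varphi_{1}=\varphi$ and the one-variable polynomial $p(x)=x^{m}$. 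Since $(X_{\varphi}^{(\mu)})^{m}=p(X_{\varphi}^{(\mu)})$ and $p'(x)=m\,x^{m-1}$, this gives at once
\[
\mathbb{E}_{\mu}\big((X_{\varphi}^{(\mu)})^{m}\,X_{\psi}^{(\mu)}\big)=m\,\langle\varphi,\psi\rangle_{L^{2}(\mu)}\,\mathbb{E}_{\mu}\big((X_{\varphi}^{(\mu)})^{m-1}\big).
\]

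Next I would invoke that, by \corref{G2} (see (\ref{eq:G10})), $X_{\varphi}^{(\mu)}\sim N\big(0,\|\varphi\|_{L^{2}(\mu)}^{2}\big)$, so that its moments are the classical Gaussian ones: $\mathbb{E}_{\mu}\big((X_{\varphi}^{(\mu)})^{k}\big)=0$ when $k$ is odd, and $\mathbb{E}_{\mu}\big((X_{\varphi}^{(\mu)})^{2j}\big)=\|\varphi\|_{L^{2}(\mu)}^{2j}\,(2j-1)!!$ when $k=2j$ is even (obtained, e.g., by differentiating $\mathbb{E}_{\mu}(e^{tX_{\varphi}^{(\mu)}})=e^{t^{2}\|\varphi\|_{L^{2}(\mu)}^{2}/2}$, or by the same integration by parts). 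For $m=2n$ this yields $\mathbb{E}_{\mu}\big((X_{\varphi}^{(\mu)})^{m-1}\big)=\mathbb{E}_{\mu}\big((X_{\varphi}^{(\mu)})^{2n-1}\big)=0$, which is the first asserted identity. For $m=2n+1$ it yields $\mathbb{E}_{\mu}\big((X_{\varphi}^{(\mu)})^{2n}\big)=\|\varphi\|_{L^{2}(\mu)}^{2n}\,(2n-1)!!$, whence
\[
\mathbb{E}_{\mu}\big((X_{\varphi}^{(\mu)})^{2n+1}\,X_{\psi}^{(\mu)}\big)=(2n+1)\,\langle\varphi,\psi\rangle_{L^{2}(\mu)}\,\|\varphi\|_{L^{2}(\mu)}^{2n}\,(2n-1)!!=\langle\varphi,\psi\rangle_{L^{2}(\mu)}\,\|\varphi\|_{L^{2}(\mu)}^{2n}\,(2n+1)!!,
\]
using $(2n+1)(2n-1)!!=(2n+1)!!$; the closed form $(2n+1)!!=(2(n+1))!/(2^{n+1}(n+1)!)$ is the standard rewriting of the double factorial.

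A fully self-contained alternative, bypassing the Malliavin formula, is to note that $(U,V):=(X_{\varphi}^{(\mu)},X_{\psi}^{(\mu)})$ is a centered bivariate Gaussian with $\mathrm{Var}(U)=\|\varphi\|_{L^{2}(\mu)}^{2}$ and $\mathrm{Cov}(U,V)=\langle\varphi,\psi\rangle_{L^{2}(\mu)}$ (all three by \corref{G2}); the even case is then immediate from the symmetry $(U,V)\mapsto(-U,-V)$ of the joint law, and for the odd case one writes $V=\alpha U+W$ with $\alpha=\langle\varphi,\psi\rangle_{L^{2}(\mu)}/\|\varphi\|_{L^{2}(\mu)}^{2}$ and $W$ Gaussian independent of $U$, so that $\mathbb{E}_{\mu}(U^{2n+1}V)=\alpha\,\mathbb{E}_{\mu}(U^{2n+2})=\alpha\,\|\varphi\|_{L^{2}(\mu)}^{2n+2}(2n+1)!!$, again the claimed value (the degenerate case $\varphi=0$, i.e.\ $U=0$ a.s., being trivial). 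There is no genuine obstacle here: both routes take only a few lines. The only points requiring a word of care are that the integration-by-parts identity of the preceding corollary does apply to $p(x)=x^{m}$ (it does, that corollary being stated for arbitrary polynomials) and, in the second route, the degenerate case $\|\varphi\|_{L^{2}(\mu)}=0$, handled trivially. I would present the first, two-line derivation as the main argument and mention the bivariate computation as a remark.
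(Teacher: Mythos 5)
Your main argument is correct and is essentially the paper's own proof: both apply the integration-by-parts identity (\ref{eq:G421}) with $n=1$ and $p(x)=x^{m}$, and then close the recursion using the scalar Gaussian moments of $X_{\varphi}^{(\mu)}\sim N\bigl(0,\|\varphi\|_{L^{2}(\mu)}^{2}\bigr)$ (the paper phrases this as an induction, which amounts to the same thing). The bivariate-Gaussian remark is a fine optional alternative but not needed.
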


\begin{proof}
This is immediate from (\ref{eq:G421}), and an induction argument.
Take $n=1$, and $p\left(x\right)=x^{m}$; starting with 
\[
\mathbb{E}_{\mu}\left(\left(X_{\varphi}^{\left(\mu\right)}\right)^{2}X_{\psi}^{\left(\mu\right)}\right)=2\mathbb{E}_{\mu}\left(X_{\varphi}^{\left(\mu\right)}\right)\left\langle \varphi,\psi\right\rangle _{L^{2}\left(\mu\right)}=0
\]
and 
\[
\mathbb{E}_{\mu}\left(\left(X_{\varphi}^{\left(\mu\right)}\right)^{3}X_{\psi}^{\left(\mu\right)}\right)=3\underset{{\scriptscriptstyle \left\Vert \varphi\right\Vert _{L^{2}\left(\mu\right)}^{2}}}{\underbrace{\mathbb{E}_{\mu}\left(\left(X_{\varphi}^{\left(\mu\right)}\right)^{2}\right)}}\left\langle \varphi,\psi\right\rangle _{L^{2}\left(\mu\right)}.
\]
\end{proof}

\subsection{It\^{o} calculus}

In this section we discuss properties of the Gaussian process corresponding
to the Hilbert space factorizations from the setting in \thmref{T1}.

The initial setting is a fixed $\sigma$-finite measure space $\left(M,\mathscr{B},\mu\right)$
with corresponding 
\begin{equation}
\mathscr{B}_{fin}=\left\{ A\in\mathscr{B}\mid\mu\left(A\right)<\infty\right\} .\label{eq:GI1}
\end{equation}
As in \secref{SA}, we shall study positive definite (p.d.) functions
$\beta$
\begin{equation}
\mathscr{B}_{fin}\times\mathscr{B}_{fin}\xrightarrow{\quad\beta\quad}\mathbb{R};
\end{equation}
i.e., it is assumed that  $\forall n\in\mathbb{N}$, $\forall\left\{ c_{i}\right\} _{1}^{n}$,
$\left\{ A_{i}\right\} _{1}^{n}$, $c_{i}\in\mathbb{R}$, $A_{i}\in\mathscr{B}_{fin}$,
we have 
\begin{equation}
\sum_{i}\sum_{j}c_{i}c_{j}\beta\left(A_{i},A_{j}\right)\geq0.\label{eq:GI3}
\end{equation}
Then let $X=X^{\left(\beta\right)}$ be the Gaussian process with
\begin{equation}
\left\{ \begin{split} & \mathbb{E}\left(X_{A}\right)=0,\;\text{and}\\
 & \mathbb{E}\left(X_{A}X_{B}\right)=\beta\left(A,B\right),\;\text{for }\text{\ensuremath{\forall A,B\in\mathscr{B}_{fin}}}.
\end{split}
\right.\label{eq:GI25}
\end{equation}

\begin{thm}
\label{thm:G7}Let $\left(M,\mathscr{B},\mu\right)$ be as above,
and let $\beta$ be a corresponding p.d. function, i.e., we have (\ref{eq:GI1})--(\ref{eq:GI3})
satisfied. 

Now suppose there is a Hilbert space $\mathscr{H}$ such that the
conditions in \thmref{T1} are satisfied. 

Then the Gaussian process $X=X^{\left(\beta\right)}$ admits an It\^{o}-integral
representation: Let $X^{\left(\mu\right)}$ denote the Gaussian field
from \propref{G1} and \corref{G2}. Then there is a function $l$,
as follows: 
\begin{equation}
\left\{ \begin{split}\mathscr{B}_{fin} & \xrightarrow{\quad l\quad}L^{2}\left(M,\mu\right)\\
\overset{\rotatebox{90}{\text{\ensuremath{\in}}}}{A} & \xmapsto{\quad\phantom{l}\quad}\overset{\rotatebox{90}{\text{\ensuremath{\in}}}}{l_{A}}
\end{split}
\right.
\end{equation}
such that 
\begin{equation}
X_{A}=\int_{M}l_{A}\left(x\right)dX_{x}^{\left(\mu\right)},\quad\forall A\in\mathscr{B}_{fin};\label{eq:GI27}
\end{equation}
where (\ref{eq:GI27}) is the It\^{o}-integral from \corref{G2}.
\end{thm}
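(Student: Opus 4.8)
The plan is to realize $X^{(\beta)}$ as a first Wiener chaos It\^{o}--Wiener integral against the field $X^{(\mu)}$ of \propref{G1}. Since the law of a centered Gaussian process is determined by its covariance, it suffices to produce, on the probability space $\left(\Omega,\mathbb{P}^{(\mu)}\right)$, a centered jointly Gaussian family $\{X_A\}_{A\in\mathscr{B}_{fin}}$ with $\mathbb{E}(X_AX_B)=\beta(A,B)$ in which each $X_A$ has the form $\int_M l_A\,dX^{(\mu)}$ for some $l_A\in L^2(M,\mu)$. The real content is thus to factor the kernel $\beta$ \emph{through} $L^2(\mu)$.

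For that I would use the fact, recorded in the proof of \thmref{T1}, that the hypothesis on $\mathscr{H}$ is precisely closability of the inclusion operator $T:L^2(\mu)\to\mathscr{H}$ with dense domain $\mathscr{D}_{fin}(\mu)$ and $T\chi_A=\chi_A$. Let $\overline{T}$ be its closure and $\overline{T}=U\,|\overline{T}|$ the polar decomposition, where $|\overline{T}|=(\overline{T}^{*}\overline{T})^{1/2}$ is a positive selfadjoint operator on $L^2(\mu)$ with $\mathrm{dom}\,|\overline{T}|=\mathrm{dom}\,\overline{T}$, and $U$ is a partial isometry, isometric on $\overline{\mathrm{ran}\,|\overline{T}|}$, with $U|\overline{T}|=\overline{T}$ on $\mathrm{dom}\,\overline{T}$. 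Since $\chi_A\in\mathscr{D}_{fin}(\mu)\subset\mathrm{dom}\,\overline{T}=\mathrm{dom}\,|\overline{T}|$ for every $A\in\mathscr{B}_{fin}$, I would define
\[ l_A:=|\overline{T}|\,\chi_A\in L^2(M,\mu),\qquad A\in\mathscr{B}_{fin}, \]
which gives the map $l:\mathscr{B}_{fin}\to L^2(M,\mu)$ of the statement. Using the properties of $U$ together with (\ref{eq:S4}) one then checks
\[ \langle l_A,l_B\rangle_{L^2(\mu)}=\langle|\overline{T}|\chi_A,\,|\overline{T}|\chi_B\rangle_{L^2(\mu)}=\langle\overline{T}\chi_A,\,\overline{T}\chi_B\rangle_{\mathscr{H}}=\langle\chi_A,\chi_B\rangle_{\mathscr{H}}=\beta(A,B) \]
for all $A,B\in\mathscr{B}_{fin}$; equivalently, $\varphi\mapsto\|\overline{T}\varphi\|_{\mathscr{H}}^{2}$ is the closed quadratic form of $\overline{T}^{*}\overline{T}$, so $\|\,|\overline{T}|\chi_A\|_{L^2(\mu)}=\|\overline{T}\chi_A\|_{\mathscr{H}}$, and one polarizes.

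Next I would transport this to the first Wiener chaos via the It\^{o}--Wiener isometry $T_\mu:L^2(\mu)\to L^2\!\left(\Omega,\mathbb{P}^{(\mu)}\right)$ of \corref{G2}, which satisfies $T_\mu(\varphi)=X_\varphi^{(\mu)}=\int_M\varphi\,dX^{(\mu)}$ and $\mathbb{E}_\mu\!\left(X_{\varphi_1}^{(\mu)}X_{\varphi_2}^{(\mu)}\right)=\langle\varphi_1,\varphi_2\rangle_{L^2(\mu)}$. Setting $X_A:=X_{l_A}^{(\mu)}=\int_M l_A(x)\,dX_x^{(\mu)}$, each $X_A$ lies in the first chaos, so $X_A\sim N\!\left(0,\|l_A\|_{L^2(\mu)}^{2}\right)$ and $\sum_i c_iX_{A_i}=X^{(\mu)}_{\sum_i c_i l_{A_i}}$ is again centered Gaussian; hence $\{X_A\}_{A\in\mathscr{B}_{fin}}$ is a centered jointly Gaussian family, and by the factorization above $\mathbb{E}(X_AX_B)=\langle l_A,l_B\rangle_{L^2(\mu)}=\beta(A,B)$. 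Therefore $\{X_A\}$ is a realization of $X^{(\beta)}$ satisfying (\ref{eq:GI27}).

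The single real obstacle is the factorization step: a general p.d.\ kernel on $\mathscr{B}_{fin}\times\mathscr{B}_{fin}$ factors only through its abstract RKHS, not through $L^2(\mu)$, and it is exactly the closability hypothesis carried over from \thmref{T1} that turns $\overline{T}^{*}\overline{T}$, and hence $|\overline{T}|=(\overline{T}^{*}\overline{T})^{1/2}$, into a genuine selfadjoint operator on $L^2(\mu)$ with $\chi_A$ in its domain. A small but essential point is to work with the square root $|\overline{T}|$ rather than with $\overline{T}^{*}\overline{T}$ itself, since $\chi_A$ is only known to lie in $\mathrm{dom}\,\overline{T}=\mathrm{dom}\,|\overline{T}|$; once the factorization is in hand, the rest is a routine transcription through the isometries of \corref{G2}.
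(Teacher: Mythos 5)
Your proposal is correct and follows essentially the same route as the paper: the paper's Lemma \ref{lem:G8} produces exactly your factorization $l_{A}=\left(T^{*}T\right)^{1/2}\chi_{A}$ (your $|\overline{T}|\chi_{A}$), and the theorem is then proved, as you do, by setting $X_{A}:=\int_{M}l_{A}\,dX^{\left(\mu\right)}$ and checking the covariance through the It\^{o}--Wiener isometry of Corollary \ref{cor:G2}. Your use of the polar decomposition and the closed quadratic form to justify $\left\Vert \,|\overline{T}|\chi_{A}\right\Vert _{L^{2}\left(\mu\right)}=\left\Vert \overline{T}\chi_{A}\right\Vert _{\mathscr{H}}$ is merely a more careful packaging of the same step (it avoids tacitly assuming $\chi_{A}\in dom\left(T^{*}T\right)$), not a different argument.
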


We shall first need a lemma which may be of independent interest. 
\begin{lem}
\label{lem:G8}With the conditions on $\left(\beta,\mu\right)$ as
in the statement of \thmref{T1} and \thmref{G7}, we get existence
of an $L^{2}\left(\mu\right)$-factorization for the initially given
p.d. function $\beta$ \textup{(}see (\ref{eq:GI1})--(\ref{eq:GI3})\textup{)}.
Specifically, $\beta$ admits a representation: 
\begin{equation}
\beta\left(A,B\right)=\int_{M}l_{A}\left(x\right)l_{B}\left(x\right)d\mu\left(x\right),\quad\forall A,B\in\mathscr{B}_{fin}\label{eq:GI28}
\end{equation}
with $l_{A}\in L^{2}\left(\mu\right)$, $\forall A\in\mathscr{B}_{fin}$. 
\end{lem}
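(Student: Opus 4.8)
The plan is to exploit the operator-theoretic reformulation already used in the proof of \thmref{T1}. There the hypothesis on the pair $\left(\beta,\mathscr{H}\right)$ was seen to be equivalent to the assertion that the inclusion operator $T\colon L^{2}\left(\mu\right)\to\mathscr{H}$, with dense domain $\mathscr{D}_{fin}\left(\mu\right)$ and $T\chi_{A}=\chi_{A}$, is \emph{closable}. Let $\overline{T}$ denote its closure, a closed densely defined operator from $L^{2}\left(\mu\right)$ into $\mathscr{H}$; then $\mathscr{D}_{fin}\left(\mu\right)\subset dom\left(T\right)\subset dom\left(\overline{T}\right)$, and $\overline{T}\chi_{A}=\chi_{A}$ (viewed as an element of $\mathscr{H}$) for every $A\in\mathscr{B}_{fin}$.

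Next I would pass to the polar decomposition $\overline{T}=V\,|\overline{T}|$, where $|\overline{T}|:=\bigl(\overline{T}^{*}\overline{T}\bigr)^{1/2}$ is a positive selfadjoint operator on $L^{2}\left(\mu\right)$ and $V\colon L^{2}\left(\mu\right)\to\mathscr{H}$ is a partial isometry whose initial space is the closure of $ran\bigl(|\overline{T}|\bigr)$. The structural fact I rely on is $dom\bigl(|\overline{T}|\bigr)=dom\bigl(\overline{T}\bigr)$, so that $\chi_{A}\in dom\bigl(|\overline{T}|\bigr)$ for all $A\in\mathscr{B}_{fin}$, and I set
\begin{equation}
l_{A}:=|\overline{T}|\,\chi_{A}\in L^{2}\left(\mu\right),\qquad A\in\mathscr{B}_{fin}.
\end{equation}

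It then remains to verify (\ref{eq:GI28}). Since $ran\bigl(|\overline{T}|\bigr)$ lies in the initial space of $V$, we have $V^{*}V\,|\overline{T}|\chi_{B}=|\overline{T}|\chi_{B}$, whence
\begin{align*}
\int_{M}l_{A}\left(x\right)l_{B}\left(x\right)\,d\mu\left(x\right) & =\bigl\langle|\overline{T}|\chi_{A},\,|\overline{T}|\chi_{B}\bigr\rangle_{L^{2}\left(\mu\right)}=\bigl\langle V|\overline{T}|\chi_{A},\,V|\overline{T}|\chi_{B}\bigr\rangle_{\mathscr{H}}\\
 & =\bigl\langle\overline{T}\chi_{A},\,\overline{T}\chi_{B}\bigr\rangle_{\mathscr{H}}=\bigl\langle\chi_{A},\chi_{B}\bigr\rangle_{\mathscr{H}}=\beta\left(A,B\right),
\end{align*}
where the last two equalities use $\overline{T}\chi_{A}=\chi_{A}$ and (\ref{eq:S4}). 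This is precisely (\ref{eq:GI28}), with $l_{A}\in L^{2}\left(\mu\right)$ as required.

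The only point demanding care is the one just emphasized: one must work with $dom\bigl(|\overline{T}|\bigr)=dom\bigl(\overline{T}\bigr)$ rather than with the typically strictly smaller $dom\bigl(\overline{T}^{*}\overline{T}\bigr)$, on which $\chi_{A}$ need not lie; this is exactly what guarantees that each $l_{A}$ is a bona fide element of $L^{2}\left(\mu\right)$. Everything else is routine functional calculus. I note also that the same construction produces a linear map $\mathscr{D}_{fin}\left(\mu\right)\ni\varphi\longmapsto|\overline{T}|\varphi\in L^{2}\left(\mu\right)$ extending $A\mapsto l_{A}$, which is the form in which \thmref{G7} will invoke this lemma.
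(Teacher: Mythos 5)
Your proof is correct and is essentially the paper's own argument: the paper likewise takes the closed inclusion operator $T\colon L^{2}\left(\mu\right)\to\mathscr{H}$ furnished by Theorem \ref{thm:T1}, defines $l_{A}:=\left(T^{*}T\right)^{1/2}\chi_{A}$, and verifies (\ref{eq:GI28}) by moving $T^{*}$ across the inner product. Your use of the polar decomposition $\overline{T}=V\,|\overline{T}|$ is just a slightly more careful rendering of the same computation, with the merit that it makes explicit why $\chi_{A}\in dom\bigl(|\overline{T}|\bigr)=dom\bigl(\overline{T}\bigr)$ suffices, whereas the paper's displayed step $\langle T\chi_{A},T\chi_{B}\rangle_{\mathscr{H}}=\langle T^{*}T\chi_{A},\chi_{B}\rangle_{L^{2}\left(\mu\right)}$ tacitly presumes $\chi_{A}\in dom\left(T^{*}T\right)$.
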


\begin{proof}[Proof of the lemma]
 An application of \thmref{T1} yields a closed linear operator $T$
from $L^{2}\left(\mu\right)$ into $\mathscr{H}$, having $\mathscr{D}_{fin}\left(\mu\right)\subset L^{2}\left(\mu\right)$
as dense domain. Moreover, we have: 
\begin{eqnarray*}
\beta\left(A,B\right) & \underset{\text{by \ensuremath{\left(\ref{eq:S4}\right)}}}{=} & \left\langle \chi_{A},\chi_{B}\right\rangle _{\mathscr{H}}\\
 & \underset{\text{by \ensuremath{\left(\ref{eq:T3}\right)}}}{=} & \left\langle T\left(\chi_{A}\right),T\left(\chi_{B}\right)\right\rangle _{\mathscr{H}}\\
 & = & \left\langle T^{*}T\chi_{A},\chi_{B}\right\rangle _{L^{2}\left(\mu\right)}\\
 & \underset{{\scriptscriptstyle \stackrel{\text{\text{since \ensuremath{T^{*}T} is}}}{\text{selfadjoint}}}}{=} & \left\langle \left(\left(T^{*}T\right)^{\frac{1}{2}}\right)^{2}\chi_{A},\chi_{B}\right\rangle _{L^{2}\left(\mu\right)}\\
 & = & \left\langle \left(T^{*}T\right)^{\frac{1}{2}}\chi_{A},\left(T^{*}T\right)^{\frac{1}{2}}\chi_{B}\right\rangle _{L^{2}\left(\mu\right)}.
\end{eqnarray*}
Now setting, 
\begin{equation}
l_{A}:=\left(T^{*}T\right)^{\frac{1}{2}}\chi_{A},\quad A\in\mathscr{B}_{fin},\label{eq:GI29}
\end{equation}
the desired conclusion (\ref{eq:GI28}) follows.
\end{proof}
\begin{proof}[Proof of \thmref{G7}]
 Let $\left(\beta,\mu\right)$ be as in the statement of \thmref{G7},
and let $\left\{ l_{A}\right\} _{A\in\mathscr{B}_{fin}}$ be the $L^{2}\left(\mu\right)$-function
in (\ref{eq:GI29}). We see that the factorization (\ref{eq:GI28})
is valid. 

Hence, by \corref{G2}, the corresponding It\^{o}-integral (\ref{eq:GI27})
is well defined; and the resulting Gaussian process $X_{A}:=\int_{M}l_{A}\left(x\right)dX_{x}^{\left(\mu\right)}$
is a Gaussian field with $\mathbb{E}\left(X_{A}\right)=0$. Hence
we only need to verify the convariance condition in (\ref{eq:GI25})
above: 

Let $A,B\in\mathscr{B}_{fin}$, and compute: 
\begin{eqnarray*}
\mathbb{E}\left(X_{A}X_{B}\right) & = & \mathbb{E}\left[\left(\int_{M}l_{A}\left(x\right)dX_{x}^{\left(\mu\right)}\right)\left(\int_{M}l_{B}\left(x\right)dX_{x}^{\left(\mu\right)}\right)\right]\\
 & \underset{{\scriptscriptstyle \text{by Cor. \ref{cor:G3}}}}{=} & \int_{M}l_{A}\left(x\right)l_{B}\left(x\right)d\mu\left(x\right)\quad\left(\mu=QV\left(X^{\left(\mu\right)}\right)\right)\\
 & \underset{{\scriptscriptstyle {\scriptscriptstyle \stackrel{\text{by Lem. \ref{lem:G8},}}{\text{see \ensuremath{\left(\ref{eq:GI28}\right)}}}}}}{=} & \beta\left(A,B\right);
\end{eqnarray*}
and the proof is completed. 
\end{proof}
\begin{rem}[fractional Brownian motion]
\label{rem:GI9}As an application of \thmref{G7}, consider the case
of $\left(\mathbb{R},\mathscr{B},\lambda_{1}\right)$ (so $\mu=\lambda_{1}$),
i.e., standard Lebesgue measure on $\mathbb{R}$, with $\mathscr{B}$
denoting the standard Borel-sigma-algebra. We shall discuss fractional
Brownian motion with Hurst parameter $H$ (see \cite{MR0242239,MR672910,MR2123205,MR2178502,MR2793121,MR2966130}). 

Recall, on $[0,\infty)$, fractional Brownian motion $\{X_{t}^{\left(H\right)}\}_{t\in[0,\infty)}$,
$0<H<1$, fixed, may be normalized as follows: $X_{0}^{\left(H\right)}=0$,
\begin{align}
\mathbb{E}\left(X_{t}^{\left(H\right)}\right) & =0,\quad\text{and}\\
\mathbb{E}\left(X_{s}^{\left(H\right)}X_{t}^{\left(H\right)}\right) & =\frac{1}{2}\left(s^{2H}+t^{2H}-\left|s-t\right|^{2H}\right),\quad\forall s,t\in[0,\infty).\label{eq:GI31}
\end{align}
The corresponding process induced by $\mathscr{B}_{fin}$ is 
\begin{equation}
X_{\left[0,t\right]}^{\left(H\right)}:=X_{t}^{\left(H\right)};\label{eq:GI32}
\end{equation}
and we shall adapt (\ref{eq:GI32}) as an identification. The following
spectral representation is known: Set, for $\lambda\in\mathbb{R}$,
\begin{equation}
d\mu^{\left(H\right)}\left(\lambda\right)=\frac{\sin\left(\pi H\right)\Gamma\left(1+2H\right)}{2\pi}\left|\lambda\right|^{1-2H}d\lambda;\label{eq:G33}
\end{equation}
then 
\begin{equation}
\mathbb{E}\left(X_{s}^{\left(H\right)}X_{t}^{\left(H\right)}\right)=\int_{\mathbb{R}}\frac{\left(e^{i\lambda s}-1\right)\left(e^{-i\lambda t}-1\right)}{\lambda^{2}}d\mu^{\left(H\right)}\left(\lambda\right).\label{eq:G34}
\end{equation}

A choice of factorization for the kernel $K^{\left(H\right)}\left(s,t\right)$
in (\ref{eq:GI31}) is then as follows: 
\begin{align*}
K^{\left(H\right)}\left(s,t\right) & =\frac{1}{2}\left(s^{2H}+t^{2H}-\left|s-t\right|^{2H}\right)\\
 & =\int_{\mathbb{R}}l_{s}\left(x\right)l_{t}\left(x\right)dx\quad\left(s,t\in[0,\infty)\right)
\end{align*}
with 
\begin{align}
l_{t}\left(x\right) & =\frac{1}{\Gamma\left(H+\frac{1}{2}\right)}\Big(\chi_{(-\infty,0]}\left(x\right)\left(\left(t-x\right)^{H-\frac{1}{2}}-\left(-x\right)^{H-\frac{1}{2}}\right)\nonumber \\
 & \qquad+\chi_{\left[0,t\right]}\left(x\right)\left(t-x\right)^{H-\frac{1}{2}}\Big),\quad x\in\mathbb{R},\:t\in[0,\infty).\label{eq:G35}
\end{align}
\end{rem}

\subsection{Application to fractional Brownian motion}

\hspace*{\fill}\\Fix $H$, $0<H<1$, the Hurst parameter, and let
$\{X_{t}^{\left(H\right)}\}_{t\in[0,\infty)}$ be fractional Brownian
motion (fBM), see \remref{GI9}. Then the special case $H=\frac{1}{2}$
corresponds to standard Brownian motion (BM). we shall write $X_{t}^{(\nicefrac{1}{2})}=W_{t}$;
where ``$W$'' is for Wiener. Now $\left\{ W_{t}\right\} _{t\in[0,\infty)}$
is a martingale; and standard Brownian motion has \emph{independent
increments}, by contrast to the case when $H\neq\frac{1}{2}$, i.e.,
fBM. 
\begin{flushleft}
\textbf{(i) It\^{o}-integral representation for $X_{t}^{\left(H\right)}$
when $H\neq\frac{1}{2}$.}
\par\end{flushleft}

We now combine \thmref{G7}, (\ref{eq:G35}) and (\ref{eq:GI27})
to conclude that $X_{t}^{\left(H\right)}$ has the following It\^{o}-integral
representation: 

Let $\{l_{t}^{\left(H\right)}\}_{t\in[0,\infty)}$ be the integral
kernel from (\ref{eq:G35}). Note, it depends on the value of $H$,
but we shall fix $H$, $H\neq\frac{1}{2}$. Then 
\begin{equation}
X_{t}^{\left(H\right)}=\int_{\mathbb{R}}l_{t}^{\left(H\right)}\left(x\right)dW_{x};\label{eq:GI36}
\end{equation}
where $\text{RHS}_{\left(\ref{eq:GI36}\right)}$ is the It\^{o}-integral
introduced in \corref{G2} in the more general setting of $X^{\left(\mu\right)}$.
Here, $\mu=\lambda_{1}=dx$ is standard Lebesgue measure; and $QV\left(W_{x}\right)=dx$;
see \corref{G3}.
\begin{flushleft}
\textbf{(ii) Filtrations.}
\par\end{flushleft}

Returning to the probability space $\left(\Omega,\mathscr{C}\right)$
for $\left\{ W_{t}\right\} _{t\in[0,\infty)}$; see \propref{G1},
and let $\mathscr{B}$ be the standard Borel $\sigma$-algebra of
subsets of $\mathbb{R}$. For $A\in\mathscr{B}$, we denote by $\mathscr{F}\left(A\right):=$
the sub $\sigma$-algebra of the cylinder $\sigma$-algebra in $\Omega$
(see (\ref{eq:G4})) generated by the random variables $W_{B}$, as
$B$ in $\mathscr{B}$ varies over subsets $B\subseteq A$. Let $l_{t}^{\left(\pm\right)}\left(x\right)$
denote the two separate terms on $\text{RHS}_{\left(\ref{eq:G35}\right)}$,
i.e., 
\[
l_{t}^{\left(-\right)}\left(x\right)=\chi_{(-\infty,0]}\left(x\right)\left(\left(t-x\right)^{H-\frac{1}{2}}-\left(-x\right)^{H-\frac{1}{2}}\right)\Big/\Gamma\left(H+\tfrac{1}{2}\right)
\]
and 
\[
l_{t}^{\left(+\right)}\left(x\right)=\chi_{\left[0,t\right]}\left(x\right)\left(t-x\right)^{H-\frac{1}{2}}\Big/\Gamma\left(H+\tfrac{1}{2}\right).
\]
Then there are two components (of fractional Brownian motion):
\[
X_{t}^{\left(-\right)}=\int_{-\infty}^{0}l_{t}^{\left(-\right)}\left(x\right)dW_{x},
\]
and 
\[
X_{t}^{\left(+\right)}=\int_{0}^{t}l_{t}^{\left(+\right)}\left(x\right)dW_{x};
\]
where $H\neq\frac{1}{2}$ is fixed; (supposed in the notation.)

The two processes $(X_{t}^{\left(-\right)})$ and $(X_{t}^{\left(+\right)})$
are independent, and 
\begin{equation}
X_{t}=X_{t}^{\left(H\right)}=X_{t}^{\left(-\right)}\oplus X_{t}^{\left(+\right)}
\end{equation}
with 
\begin{equation}
\mathbb{E}\left(X_{s}X_{t}\right)=\mathbb{E}\left(X_{s}^{\left(-\right)}X_{t}^{\left(-\right)}\right)+\mathbb{E}\left(X_{s}^{\left(+\right)}X_{t}^{\left(+\right)}\right),\quad\forall s,t\in[0,\infty).
\end{equation}

These processes $(X_{t}^{\left(\pm\right)})$ result from the initial
fBM $X_{t}$ (\ref{eq:GI36}) itself, as conditional Gaussian processes
as follows: 
\begin{equation}
\mathbb{E}\left(X_{t}^{\left(+\right)}\mid\mathscr{F}\left((-\infty,0]\right)\right)=0;\label{eq:G39}
\end{equation}
and 
\begin{equation}
\mathbb{E}\left(X_{t}\mid\mathscr{F}\left((-\infty,0]\right)\right)=X_{t}^{\left(-\right)}\label{eq:G38}
\end{equation}
and 
\begin{equation}
\mathbb{E}\left(X_{t}\mid\mathscr{F}\left(\left[0,t\right]\right)\right)=X_{t}^{\left(+\right)}.\label{eq:G41}
\end{equation}
So $X_{t}^{\left(-\right)}$ in (\ref{eq:G38}) is the backward process,
while $X_{t}^{\left(+\right)}$ is the corresponding forward process. 
\begin{cor}
Fix $H$ (Hurst parameter) as above, and consider the fractional Brownian
motion $X_{t}^{\left(H\right)}$, and its forward part $X_{t}^{\left(+\right)}:=(X_{t}^{\left(H\right)})^{+}$
given in (\ref{eq:G41}). Then $(X_{t}^{\left(H\right)})^{+}$ is
a \uline{semimartingale}, i.e., if $0<s<t$, then 
\begin{equation}
\mathbb{E}\left(X_{t}^{\left(+\right)}\mid\mathscr{F}\left(\left[0,s\right]\right)\right)=X_{s}^{\left(+\right)}.\label{eq:GS42}
\end{equation}
\end{cor}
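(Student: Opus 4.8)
The plan is to reduce the martingale identity (\ref{eq:GS42}) to a conditional-expectation statement about the increments of the underlying fractional Brownian motion, and then to read that statement off from the It\^o-integral representation. The two facts I would lean on are: the representation $X_{t}^{\left(+\right)}=\frac{1}{\Gamma\left(H+\frac12\right)}\int_{0}^{t}\left(t-x\right)^{H-\frac12}dW_{x}$ coming from (\ref{eq:G35}) and (\ref{eq:G41}); and the Gaussian-field structure of \corref{G2}, by which, for the standard Brownian field $\{W_{x}\}$, the filtration $\{\mathscr{F}\left(\left[0,s\right]\right)\}_{s\ge0}$ has independent increments and each conditional expectation onto $\mathscr{F}\left(\left[0,s\right]\right)$ is the $L^{2}\left(\Omega,\mathbb{P}\right)$-orthogonal projection onto the closed span of $\{W_{u}:u\in\left[0,s\right]\}$.

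The quickest route is the tower property. Since $\left[0,s\right]\subseteq\left[0,t\right]$ we have $\mathscr{F}\left(\left[0,s\right]\right)\subseteq\mathscr{F}\left(\left[0,t\right]\right)$, so by (\ref{eq:G41}) and iterated conditioning,
\[
\mathbb{E}\left(X_{t}^{\left(+\right)}\mid\mathscr{F}\left(\left[0,s\right]\right)\right)=\mathbb{E}\left(\mathbb{E}\left(X_{t}\mid\mathscr{F}\left(\left[0,t\right]\right)\right)\mid\mathscr{F}\left(\left[0,s\right]\right)\right)=\mathbb{E}\left(X_{t}\mid\mathscr{F}\left(\left[0,s\right]\right)\right).
\]
Because $X_{s}^{\left(+\right)}=\mathbb{E}\left(X_{s}\mid\mathscr{F}\left(\left[0,s\right]\right)\right)$, again by (\ref{eq:G41}), the claimed identity (\ref{eq:GS42}) is equivalent to
\[
\mathbb{E}\left(X_{t}-X_{s}\mid\mathscr{F}\left(\left[0,s\right]\right)\right)=0,
\]
i.e.\ to the fBM increment $X_{t}-X_{s}$ being conditionally centred given the past. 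Equivalently, via the splitting $\int_{0}^{t}=\int_{0}^{s}+\int_{s}^{t}$ of the It\^o integral: the $\left[s,t\right]$-piece is a deterministic-kernel integral over an increment interval, hence independent of $\mathscr{F}\left(\left[0,s\right]\right)$ and centred, so it conditions to $0$ by \corref{G2}; while the $\left[0,s\right]$-piece is $\mathscr{F}\left(\left[0,s\right]\right)$-measurable and is returned unchanged. This gives
\[
\mathbb{E}\left(X_{t}^{\left(+\right)}\mid\mathscr{F}\left(\left[0,s\right]\right)\right)=\frac{1}{\Gamma\left(H+\frac12\right)}\int_{0}^{s}\left(t-x\right)^{H-\frac12}dW_{x}.
\]

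The main obstacle — indeed the entire content of (\ref{eq:GS42}) — is then to identify this restricted integral with $X_{s}^{\left(+\right)}=\frac{1}{\Gamma\left(H+\frac12\right)}\int_{0}^{s}\left(s-x\right)^{H-\frac12}dW_{x}$. By the It\^o isometry of \corref{G2} this is an $L^{2}\left(\mu\right)$-identity of deterministic kernels on $\left[0,s\right]$, namely $\left(t-x\right)^{H-\frac12}=\left(s-x\right)^{H-\frac12}$ for a.e.\ $x$; that is, the forward Riemann--Liouville kernel restricted to $\left[0,s\right]$ must be independent of the upper endpoint $t$. This endpoint-independence of the kernel is the crux that must be extracted from the explicit form (\ref{eq:G35}) of $l^{\left(+\right)}$, and it is where the hypothesis on $H$ does all the work; once it is in hand, (\ref{eq:GS42}), and with it the semimartingale assertion for $\left(X_{t}^{\left(H\right)}\right)^{+}$, follows at once.
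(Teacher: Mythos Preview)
Your tower-property reduction
\[
\mathbb{E}\bigl(X_{t}^{(+)}\mid\mathscr{F}([0,s])\bigr)=\mathbb{E}\bigl(\mathbb{E}(X_{t}\mid\mathscr{F}([0,t]))\mid\mathscr{F}([0,s])\bigr)=\mathbb{E}\bigl(X_{t}\mid\mathscr{F}([0,s])\bigr),
\]
followed by the splitting $X_{t}=(X_{t}-X_{s})+X_{s}$, is exactly the paper's argument. The divergence is in how the increment term is handled.

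The paper disposes of $\mathbb{E}(X_{t}-X_{s}\mid\mathscr{F}([0,s]))$ in one stroke by citing (\ref{eq:G39}), treating the vanishing of this conditional expectation as the time-shifted analogue of $\mathbb{E}(X_{t}^{(+)}\mid\mathscr{F}((-\infty,0]))=0$; it then applies (\ref{eq:G41}) to $X_{s}$ and is finished. No kernel computation is carried out.

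You instead try to verify the vanishing (equivalently, to compute $\mathbb{E}(X_{t}^{(+)}\mid\mathscr{F}([0,s]))$ directly) from the It\^o representation, splitting $\int_{0}^{t}=\int_{0}^{s}+\int_{s}^{t}$. This is where your proposal breaks down: the resulting requirement
\[
(t-x)^{H-\frac12}=(s-x)^{H-\frac12}\quad\text{for a.e.\ }x\in[0,s]
\]
is \emph{false} for every $H\neq\tfrac12$ (just set $x=0$). There is no ``hypothesis on $H$'' in the corollary that makes the Riemann--Liouville kernel $l_{t}^{(+)}$ endpoint-independent on $[0,s]$; the kernel in (\ref{eq:G35}) manifestly depends on $t$ there. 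So the explicit $L^{2}(\mu)$-route you describe cannot be completed as written.

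In summary: the first half of your proposal coincides with the paper's proof; the second half replaces the paper's appeal to (\ref{eq:G39}) by a deterministic kernel identity that does not hold. If you want to align with the paper, drop the kernel computation and invoke (\ref{eq:G39}) for the conditional increment, then (\ref{eq:G41}) for $X_{s}$.
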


\begin{proof}
\begin{eqnarray*}
\text{LHS}_{\left(\ref{eq:GS42}\right)} & \underset{{\scriptscriptstyle \text{by \ensuremath{\left(\ref{eq:G41}\right)}}}}{=} & \mathbb{E}\left(\mathbb{E}\left(X_{t}^{\left(H\right)}\mid\mathscr{F}\left(\left[0,t\right]\right)\right)\mid\mathscr{F}\left(\left[0,s\right]\right)\right)\\
 & = & \mathbb{E}\left(X_{t}^{\left(H\right)}\mid\mathscr{F}\left(\left[0,s\right]\right)\right)\\
 & = & \mathbb{E}\left(\left(X_{t}^{\left(H\right)}-X_{s}^{\left(H\right)}\right)+X_{s}^{\left(H\right)}\mid\mathscr{F}\left(\left[0,s\right]\right)\right)\\
 & \underset{{\scriptscriptstyle \text{by \ensuremath{\left(\ref{eq:G39}\right)}}}}{=} & \mathbb{E}\left(X_{s}^{\left(H\right)}\mid\mathscr{F}\left(\left[0,s\right]\right)\right)\underset{{\scriptscriptstyle \text{by \ensuremath{\left(\ref{eq:G41}\right)}}}}{=}X_{s}^{\left(+\right)}.
\end{eqnarray*}
\end{proof}
We stress that the proofs of these properties of fBM, (with $H\neq\frac{1}{2}$)
follow essentially from our conclusions in \remref{GI9}, as well
as Corollaries \ref{cor:G2} and \ref{cor:G3}.
\begin{flushleft}
\textbf{The spectral representation.}
\par\end{flushleft}

The formula (\ref{eq:G34}) is a \emph{spectral representation} in
following sense: The choice of $d\mu^{\left(H\right)}$ in (\ref{eq:G33})
yields the following generalized Paley-Wiener space (compare (\ref{eq:I7})--(\ref{eq:I8})
above):

Let $\mathscr{H}(\mu^{\left(H\right)})$ denote the Hilbert space
of functions $f$ on $\mathbb{R}$ such that the Fourier transform
$\widehat{f}$ is well defined and is in $L^{2}(\mu^{\left(H\right)})$.
Then set
\begin{equation}
\left\Vert f\right\Vert _{\mathscr{H}(\mu^{\left(H\right)})}^{2}=\Vert\widehat{f}\Vert_{L^{2}(\mu^{\left(H\right)})}^{2}=\int_{\mathbb{R}}|\widehat{f}\left(\lambda\right)|^{2}d\mu^{\left(H\right)}\left(\lambda\right).\label{eq:G40}
\end{equation}
For $f\in\mathscr{H}(\mu^{\left(H\right)})$, consider the It\^{o}-integral,
\begin{equation}
X^{\left(H\right)}\left(f\right):=\int f\left(t\right)dX_{t}^{\left(H\right)}.
\end{equation}
Then it follows from (\ref{eq:G34}), and Theorems \ref{thm:T1} and
\ref{thm:G7} that 
\begin{equation}
\mathbb{E}\left(\left|X^{\left(H\right)}\left(f\right)\right|^{2}\right)=\left\Vert f\right\Vert _{\mathscr{H}(\mu^{\left(H\right)})}^{2}.
\end{equation}
In particular, 
\begin{equation}
\mathbb{E}\left(\left|X^{\left(H\right)}\left(f\left(\cdot+t\right)\right)\right|^{2}\right)=\mathbb{E}\left(\left|X^{\left(H\right)}\left(f\right)\right|^{2}\right).
\end{equation}
This follows since the RHS in (\ref{eq:G40}) is translation invariant,
i.e., we have:
\begin{equation}
\widehat{f\left(\cdot+t\right)}\left(\lambda\right)=e^{i\lambda t}\widehat{f}\left(\lambda\right).
\end{equation}

\subsection{A Karhunen-Lo\`{e}ve representation}

\hspace*{\fill}\\The Karhunen-Lo\`{e}ve (KL) theorem is usually
stated for the special case of positive definite kernels $K$ which
are also continuous (typically on a bounded interval), so called Mercer-kernels.
The starting point is then an application of the spectral theorem
to the corresponding selfadjoint integral operators, $T_{K}$ in $L^{2}$
of the interval. Mercer\textquoteright s theorem states that if $K$
is Mercer, then the integral operator $T_{K}$ is trace-class. A Karhunen-Lo\`{e}ve
representation for a stochastic process (with specified covariance
kernel $K$) is a generalized infinite linear combination, or orthogonal
expansion, for the random process, analogous to a Fourier series representation
for (deterministic) functions on a bounded interval; see e.g., \cite{MR0008270,MR2268393}.
The KL representation we give below is much more general, and it applies
to the most general positive definite kernel, and makes essential
use of our RKHS theorem (\corref{G7} below). In our KL-theorem, we
also make precise the random i.i.d $N(0,1)$-terms inside the KL-expansion;
see (\ref{eq:G36}).
\begin{cor}
\label{cor:G7}Let $\left(M,\mathscr{B},\mu\right)$ be a $\sigma$-finite
measure space, and let $\{X_{A}^{\left(\mu\right)}\}_{A\in\mathscr{B}_{fin}}$
be the associated Gaussian field (see \propref{G1} and \corref{G2}.)
Let $\left\{ \varphi_{k}\right\} _{k\in\mathbb{N}}$ be an orthonormal
basis (ONB) in $L^{2}\left(\mu\right)$, and set 
\begin{equation}
Z_{k}:=X_{\varphi_{k}}^{\left(\mu\right)}=\int_{M}\varphi_{k}\,dX^{\left(\mu\right)}.\label{eq:G36}
\end{equation}
\begin{enumerate}
\item Then $\left\{ Z_{k}\right\} _{k\in\mathbb{N}}$ is an i.i.d. $N\left(0,1\right)$-system
(i.e., a system of independent, identically distributed standard Gaussians.)
\item Moreover, $X^{\left(\mu\right)}$ admits the following Karhunen-Lo\`{e}ve
representation ($A\in\mathscr{B}_{fin}$): 
\begin{equation}
X_{A}^{\left(\mu\right)}\left(\cdot\right)=\sum_{k\in\mathbb{N}}\left(\int_{A}\varphi_{k}\,d\mu\right)Z_{k}\left(\cdot\right),\label{eq:G23}
\end{equation}
and, more generally, for $\psi\in L^{2}\left(\mu\right)$, 
\begin{equation}
X_{\psi}^{\left(\mu\right)}\left(\cdot\right)=\sum_{k\in\mathbb{N}}\left\langle \psi,\varphi_{k}\right\rangle _{L^{2}\left(\mu\right)}Z_{k}\left(\cdot\right).
\end{equation}
\item In particular, $X^{\left(\mu\right)}$ admits a realization on the
infinite product space $\Omega=\mathbb{R}^{\mathbb{N}}$, equipped
with the usual cylinder $\sigma$-algebra, and the infinite-product
measure
\begin{equation}
\mathbb{P}:=\vartimes_{\mathbb{N}}g_{1}=g_{1}\times g_{1}\times\cdots,\label{eq:G25}
\end{equation}
where $g_{1}\left(x\right)=\frac{1}{\sqrt{2\pi}}e^{-x^{2}/2}=$ the
$N\left(0,1\right)$-distribution. (We compute the expectation $\mathbb{E}$
with respect to $\mathbb{P}$, the infinite product measure $\mathbb{P}$
in (\ref{eq:G25}))
\end{enumerate}
\end{cor}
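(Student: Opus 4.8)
I would prove \corref{G7} in three stages, mirroring the three enumerated claims. The central fact to exploit is that, by \corref{G2}, the map $T_{\mu}:\varphi\longmapsto X_{\varphi}^{\left(\mu\right)}$ is an isometry from $L^{2}\left(\mu\right)$ into $L^{2}\left(\Omega,\mathbb{P}^{\left(\mu\right)}\right)$ preserving inner products, so that $\mathbb{E}_{\mu}\bigl(X_{\varphi}^{\left(\mu\right)}X_{\psi}^{\left(\mu\right)}\bigr)=\left\langle \varphi,\psi\right\rangle _{L^{2}\left(\mu\right)}$ for all $\varphi,\psi\in L^{2}\left(\mu\right)$, together with the fact (also from \corref{G2}) that $X_{\varphi}^{\left(\mu\right)}\sim N\bigl(0,\left\Vert \varphi\right\Vert _{L^{2}\left(\mu\right)}^{2}\bigr)$ and, more generally, that every finite linear combination $\sum_{i}\alpha_{i}X_{\varphi_{i}}^{\left(\mu\right)}$ is Gaussian (being an $L^{2}$-limit of linear combinations of the jointly Gaussian family $\{X_{A}^{\left(\mu\right)}\}$).

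\textbf{Step 1 (the i.i.d.\ claim).} With $Z_{k}:=X_{\varphi_{k}}^{\left(\mu\right)}$ where $\{\varphi_{k}\}$ is an ONB of $L^{2}\left(\mu\right)$, I would compute
\begin{equation}
\mathbb{E}\left(Z_{k}Z_{l}\right)=\left\langle \varphi_{k},\varphi_{l}\right\rangle _{L^{2}\left(\mu\right)}=\delta_{k,l},
\end{equation}
so each $Z_{k}\sim N\left(0,1\right)$ and the $Z_{k}$ are pairwise uncorrelated. Then I would observe that the family $\{Z_{k}\}_{k}$ is \emph{jointly} Gaussian --- any finite linear combination $\sum_{k}c_{k}Z_{k}=X_{\sum_{k}c_{k}\varphi_{k}}^{\left(\mu\right)}$ is a single Gaussian random variable by the remark above --- and for jointly Gaussian families, uncorrelated implies independent. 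This gives (i).

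\textbf{Step 2 (the KL expansion).} Fix $A\in\mathscr{B}_{fin}$, so $\chi_{A}\in L^{2}\left(\mu\right)$, and expand $\chi_{A}=\sum_{k}\left\langle \chi_{A},\varphi_{k}\right\rangle _{L^{2}\left(\mu\right)}\varphi_{k}=\sum_{k}\bigl(\int_{A}\varphi_{k}\,d\mu\bigr)\varphi_{k}$ in $L^{2}\left(\mu\right)$. Since $T_{\mu}$ is a bounded (isometric) operator, it is continuous, so applying it term by term yields
\begin{equation}
X_{A}^{\left(\mu\right)}=T_{\mu}\left(\chi_{A}\right)=\sum_{k\in\mathbb{N}}\left(\int_{A}\varphi_{k}\,d\mu\right)Z_{k}
\end{equation}
with convergence in $L^{2}\left(\Omega,\mathbb{P}^{\left(\mu\right)}\right)$; the same argument with $\chi_{A}$ replaced by an arbitrary $\psi\in L^{2}\left(\mu\right)$ gives the general formula in (ii). (If one wants almost-sure convergence of the series as well, this follows from the martingale convergence theorem applied to the partial sums, which form an $L^{2}$-bounded martingale with respect to the filtration generated by $Z_{1},\dots,Z_{n}$ --- this is precisely the classical i.i.d.\ series representation alluded to in the introduction.)

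\textbf{Step 3 (the product-space realization).} By Step 1 the sequence $\left(Z_{k}\right)_{k\in\mathbb{N}}$ is a sequence of independent $N\left(0,1\right)$ variables, so its joint law on $\mathbb{R}^{\mathbb{N}}$ is exactly the infinite product measure $\mathbb{P}=\vartimes_{\mathbb{N}}g_{1}$ in \eqref{eq:G25}. Transporting the process through the coordinate map $\omega\longmapsto\left(Z_{k}\left(\omega\right)\right)_{k}$ and using the series in (ii) to \emph{define} $X_{A}^{\left(\mu\right)}$ on $\bigl(\mathbb{R}^{\mathbb{N}},\mathbb{P}\bigr)$ gives a realization with the correct finite-dimensional distributions (the covariances match by Parseval: $\sum_{k}\bigl(\int_{A}\varphi_{k}\,d\mu\bigr)\bigl(\int_{B}\varphi_{k}\,d\mu\bigr)=\left\langle \chi_{A},\chi_{B}\right\rangle _{L^{2}\left(\mu\right)}=\mu\left(A\cap B\right)$), and uniqueness of the Gaussian field (Kolmogorov, as in \propref{G1}) identifies it with $X^{\left(\mu\right)}$. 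This proves (iii).

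\textbf{Main obstacle.} The only genuinely delicate point is the joint Gaussianity used in Step 1 to upgrade ``uncorrelated'' to ``independent'': one must make sure the $L^{2}$-limit defining $X_{\varphi}^{\left(\mu\right)}$ for general $\varphi\in L^{2}\left(\mu\right)$ stays inside the Gaussian regime, i.e.\ that an $L^{2}$-limit of mean-zero Gaussians is again Gaussian (true, since convergence in $L^{2}$ forces convergence of characteristic functions and variances). Everything else is a routine transcription of Parseval's identity through the isometry $T_{\mu}$; the almost-sure convergence refinement in Step 2, if desired, is the one place where a genuine probabilistic tool (martingale convergence) enters.
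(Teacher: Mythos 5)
Your proof is correct, and it takes a mildly but genuinely different route from the paper's in the key step. The paper's own argument treats the right-hand side of \eqref{eq:G23} as a candidate process: it computes its covariance, $\sum_{k}\int_{A}\varphi_{k}\,d\mu\int_{B}\varphi_{k}\,d\mu=\mu\left(A\cap B\right)$ by Parseval, and then identifies it with $X^{\left(\mu\right)}$ by the fact that a mean-zero Gaussian process is determined by its covariance kernel --- i.e.\ an identification in law; for part (i) it simply cites ``standard Gaussian theory.'' You instead apply the isometry $T_{\mu}$ of \corref{G2} term by term to the ONB expansion $\chi_{A}=\sum_{k}\bigl(\int_{A}\varphi_{k}\,d\mu\bigr)\varphi_{k}$, which yields \eqref{eq:G23} as an $L^{2}\left(\Omega,\mathbb{P}^{\left(\mu\right)}\right)$-identity of random variables on the original probability space, not merely an equality of distributions --- a slightly stronger and cleaner conclusion --- and you also supply the details behind (i) (joint Gaussianity of $\{Z_{k}\}$ via $\sum_{k}c_{k}Z_{k}=X_{\sum_{k}c_{k}\varphi_{k}}^{\left(\mu\right)}$, plus ``uncorrelated jointly Gaussian $\Rightarrow$ independent,'' and the observation that $L^{2}$-limits of centered Gaussians remain Gaussian) that the paper leaves to citations. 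The covariance-matching computation you relegate to Step 3 is exactly the paper's central calculation, so the two proofs share their computational core; what your version buys is the sharper mode of identification and the self-contained treatment of (i), at the cost of invoking the continuity of $T_{\mu}$ explicitly (and, optionally, martingale convergence for a.s.\ convergence, which neither the statement nor the paper requires).
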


\begin{proof}[Proof sketch]
 When the system $\left\{ Z_{k}\right\} _{k\in\mathbb{N}}$ is specified
as in (\ref{eq:G36}), it follows from standard Gaussian theory (see
e.g., \cite{MR2362796,MR2446590,MR2670567,MR2793121,MR2966130,MR3402823,MR3687240}
and the papers cited there) that it is an i.i.d. $N\left(0,1\right)$-system. 

For $A,B\in\mathscr{B}_{fin}$, 
\begin{eqnarray*}
\mathbb{E}\left(X_{A}^{\left(\mu\right)}X_{B}^{\left(\mu\right)}\right) & = & \sum_{k\in\mathbb{N}}\sum_{l\in\mathbb{N}}\int_{A}\varphi_{k}\,d\mu\int_{B}\varphi_{l}\,d\mu\,\underset{{\scriptscriptstyle \delta_{k,l}}}{\underbrace{\mathbb{E}\left(Z_{k}Z_{l}\right)}}\\
 & = & \sum_{k\in\mathbb{N}}\int_{A}\varphi_{k}\,d\mu\int_{B}\varphi_{k}\,d\mu\\
 & \underset{{\scriptscriptstyle \text{by Parseval}}}{=} & \left\langle \chi_{A},\chi_{B}\right\rangle _{L^{2}\left(\mu\right)}=\mu\left(A\cap B\right).
\end{eqnarray*}
Since the representation in (\ref{eq:G23}) yields a Gaussian process
with mean zero, it is determined by its covariance kernel, and the
result follows.
\end{proof}
\begin{rem}
In this section, we have addressed some questions that are naturally
implied by our present setting, but we wish to stress that there is
a vast literature in the general area of the subject, and dealing
with a variety of different important issues for Gaussian fields.
Below we cite a few papers, and readers may also want to consult papers
cited there: \cite{MR2677883,MR2805533,MR3266271,MR3803148,MR1952822,MR2382071,MR3501849,MR2322706}.
\end{rem}

\section{Measures on $\left(I,\mathscr{B}\right)$ when $I$ is an interval}

We consider the spaces consisting of the measure spaces when $I$
is an interval (taking $I=\left[0,1\right]$ for specificity); and
where $\mathscr{B}$ is the standard Borel $\sigma$-algebra of subsets
in $I$. 

In this case, our results above, especially \corref{T2}, take the
following form: 
\begin{thm}
Let $\mu$ be a $\sigma$-finite measure on $\left(M,\mathscr{B}\right)$,
and $\beta_{\mu}\left(A,B\right)=\mu\left(A\cap B\right)$ the p.d.
function from (\ref{eq:T8}). Let $\mathscr{H}\left(\beta_{\mu}\right)$
be the corresponding RKHS. 
\begin{enumerate}
\item Then $\mathscr{H}\left(\beta_{\mu}\right)$ consists of all functions
$F$ on $\left[0,1\right]$, such that $F\left(0\right)=0$, and 
\begin{equation}
\sup_{0\leq a<b\leq1}\frac{\left|F\left(b\right)-F\left(a\right)\right|}{\mu\left(\left[a,b\right]\right)}<\infty,\label{eq:M1}
\end{equation}
supremum over all intervals contained in $\left[0,1\right]$. 
\item If $dF/d\mu$ denotes the Radon-Nikodym derivative corresponding to
(\ref{eq:M1}), then the $\mathscr{H}\left(\beta_{\mu}\right)$-norm
is as follows:

First, $dF/d\mu\in L^{2}\left(\mu\right)$, and 
\begin{equation}
\left\Vert F\right\Vert _{\mathscr{H}\left(\beta_{\mu}\right)}^{2}=\int_{0}^{1}\left|\frac{dF}{d\mu}\right|^{2}d\mu.\label{eq:M2}
\end{equation}

\end{enumerate}
\end{thm}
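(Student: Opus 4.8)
The plan is to identify $\mathscr{H}(\beta_\mu)$ with $T(L^2(\mu))$ where $T:L^2(\mu)\to\mathscr{H}(\beta_\mu)$ is the inclusion operator, and then translate the $L^2$-data into pointwise conditions on the associated functions on $[0,1]$. The starting point is the Example following \corref{T2}: for $\beta_\mu(A,B)=\mu(A\cap B)$ the operator $T$ is \emph{isometric onto}, so every element $m\in\mathscr{H}(\beta_\mu)$ is of the form $m(A)=\int_A\varphi\,d\mu$ for a unique $\varphi\in L^2(\mu)$, with $\|m\|_{\mathscr{H}(\beta_\mu)}^2=\int_M|\varphi|^2\,d\mu$. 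Since we are on an interval, I would pass from the finitely additive signed measure $m$ (which here is automatically $\sigma$-additive and absolutely continuous w.r.t. $\mu$, by \corref{T2}) to its ``cumulative distribution'' $F(t):=m([0,t])=\int_{[0,t]}\varphi\,d\mu$, with the normalization $F(0)=0$ built in. This is the bijection $m\leftrightarrow F$ that converts (ii) of the Theorem into formula (\ref{eq:M2}) directly, because $dF/d\mu=\varphi=d m/d\mu$.

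The two implications for part (i) are then handled separately. For the forward direction, if $F$ arises from $\varphi=dF/d\mu\in L^2(\mu)$, then for any subinterval $[a,b]\subseteq[0,1]$ we have $F(b)-F(a)=\int_{[a,b]}\varphi\,d\mu$, and Cauchy--Schwarz in $L^2(\mu)$ gives $|F(b)-F(a)|\le\|\varphi\|_{L^2(\mu)}\,\mu([a,b])^{1/2}$ — but that only yields a $\tfrac12$-power bound, not (\ref{eq:M1}). So instead I would invoke \lemref{I1} applied to $X=\mathscr{B}_{fin}$: the condition for $F$ (viewed as a function on $\mathscr{B}_{fin}$ via $A\mapsto m(A)$) to lie in $\mathscr{H}(\beta_\mu)$ is precisely the a priori estimate (\ref{eq:I6}) with kernel $\beta_\mu$, i.e. $|\sum_i\alpha_i m(A_i)|^2\le C\sum_{i,j}\alpha_i\alpha_j\mu(A_i\cap A_j)$. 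Specializing to two disjoint test sets, or rather to $A_1=[0,b]$, $A_2=[0,a]$ with $\alpha_1=1,\alpha_2=-1$, gives $|F(b)-F(a)|^2=|m([a,b])|^2\le C\,\mu([a,b])$, which is exactly (\ref{eq:M1}) with $C$ the square of the constant. Conversely, if (\ref{eq:M1}) holds with finite supremum $C^{1/2}$, the set function $[a,b]\mapsto F(b)-F(a)$ extends (by the usual Carathéodory/Hahn argument on the interval, using that intervals generate $\mathscr{B}$) to a signed measure $m$ dominated in total variation by $C^{1/2}\mu$ on every interval, hence $m\ll\mu$; its Radon--Nikodym derivative $\varphi=dF/d\mu$ satisfies $|\varphi|\le C^{1/2}$ $\mu$-a.e. and so lies in $L^2_{\mathrm{loc}}$, and on $[0,1]$ with $\mu([0,1])<\infty$ it lies in $L^2(\mu)$; therefore $m=T\varphi\in\mathscr{H}(\beta_\mu)$ by the Example. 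This also gives $dF/d\mu\in L^2(\mu)$, the first assertion of (ii), and then (\ref{eq:M2}) is just the isometry $\|m\|^2_{\mathscr{H}(\beta_\mu)}=\int|\varphi|^2\,d\mu$ rewritten.

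I expect the main obstacle to be the reconstruction direction: going from the pointwise difference-quotient bound (\ref{eq:M1}) back to an honest $\sigma$-additive measure $m$ with density in $L^2(\mu)$, and verifying that this $m$ is the element of $\mathscr{H}(\beta_\mu)$ whose ``evaluation functionals'' match $F$. The delicate point is that (\ref{eq:M1}) a priori controls $F$ only on intervals, and one must show the resulting interval premeasure is countably additive before extending — here the key input is that $\mu$ itself is a ($\sigma$-finite, hence on $[0,1]$ finite) measure, so $\mu([a,b])\to0$ as $b\downarrow a$, giving continuity of $F$ and countable additivity of the interval premeasure $F(b)-F(a)$; then $m\ll\mu$ with bounded density follows, and square-integrability is automatic from finiteness of $\mu$ on $[0,1]$. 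One should also note that the supremum in (\ref{eq:M1}) equals $\|dF/d\mu\|_{L^\infty(\mu)}$, which is finite whenever $dF/d\mu\in L^2(\mu)\cap L^\infty(\mu)$; strictly speaking (\ref{eq:M1}) forces the bounded representative, and one checks this is consistent with membership in the RKHS via \lemref{I1} as above. A secondary minor point is handling degenerate intervals (where $\mu([a,b])=0$): there one interprets (\ref{eq:M1}) as requiring $F(a)=F(b)$, equivalently $m$ vanishes on $\mu$-null intervals, which is again automatic from $m\ll\mu$.
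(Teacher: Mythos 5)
Your overall route is the same as the paper's: you identify $\mathscr{H}(\beta_{\mu})$ with the range of the isometric-onto inclusion $T\colon L^{2}(\mu)\to\mathscr{H}(\beta_{\mu})$ from the Example following Corollary \ref{cor:T2}, so that every element is $m(A)=\int_{A}\varphi\,d\mu$ with $\Vert m\Vert_{\mathscr{H}(\beta_{\mu})}^{2}=\int|\varphi|^{2}d\mu$, and you pass to the cumulative function $F(t)=m([0,t])$. This disposes of part (ii) and of the direction ``(\ref{eq:M1}) $\Rightarrow$ membership'' essentially as the paper's proof sketch does, with two small caveats: you tacitly assume $\mu([0,1])<\infty$ (only $\sigma$-finiteness is stated), and $F$ need not be continuous when $\mu$ has atoms ($\mu([a,b])\to\mu(\{a\})$ as $b\downarrow a$), so countable additivity of the interval premeasure should be deduced from the domination $|F(b)-F(a)|\le S\,\mu([a,b])$ rather than from continuity of $F$.

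The genuine gap is in your forward direction of (i). You rightly observe that Cauchy--Schwarz only yields $|F(b)-F(a)|\le\Vert\varphi\Vert_{L^{2}(\mu)}\,\mu([a,b])^{1/2}$, but your proposed repair produces the same bound: specializing Lemma \ref{lem:I1} to $A_{1}=[0,b]$, $A_{2}=[0,a]$, $\alpha_{1}=1$, $\alpha_{2}=-1$ gives $|F(b)-F(a)|^{2}\le C\,\mu((a,b])$, i.e. $|F(b)-F(a)|\le C^{1/2}\mu([a,b])^{1/2}$, which is \emph{not} (\ref{eq:M1}), whose denominator is $\mu([a,b])$ to the first power; so the sentence ``which is exactly (\ref{eq:M1})'' is false. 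Nor can this be patched from membership alone: by the very isometry you invoke, $F\in\mathscr{H}(\beta_{\mu})$ is equivalent to $dF/d\mu\in L^{2}(\mu)$, whereas (\ref{eq:M1}) amounts to $dF/d\mu\in L^{\infty}(\mu)$ (as you note yourself); an unbounded square-integrable density, e.g. $\varphi(x)=x^{-1/3}$ against Lebesgue measure with $F(t)=\tfrac{3}{2}t^{2/3}$, yields an element of the RKHS for which the ratio in (\ref{eq:M1}) blows up as $a\downarrow0$. What Lemma \ref{lem:I1} actually delivers, when you test against finitely many disjoint intervals $[a_{i},b_{i}]$ with coefficients $\alpha_{i}=(F(b_{i})-F(a_{i}))/\mu([a_{i},b_{i}])$, is the quadratic, partition-type bound
\[
\sum_{i}\frac{\left|F(b_{i})-F(a_{i})\right|^{2}}{\mu\left(\left[a_{i},b_{i}\right]\right)}\le C,
\]
which is the condition equivalent to $dF/d\mu\in L^{2}(\mu)$ and is what the $L^{2}$-identification underlying the paper's argument supports; in the literal single-interval form (\ref{eq:M1}), your derivation of the forward implication does not go through.
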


\begin{proof}[Proof sketch]
 The idea is essentially contained in the considerations above from
\secref{SA}. Indeed, if $F$ is as specified in (\ref{eq:M1}) \&
(\ref{eq:M2}), set for all $A\in\mathscr{B}$, 
\begin{equation}
\mu_{F}\left(A\right)=\int_{A}\frac{dF}{d\mu}d\mu.\label{eq:M3}
\end{equation}
Then the Radon-Nikodym derivative $d\mu_{F}/d\mu$ in (\ref{eq:T7})
satisfies $d\mu_{F}/d\mu=dF/d\mu$ (see (\ref{eq:M2})--(\ref{eq:M3})). 

Moreover, 
\begin{equation}
L^{2}\left(\mu\right)\ni\varphi\longmapsto\underset{F_{\varphi}\left(\cdot\right)\in\mathscr{H}\left(\beta_{\mu}\right)}{\underbrace{\int_{A}\varphi\,d\mu=F_{\varphi}\left(A\right)}}
\end{equation}
defines an isometry, mapping onto $\mathscr{H}\left(\beta_{\mu}\right)$.
\end{proof}
\begin{example}[Cantor measures]
 If, for example, $\mu=\mu_{3}$ is the middle-third Cantor measure,
then the Devil's Staircase function (see \figref{cum}) is 
\begin{equation}
F\left(x\right)=\mu_{3}\left(\left[0,x\right]\right).
\end{equation}
It is in $\mathscr{H}\left(\beta_{\mu}\right)$, and 
\begin{equation}
\frac{dF}{d\mu_{3}}=\chi_{\left[0,1\right]}.\label{eq:M6}
\end{equation}
Note that it is important that the Radon-Nikodym derivative in (\ref{eq:M6})
is with respect to the Cantor measure $\mu_{3}$. If, for example,
$\lambda$ denotes the Lebesgue measure on $\left[0,1\right]$, then
$dF/d\lambda=0$. 

For graphical illustration of these functions, see Figures \ref{fig:DS}--\ref{fig:cum}
below. 
\end{example}

\begin{figure}[H]
\includegraphics[width=0.4\textwidth]{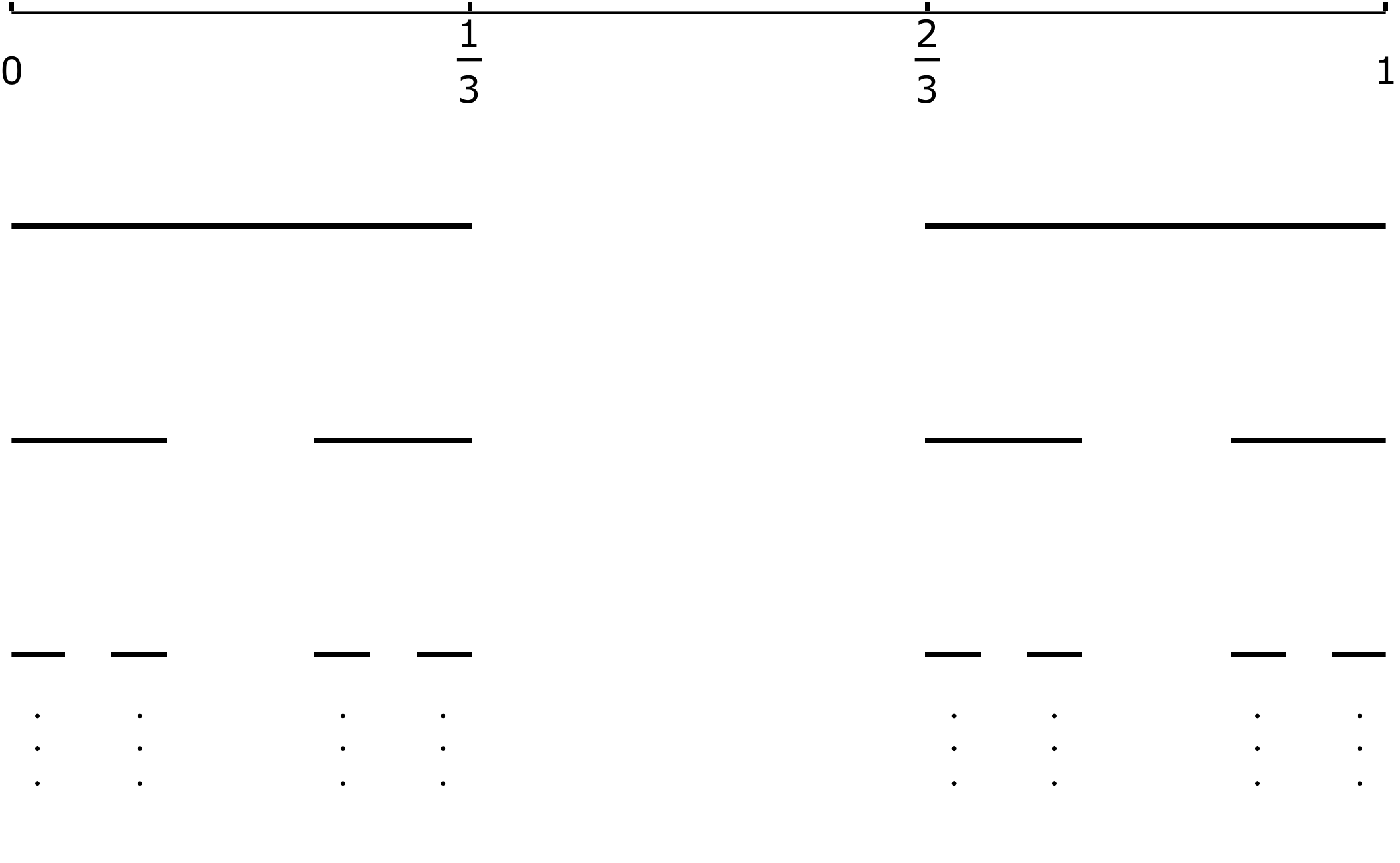}

\caption{\label{fig:DS}The middle-third Cantor set.}
\end{figure}

\begin{flushleft}
\textbf{Time-change}
\par\end{flushleft}

While there is earlier work in the literature, dealing with time-change
in Gaussian processes, see e.g., \cite{MR2484103,MR3363697}; our
aim here is to illustrate the use of our results in Sections \ref{sec:SA}
and \ref{sec:GF} as they apply to the change of the time-variable
in a Gaussian process. To make our point, we have found it sufficient
to derive the relevant properties for time-change for time in a half-line.

\begin{figure}[H]
\begin{tabular}{>{\raggedright}p{0.45\textwidth}>{\raggedright}p{0.45\textwidth}}
\includegraphics[width=0.3\textwidth]{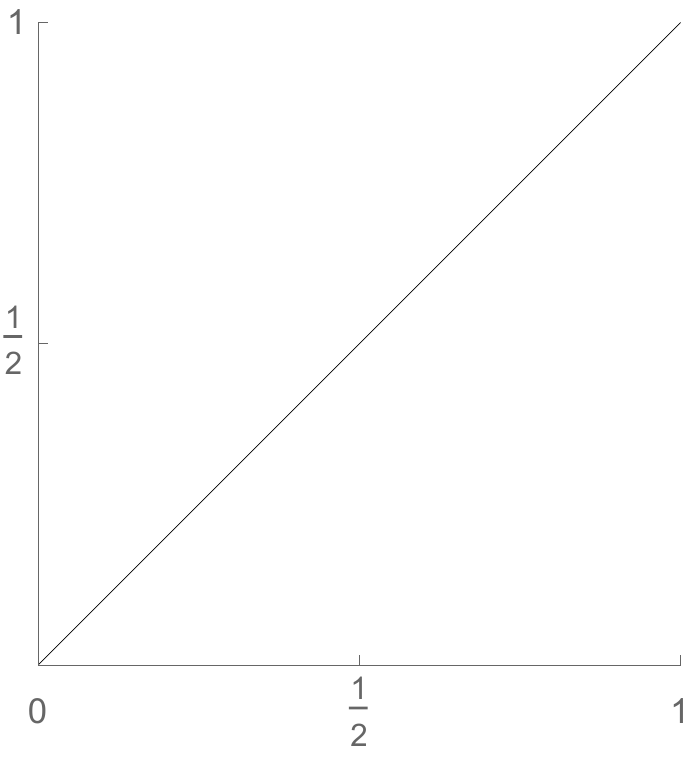} & \includegraphics[width=0.3\textwidth]{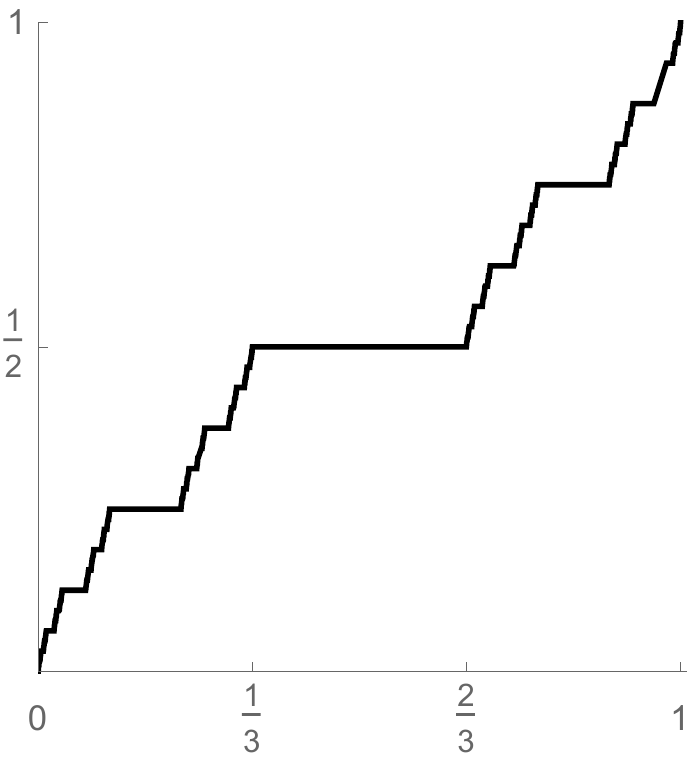}\tabularnewline
{\footnotesize{} $F_{\lambda}\left(x\right)=\lambda\left(\left[0,x\right]\right)$;
points of increase = the support of the normalized $\lambda$, so
the interval $[0,1]$. } & {\footnotesize{} $F_{\nicefrac{1}{3}}\left(x\right)=\mu_{3}\left(\left[0,x\right]\right)$;
points of increase = the support of $\mu_{3}$, so the middle third
Cantor set $C_{\nicefrac{1}{3}}$ (the Devil's staircase).}\tabularnewline
\end{tabular}\vspace{1em}

\caption{\label{fig:cum}The two \emph{cumulative distributions}, with support
sets $[0,1]$ and $C_{\nicefrac{1}{3}}$. }
\end{figure}

\begin{prop}
\label{prop:M3}Let $J=[0,\infty)$ denote the positive half-line,
and let $\left\{ B_{t}\right\} _{t\in J}$ be the standard Brownian
motion, i.e., $B_{t}\sim N\left(0,t\right)$, and 
\begin{equation}
\mathbb{E}\left(B_{s}B_{t}\right)=s\wedge t,\quad\forall s,t\in J\label{eq:M7}
\end{equation}
where $s\wedge t=\min\left(s,t\right)$. Let $h:J\rightarrow J$ be
a monotone (increasing) function such that $h\left(0\right)=0$, and
set $X=X^{\left(h\right)}$ given by 
\begin{equation}
X_{t}:=B_{h\left(t\right)},\quad t\in J.\label{eq:M8}
\end{equation}
\begin{enumerate}
\item Then $X_{t}$ is the Gaussian process determined by the following
induced covariance kernel:
\begin{equation}
\mathbb{E}\left(X_{s}X_{t}\right)=h\left(s\wedge t\right)
\end{equation}
 
\item \label{enu:M32}The \uline{quadratic variation measure} for $\{X_{t}^{\left(h\right)}\}_{t\in J}$
is 
\begin{equation}
d\mu\left(t\right)=h'\left(t\right)dt,\label{eq:M10}
\end{equation}
where $dt$ is the usual Lebesgue measure on $J$. 

(Recall that, since $h\left(s\right)\leq h\left(t\right)$ for all
$s,t$, $s\leq t$; it follows, by Lebesgue's theorem, that $h$ is
differentiable almost everywhere on $J$ with respect to $dt$.)

\end{enumerate}
\end{prop}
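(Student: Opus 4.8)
The plan is to reduce both parts of \propref{M3} to results already established for Gaussian fields indexed by $\mathscr{B}_{fin}$, via the change of variables induced by $h$. First I would observe that the covariance computation in (i) is immediate: from (\ref{eq:M8}) and (\ref{eq:M7}),
\[
\mathbb{E}\left(X_{s}X_{t}\right)=\mathbb{E}\left(B_{h\left(s\right)}B_{h\left(t\right)}\right)=h\left(s\right)\wedge h\left(t\right)=h\left(s\wedge t\right),
\]
where the last equality uses that $h$ is monotone increasing. Since a centered Gaussian process is determined by its covariance kernel, this pins down the law of $\{X_t\}$.

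For part (\ref{enu:M32}), the key step is to recognize the kernel $h(s\wedge t)$ as a special case of the $\beta_\mu$-type kernels of \secref{SA}. Concretely, let $d\mu(t)=h'(t)\,dt$ on $J$ (this is a $\sigma$-finite measure, and by Lebesgue's differentiation theorem for monotone functions $h'$ exists a.e.\ and is locally integrable with $\int_0^t h'\,ds\le h(t)$). Identifying the half-line process with its $\mathscr{B}_{fin}$-indexed version via $X_{[0,t]}:=X_t$ as in (\ref{eq:GI32}), one has for intervals
\[
\mathbb{E}\left(X_{[0,s]}X_{[0,t]}\right)=h\left(s\wedge t\right)\ge\int_0^{s\wedge t}h'(u)\,du=\mu\left([0,s]\cap[0,t]\right),
\]
and when $h$ is absolutely continuous this is an equality, so the covariance kernel is exactly $\beta_\mu(A,B)=\mu(A\cap B)$ from (\ref{eq:T8}). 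Then \corref{G3} applies directly: for the non-atomic measure $\mu$, the quadratic variation of $X^{(\mu)}$ over the net of partitions coincides with $\mu$ itself, giving (\ref{eq:M10}).

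Alternatively — and this handles the general monotone (not necessarily absolutely continuous) case uniformly — I would compute the quadratic variation directly along a partition $\pi=\{[t_{i-1},t_i]\}$ of $[0,t]$: the increments $X_{t_i}-X_{t_{i-1}}=B_{h(t_i)}-B_{h(t_{i-1})}$ are independent centered Gaussians with variance $h(t_i)-h(t_{i-1})$, so by the same second-moment estimate as in the proof of \corref{G3},
\[
\mathbb{E}_{}\left(\left|\sum_i\left(X_{t_i}-X_{t_{i-1}}\right)^2-\big(h(t)-h(0)\big)\right|^2\right)=2\sum_i\big(h(t_i)-h(t_{i-1})\big)^2\le 2\Big(\max_i\big(h(t_i)-h(t_{i-1})\big)\Big)h(t),
\]
which tends to $0$ as the mesh (measured by $\max_i(h(t_i)-h(t_{i-1}))$, equivalently by continuity of $h$ as $\max_i(t_i-t_{i-1})\to0$) tends to $0$. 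Hence the quadratic variation accumulated over $[0,t]$ is $h(t)=\int_0^t h'(u)\,du + (\text{singular part})$; in the absolutely continuous case this is $\int_0^t h'\,du$, i.e.\ the measure $h'(t)\,dt$ as claimed.

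The main obstacle is bookkeeping about regularity of $h$: if $h$ is merely monotone, its distributional derivative may have a singular part, and then "the quadratic variation measure is $h'(t)\,dt$" is literally false — the correct statement is that the QV measure is the Lebesgue--Stieltjes measure $dh$, of which $h'(t)\,dt$ is the absolutely continuous part. So the honest version of the proof either (a) adds the hypothesis that $h$ is absolutely continuous (equivalently, that $h(t)=\int_0^t h'$), under which the $\beta_\mu$ identification and \corref{G3} apply verbatim, or (b) states the conclusion in terms of $dh$ and invokes the mesh-refinement/net argument of \corref{G3} with $\mu$ replaced by the Stieltjes measure $dh$ (which is non-atomic precisely when $h$ is continuous). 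I would flag this and prove version (a), since that is what (\ref{eq:M10}) asserts, pointing to the displayed telescoping estimate above as the substitute for re-deriving \corref{G3}.
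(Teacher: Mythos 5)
Your argument is correct and follows essentially the same route as the paper: the covariance computation $\mathbb{E}\left(X_{s}X_{t}\right)=h\left(s\wedge t\right)=\mu\left(\left[0,s\right]\cap\left[0,t\right]\right)$ with $d\mu=h'\,dt$, followed by an appeal to \corref{G3}. Your regularity flag is well taken: the paper's own step $h\left(s\wedge t\right)=\int_{0}^{s\wedge t}h'\left(x\right)dx$ silently assumes $h$ absolutely continuous, and for a general monotone (continuous) $h$ the quadratic variation measure is the Stieltjes measure $dh$, of which $h'\,dt$ is only the absolutely continuous part, so your (a)/(b) dichotomy and the direct second-moment partition estimate are exactly the right repair.
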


\begin{rem}
Note that, if $h\left(t\right)=t^{2}$, then $\mathbb{E}\left(X_{s}X_{t}\right)=\left(s\wedge t\right)^{2}$;
see \figref{time} for an illustration.
\end{rem}

\begin{proof}
Since $h$ is monotone (increasing) and $h\left(0\right)=0$, we get
\begin{equation}
h\left(s\right)\wedge h\left(t\right)=h\left(s\wedge t\right),\quad\forall s,t\in J\label{eq:M11}
\end{equation}
and so the covariance kernel satisfies:
\begin{eqnarray*}
\mathbb{E}\left(X_{s}X_{t}\right) & = & \mathbb{E}\left(B_{h\left(s\right)}B_{h\left(t\right)}\right)\\
 & \underset{{\scriptscriptstyle \text{by \ensuremath{\left(\ref{eq:M7}\right)}}}}{=} & h\left(s\right)\wedge h\left(t\right)\underset{{\scriptscriptstyle \text{by \ensuremath{\left(\ref{eq:M11}\right)}}}}{=}h\left(s\wedge t\right)\\
 & = & \int_{0}^{s\wedge t}h'\left(x\right)dx=\mu\left(s\wedge t\right)\\
 & = & \mu\left(\left[0,s\right]\cap\left[0,t\right]\right)
\end{eqnarray*}
where $\mu$ is the measure given in (\ref{eq:M10}).

It now follows from \corref{G3} that then $\mu$ is indeed the quadratic
variation measure for $\{X_{t}^{\left(h\right)}\}_{t\in J}$, as asserted. 
\end{proof}
\begin{figure}[H]
\subfloat[$B_{t}$]{\includegraphics[width=0.4\textwidth]{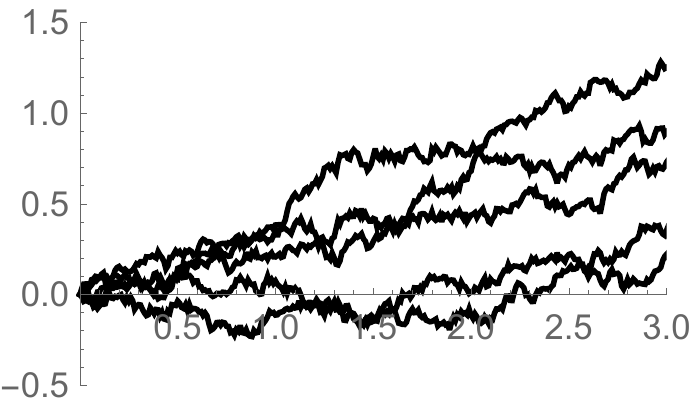}

}\hspace{2cm}\subfloat[$B_{t^{2}}$]{\includegraphics[width=0.4\textwidth]{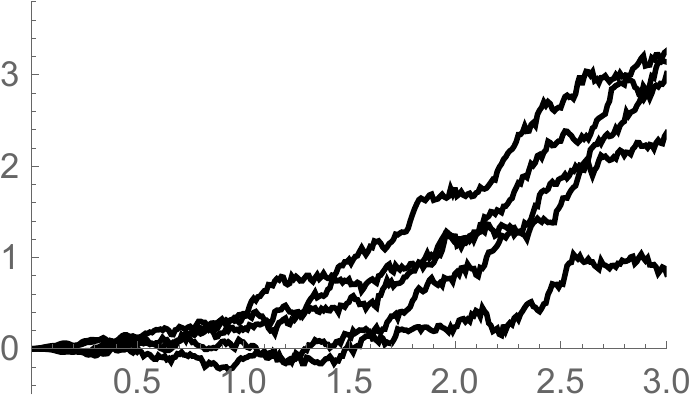}

}

\caption{\label{fig:time}Time-change of Brownian motion}

\end{figure}

\begin{cor}
\label{cor:M5}Let $h:J\rightarrow J$ be as in \propref{M3}, i.e.,
$h\left(0\right)=0$, $h\left(s\right)\leq h\left(t\right)$, for
$s\leq t$; and, as in (\ref{eq:M8}), consider:
\begin{equation}
X_{t}=B_{h\left(t\right)},\quad t\in J.
\end{equation}
Let $f:\mathbb{R}\rightarrow\mathbb{R}$ be given, assumed twice differentiable.
Then the It\^{o}-integral formula for $f\left(X_{t}\right)$ is as
follows: For $t>0$, we have:
\begin{equation}
f\left(X_{t}\right)=\int_{0}^{t}f'\left(X_{s}\right)dX_{s}+\frac{1}{2}\int_{0}^{t}f''\left(X_{s}\right)h'\left(s\right)ds.\label{eq:M13}
\end{equation}
\end{cor}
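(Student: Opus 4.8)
The plan is to reduce the statement to the classical Itô formula for Brownian motion and then transport it through the deterministic time change $t \mapsto h(t)$. First I would observe that $X_t = B_{h(t)}$ is, by Proposition \ref{prop:M3}, a continuous Gaussian martingale (or at least a continuous semimartingale) whose quadratic variation measure is $d\mu(t) = h'(t)\,dt$; equivalently, $\langle X\rangle_t = h(t)$. This is precisely the datum needed to invoke the general Itô formula for continuous semimartingales: for $f \in C^2(\mathbb{R})$,
\begin{equation}
f(X_t) - f(X_0) = \int_0^t f'(X_s)\,dX_s + \frac{1}{2}\int_0^t f''(X_s)\,d\langle X\rangle_s.
\end{equation}
Substituting $d\langle X\rangle_s = h'(s)\,ds$ (from part \ref{enu:M32} of Proposition \ref{prop:M3}) into the second integral yields exactly \eqref{eq:M13}, noting that $X_0 = B_{h(0)} = B_0 = 0$ so $f(X_0)$ contributes only a constant that one absorbs (or one states the formula in increment form $f(X_t)-f(X_0)$).

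To make this self-contained within the paper's framework rather than merely citing the general semimartingale Itô formula, I would instead argue by a change of variables in the classical Brownian Itô formula. Apply the standard Itô formula to $f(B_u)$:
\begin{equation}
f(B_u) = f(0) + \int_0^u f'(B_r)\,dB_r + \frac{1}{2}\int_0^u f''(B_r)\,dr.
\end{equation}
Now set $u = h(t)$. The ordinary (Lebesgue) integral transforms by the substitution $r = h(s)$, $dr = h'(s)\,ds$, giving $\frac{1}{2}\int_0^t f''(B_{h(s)}) h'(s)\,ds = \frac{1}{2}\int_0^t f''(X_s) h'(s)\,ds$. For the stochastic integral one uses the time-change rule for Itô integrals: if $h$ is continuous and increasing with $h(0)=0$, then $\int_0^{h(t)} \phi_r\,dB_r = \int_0^t \phi_{h(s)}\,dB_{h(s)} = \int_0^t \phi_{h(s)}\,dX_s$ for suitable integrands $\phi$; applied with $\phi_r = f'(B_r)$ this gives $\int_0^{h(t)} f'(B_r)\,dB_r = \int_0^t f'(X_s)\,dX_s$. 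Combining the two transformed pieces produces \eqref{eq:M13}.

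The main obstacle is making the time-change of the stochastic integral rigorous when $h$ is merely monotone increasing with $h(0)=0$ — in particular $h$ need not be continuous, strictly increasing, or absolutely continuous, so $h'$ may only account for the absolutely continuous part of $h$, and $X_t = B_{h(t)}$ may have jumps or intervals of constancy. The clean statement of \eqref{eq:M13} implicitly requires enough regularity on $h$ (continuity, and $h(t) = \int_0^t h'(s)\,ds$, i.e.\ $h$ absolutely continuous) for the quadratic variation to equal $\int_0^\cdot h'(s)\,ds$ and for the substitution $dr = h'(s)\,ds$ to be valid; I would state this hypothesis explicitly (as is implicit in Proposition \ref{prop:M3}\ref{enu:M32}, whose proof already invokes $\mu([0,t]) = \int_0^t h'(x)\,dx$) and then the time-change theorem for continuous local martingales (e.g.\ via the Dambis--Dubins--Schwarz circle of ideas, or directly since $X$ is Gaussian with the prescribed bracket) applies without difficulty. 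The remaining steps — the classical Itô formula and the deterministic change of variables — are routine.
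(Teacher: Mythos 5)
Your first paragraph is exactly the paper's own argument: the paper's proof simply invokes It\^{o}'s lemma for a continuous semimartingale and substitutes the quadratic variation $d\mu(s)=h'(s)\,ds$ established in Proposition \ref{prop:M3}(\ref{enu:M32}), so your proposal is correct and takes essentially the same route. The additional change-of-variables derivation via the classical Brownian It\^{o} formula, and your explicit remark that $h$ must in effect be absolutely continuous (so that $h(t)=\int_0^t h'(s)\,ds$, as is implicitly used in the proof of Proposition \ref{prop:M3}), go beyond the paper's one-line proof but only make explicit hypotheses the paper leaves tacit.
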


\begin{proof}
The result is immediate from It\^{o}'s lemma applied to the quadratic
variation term on the right-hand side in (\ref{eq:M13}). 

Recall, we proved in \propref{M3} (\ref{enu:M32}), eq. (\ref{eq:M10})
that the quadratic variation of a Gaussian process with covariance
measure $\mu$ is $\mu$ itself. Hence (\ref{eq:M13}) follows from
a direct application to $d\mu\left(s\right)=h'\left(s\right)ds$,
where $ds$ is standard Lebesgue measure on the interval $J$.
\end{proof}
\begin{cor}
Let $h:J\rightarrow J$, $h\left(0\right)=0$, $h$ monotone be as
specified as \corref{M5}, and let $X_{t}=B_{h\left(t\right)}$ be
the corresponding time-change process. 

Set $d\mu=dh=\left(\text{the Stieltjes measure}\right)=h'\left(t\right)dt$;
see (\ref{eq:M13}). For $\left(t,x\right)\in J\times J$, and $f\in L^{2}\left(\mu\right)$,
let
\begin{equation}
u\left(t,x\right)=\mathbb{E}_{X_{0}=x}\left(f\left(X_{t}\right)\right).\label{eq:M14}
\end{equation}
Then $u$ satisfies the following diffusion equation
\begin{equation}
\frac{\partial}{\partial t}u\left(t,x\right)=\frac{1}{2}h'\left(t\right)\frac{\partial^{2}}{\partial x^{2}}u\left(t,x\right),\label{eq:M15}
\end{equation}
with boundary condition
\[
u\left(t,\cdot\right)\big|_{t=0}=f\left(\cdot\right).
\]
\end{cor}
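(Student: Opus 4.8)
The plan is to identify $u(t,\cdot)$ with the Gaussian (heat) semigroup evaluated at the ``time'' $h(t)$, and then transport the classical heat equation through the substitution $t\mapsto h(t)$. Since $X_{0}=x$ and $X_{t}=B_{h(t)}$ with $B$ a standard Brownian motion, the conditional law of $X_{t}$ given $X_{0}=x$ is $N(x,h(t))$; hence
\[
u(t,x)=\int_{\mathbb{R}}g_{h(t)}(x-y)f(y)\,dy=:\bigl(P_{h(t)}f\bigr)(x),\qquad g_{s}(z)=\frac{1}{\sqrt{2\pi s}}\,e^{-z^{2}/(2s)},
\]
so $u(t,\cdot)=P_{h(t)}f$ for the heat semigroup $(P_{s})_{s\ge 0}$ with generator $\tfrac12\,\partial_{x}^{2}$. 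This integral representation is the only probabilistic input needed; alternatively one obtains it from the It\^o formula of \corref{M5} by applying $\mathbb{E}_{X_{0}=x}(\cdot)$ and using that the stochastic-integral term in (\ref{eq:M13}) is a mean-zero martingale.

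First I would record the two elementary facts that do the work: $\partial_{s}g_{s}(z)=\tfrac12\,\partial_{z}^{2}g_{s}(z)$ for $s>0$, and $\partial_{z}^{2}g_{s}(x-y)=\partial_{x}^{2}g_{s}(x-y)$. Then, for $f$ in the admissible class (twice differentiable as in \corref{M5}, with enough decay or boundedness that the Gaussian integrals and their $x$- and $s$-derivatives converge, e.g.\ $f$ bounded $C^{2}$ or Schwartz), I would differentiate $u(t,x)=\int g_{h(t)}(x-y)f(y)\,dy$ under the integral sign. By Lebesgue's theorem --- already invoked in \propref{M3} --- $h$ is differentiable for a.e.\ $t$, and at such $t$ the chain rule and the heat equation give
\[
\partial_{t}u(t,x)=h'(t)\,\bigl(\partial_{s}g_{s}\bigr)\big|_{s=h(t)}*f=\tfrac12\,h'(t)\int_{\mathbb{R}}\partial_{x}^{2}g_{h(t)}(x-y)f(y)\,dy=\tfrac12\,h'(t)\,\partial_{x}^{2}u(t,x),
\]
which is (\ref{eq:M15}). (Via the It\^o route one instead gets the integral identity $u(t,x)=f(x)+\tfrac12\int_{0}^{t}h'(s)\,\mathbb{E}_{X_{0}=x}[f''(X_{s})]\,ds$, together with $\mathbb{E}_{X_{0}=x}[f''(X_{s})]=\partial_{x}^{2}u(s,x)$ since differentiation passes through the convolution $P_{h(s)}$; differentiating in $t$ recovers (\ref{eq:M15}).) The initial condition is then immediate: as $t\downarrow 0$ we have $h(t)\downarrow h(0)=0$, hence $g_{h(t)}\to\delta_{0}$ and $P_{h(t)}f\to f$, pointwise at continuity points of $f$ and in $L^{2}(\mu)$ in general.

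The part I expect to be the genuine obstacle is not any single estimate but fixing the precise function class and the exact sense of the conclusion. On one hand, differentiating under the integral and the identity $\mathbb{E}[f''(X_{s})]=\partial_{x}^{2}\mathbb{E}[f(X_{s})]$ require uniform Gaussian control, so the clean statement is first proved for bounded $C^{2}$ (or Schwartz) $f$ and then extended to $f\in L^{2}(\mu)$ by a routine density and contractivity argument. On the other hand, $d\mu=dh$ need not be absolutely continuous (witness the Cantor example just above), so ``$h'(t)\,dt$'' and the variance $h(t)=\int_{0}^{t}h'$ agree only when $h$ is absolutely continuous; in the singular case (\ref{eq:M15}) must be read as holding for a.e.\ $t$ --- where both sides vanish even though $t\mapsto u(t,\cdot)$ is genuinely non-constant, exactly mirroring the Devil's-staircase behaviour of $h$ itself.
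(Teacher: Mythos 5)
Your argument is correct, and your primary route differs from the paper's. The paper proves the corollary by applying $\mathbb{E}_{X_{0}=x}$ to the It\^{o} formula (\ref{eq:M13}) of \corref{M5}: the stochastic-integral term has mean zero, which yields $u(t,x)=\tfrac12\int_{0}^{t}\mathbb{E}_{X_{0}=x}\left(f''(X_{s})\right)h'(s)\,ds$ (plus the initial term $f(x)$, which the paper suppresses), and then differentiates in $t$ to get (\ref{eq:M15}) --- this is exactly the alternative you sketch parenthetically. Your main proof instead bypasses \corref{M5} entirely: you identify $u(t,\cdot)=P_{h(t)}f$ as the heat semigroup run to time $h(t)$, and transport the heat equation for the Gaussian kernel through the chain rule at points where $h'$ exists. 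What your route buys is self-containedness and, more importantly, an explicit justification of the identity $\mathbb{E}_{X_{0}=x}\left(f''(X_{t})\right)=\partial_{x}^{2}u(t,x)$ (via differentiation through the convolution), which the paper's one-line passage from the integral identity to (\ref{eq:M15}) uses tacitly; what the paper's route buys is brevity and direct continuity with \corref{M5} and \corref{G3}. Your added caveats are well taken and do not affect correctness: the admissible class of $f$ must permit differentiation under the Gaussian integral (the paper is equally silent on this), $h'$ exists only $dt$-a.e.\ by \propref{M3}, and the identification $dh=h'(t)\,dt$ --- hence the formula $h(t)=\int_{0}^{t}h'$ used implicitly in the variance --- requires $h$ absolutely continuous, a hypothesis the paper itself needs but does not state.
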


\begin{proof}
The assertion follows from an application of the conditional expectation
$\mathbb{E}_{X_{0}=x}$ to both sides in (\ref{eq:M13}). Since the
expectation of the first of the two terms on the right-hand side in
(\ref{eq:M13}) vanishes, we get from the definition (\ref{eq:M14})
that:
\[
u\left(t,x\right)=\frac{1}{2}\int_{0}^{t}\mathbb{E}_{X_{0}=x}\left(f''\left(X_{s}\right)\right)h'\left(s\right)ds,
\]
and so 
\[
\frac{\partial}{\partial t}u\left(t,x\right)=\frac{1}{2}h'\left(t\right)\frac{\partial^{2}}{\partial x^{2}}u\left(t,x\right)
\]
as claimed in (\ref{eq:M15}). The remaining conclusions in the corollary
are immediate.
\end{proof}
\begin{rem}
Let $0<H<1$ be fixed, and set 
\begin{equation}
h\left(t\right):=t^{2H},\quad t\in J.
\end{equation}
Then the corresponding process 
\[
X_{t}^{\left(H\right)}:=B_{t^{2H}},\quad t\in J,
\]
is a time-changed process, as discussed in \propref{M3}. We have
\begin{equation}
\mathbb{E}\left((X_{t}^{\left(H\right)})^{2}\right)=t^{2H}.
\end{equation}

Now this is the same variance as the fractional Brownian motion $Y_{t}^{\left(H\right)}$;
but we stress that (when $H$ is fixed, $H\neq1/2$), then the two
Gaussian processes $X_{t}^{\left(H\right)}$ (time-change), and $Y_{t}^{\left(H\right)}$
(fractional Brownian motion with Hurst parameter $H$), are different.
(See \figref{fBM}.) 

\renewcommand{\arraystretch}{1.5}

\begin{figure}
\begin{tabular}{|c|c|}
\hline 
$H=1/3$ & $H=2/3$\tabularnewline
\hline 
\includegraphics[width=0.4\textwidth]{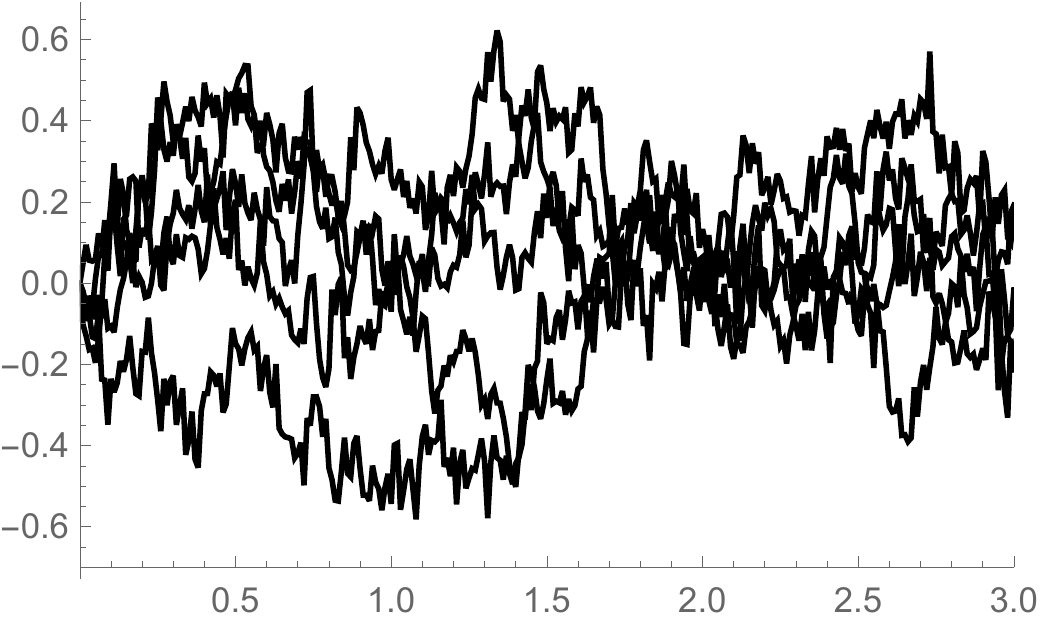} & \includegraphics[width=0.4\textwidth]{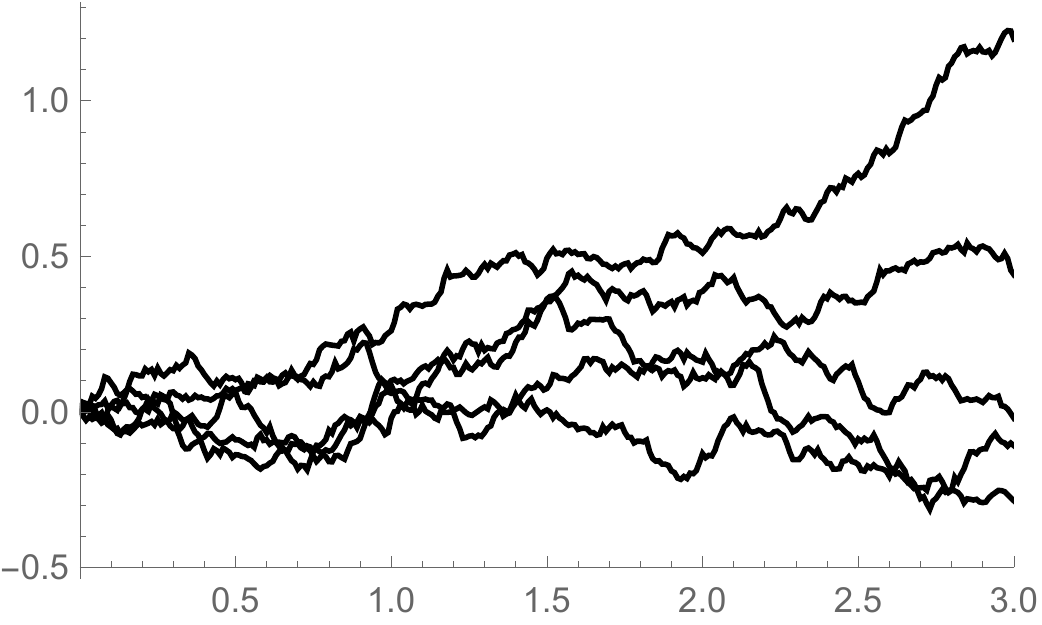}\tabularnewline
$Y_{t}^{\left(H\right)}$ & $Y_{t}^{\left(H\right)}$\tabularnewline
\hline 
 & \tabularnewline
\includegraphics[width=0.4\textwidth]{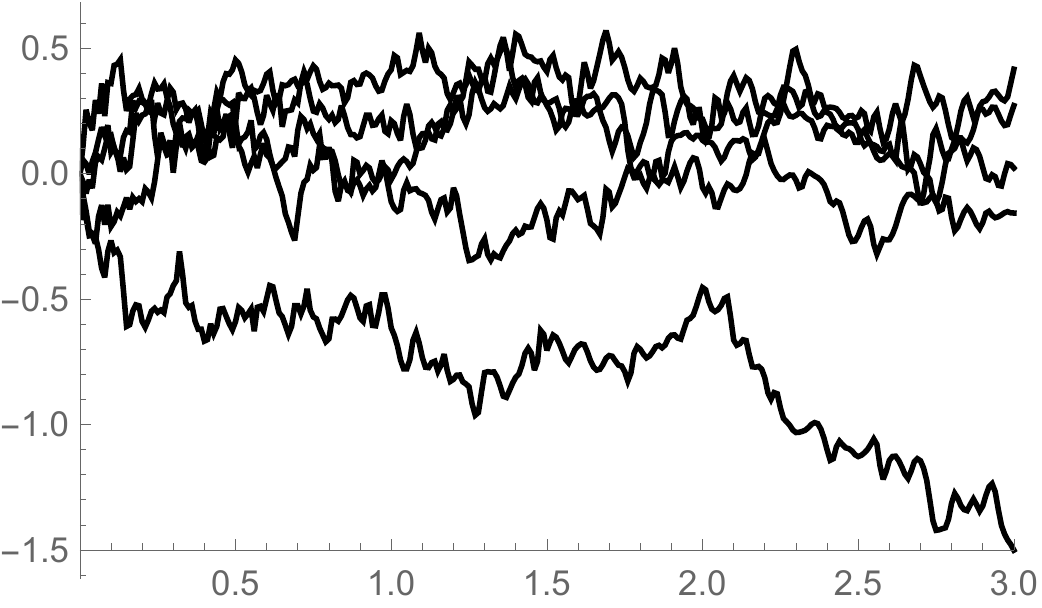} & \includegraphics[width=0.4\textwidth]{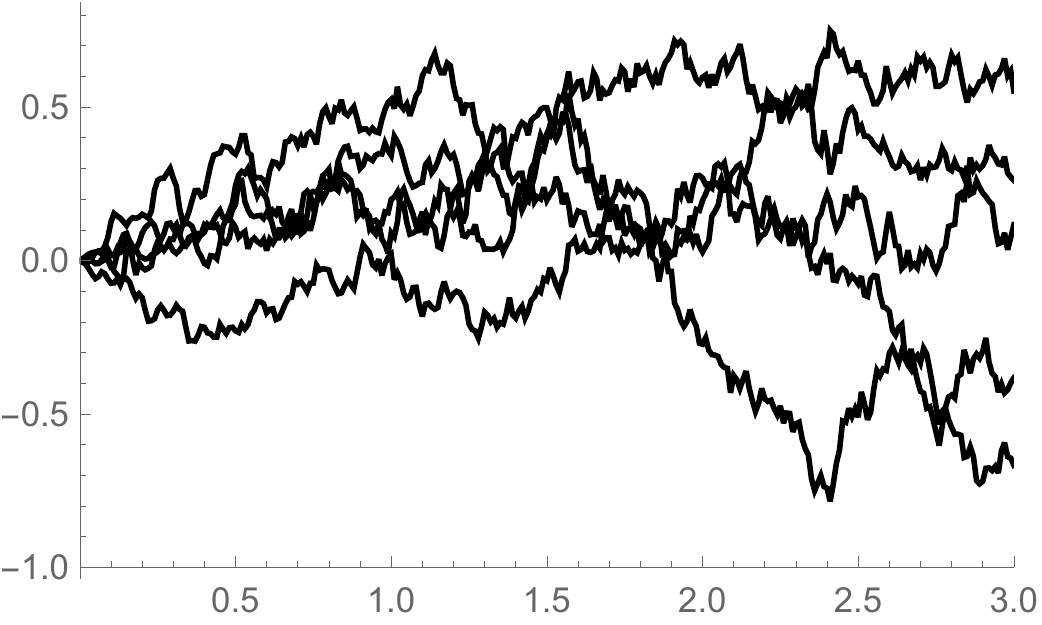}\tabularnewline
$X_{t}^{\left(H\right)}=B_{t^{2H}}$ & $X_{t}^{\left(H\right)}=B_{t^{2H}}$\tabularnewline
\hline 
\end{tabular}\vspace{5pt}

\caption{\label{fig:fBM}Fractional Brownian motion. The top two figures are
sample paths of fractional Brownian motion, while the bottom two are
the corresponding processes resulting from time change in the standard
Brownian motion.}
\end{figure}

\renewcommand{\arraystretch}{1}

The reason is that the two covariance kernels are difference. Indeed,
when $H\neq1/2$, 
\begin{equation}
\underset{{\scriptscriptstyle \mathbb{E}\left(X_{s}^{\left(H\right)}X_{t}^{\left(H\right)}\right)}}{\underbrace{\left(s\wedge t\right)^{2H}}}\neq\underset{{\scriptscriptstyle \mathbb{E}\left(Y_{s}^{\left(H\right)}Y_{t}^{\left(H\right)}\right)}}{\underbrace{\tfrac{1}{2}(s^{2H}+t^{2H}-\left|s-t\right|^{2H})}};\label{eq:M16}
\end{equation}
i.e., the two functions from (\ref{eq:M16}) are different on $J\times J$. 
\end{rem}

\begin{rem}
For general facts on fractional Brownian motion, and Hurst parameter,
see e.g., \cite{MR2793121} and \cite{MR1921068}.
\end{rem}

\section{Laplacians}

The purpose of the present section is to show that there is an important
class of Laplace operators, and associated energy Hilbert spaces $\mathscr{H}$,
which satisfies the conditions in our results from Sections \ref{sec:SigA}
and \ref{sec:SA} above. Starting with a fixed sigma-finite measure
$\mu$, the setting from sect \ref{sec:SA} entails pairs $(\beta,\mathscr{H})$,
subject to conditions (\ref{eq:S2}) and (\ref{eq:S4}), which admit
a certain spectral theory. With the condition in \thmref{T1}, we
showed that there are then induced sigma-finite measures $\mu_{f}$,
indexed by $f$ in a dense subspace in $\mathscr{H}$. The key consideration
implied by this is a closable, densely defined, operator $T$ from
$L^{2}(\mu)$ into $\mathscr{H}$. The induced measures $\mu_{f}$
are then indexed by $f$ in $dom(T^{*})$, the dense domain of the
adjoint operator $T^{*}$. If $\mathscr{H}$ is one of the energy
Hilbert spaces, then $T^{*}$ will be an associated Laplacian; see
details in \propref{L4}.

Now the Laplacians we introduce include variants from both discrete
network analysis, and more classical Laplacians from harmonic analysis.
As well as more abstract Laplacians arising in potential theory. There
is a third reason for the relevance of such new classes of Laplace-operators:
Each one of these Laplacians corresponds to a reversible Markov process
(and vice versa.) The latter interconnection will be addressed at
the end of section, but the more detailed implications, following
from it, will be postponed to future papers. As for the research literature,
it is fair to say that papers on reversible Markov processed far outnumber
those dealing with generalized Laplacians.

Let $\left(M,\mathscr{B},\mu\right)$ be a fixed $\sigma$-finite
positive measure, and let $\rho$ be a \emph{symmetric} positive measure
on the product space $\left(M\times M,\mathscr{B}_{2}\right)$ where
$\mathscr{B}_{2}$ denotes the product $\sigma$-algebra on $M\times M$,
i.e., the $\sigma$-algebra of subsets of $M\times M$ generated by
the cylinder sets 
\begin{equation}
\left\{ A\times B\mid A,B\in\mathscr{B}\right\} .
\end{equation}

We assume that $\rho$ admits a disintegration with $\mu$ as marginal
measure: 
\begin{equation}
d\rho\left(x,y\right)=\rho^{\left(x\right)}\left(dy\right)d\mu\left(x\right);\label{eq:L2}
\end{equation}
equivalently, 
\begin{equation}
\rho\left(A\times B\right)=\int_{A}\rho^{\left(x\right)}\left(B\right)d\mu\left(x\right),\label{eq:L3}
\end{equation}
$\forall A,B\in\mathscr{B}$. Note that since $\rho$ is symmetric,
we also have a field of measures $\rho^{\left(y\right)}\left(dx\right)$
such that 
\begin{equation}
\rho\left(A\times B\right)=\int_{B}\rho^{\left(y\right)}\left(A\right)d\mu\left(y\right).\label{eq:L4}
\end{equation}
For the theory of disintegration of measures, we refer to \cite{MR3441734,zbMATH06897817}
and the papers cited there.

Let $\pi_{i}$, $i=1,2$, denote the coordinate projections 
\[
\pi_{1}\left(x,y\right)=x,\quad\text{and}\quad\pi_{2}\left(x,y\right)=y,
\]
for $\left(x,y\right)\in M\times M$. Then from the assumptions above,
we get 
\[
\mu=\rho\circ\pi_{1}^{-1}=\rho\circ\pi_{2}^{-1}.
\]

We shall finally assume that
\begin{equation}
\rho\left(A\times M\right)<\infty,\quad\forall A\in\mathscr{B}_{fin}\label{eq:L5}
\end{equation}
where $\mathscr{B}_{fin}=\left\{ A\in\mathscr{B}\mid\mu\left(A\right)<\infty\right\} $;
and we set 
\begin{equation}
c\left(x\right)=\rho^{\left(x\right)}\left(M\right),\quad x\in M,\label{eq:L6}
\end{equation}
where the measures $\rho^{\left(x\right)}$ are the slice measures
from the disintegration formula (\ref{eq:L2}), or equivalently (\ref{eq:L3}).
We note that assumption (\ref{eq:L5}) may be relaxed. For the results
proved below, it will be enough to assume only that the function $c(x)$
defined by the RHS in (\ref{eq:L6}) be finite for almost all $x$,
so for a.a. $x$ with respect to the measure $\mu$. See (\ref{eq:L3}).

We shall need the measure $\nu$, given by 
\begin{equation}
d\nu\left(x\right)=c\left(x\right)d\mu\left(x\right).\label{eq:L7}
\end{equation}

Given a pair $\left(\mu,\rho\right)$, as above, set 
\begin{equation}
\left(Rf\right)\left(x\right)=\int_{M}f\left(y\right)\rho^{\left(x\right)}\left(dy\right),
\end{equation}
defined on all measurable functions $f$ on $\left(M,\mathscr{B}\right)$. 

The associated \emph{Laplacian} (\emph{Laplace operator}) is as follows:
\begin{align}
\left(\Delta f\right)\left(x\right) & =\int_{M}\left(f\left(x\right)-f\left(y\right)\right)\rho^{\left(x\right)}\left(dy\right)\nonumber \\
 & =c\left(x\right)f\left(x\right)-\left(Rf\right)\left(x\right).\label{eq:L9}
\end{align}

\begin{defn}
\label{def:T1}Let $\left(\mu,\rho\right)$ be as above, and let $\mathscr{E}$
be the associated energy Hilbert space consisting of measurable functions
$f$ on $\left(M,\mathscr{B}\right)$ such that 
\begin{equation}
\left\Vert f\right\Vert _{\mathscr{E}}^{2}=\frac{1}{2}\iint_{M\times M}\left|f\left(x\right)-f\left(y\right)\right|^{2}d\rho\left(x,y\right)<\infty;\label{eq:L10}
\end{equation}
modulo functions $f$ s.t. $\text{RHS}_{\left(\ref{eq:L10}\right)}=0$.
\end{defn}

\begin{lem}
\label{lem:L2}Let a fixed pair $\left(\mu,\rho\right)$ be as above;
and let $\nu$ be the induced measure on $\left(M,\mathscr{B}\right)$
given by (\ref{eq:L7}).
\begin{enumerate}
\item Then condition (\ref{eq:S2}) is satisfied for $\mathscr{H}=\mathscr{E}$
(the energy Hilbert space), and with 
\begin{equation}
\left\langle f,g\right\rangle _{\mathscr{E}}=\frac{1}{2}\iint_{M\times M}\left(f\left(x\right)-f\left(y\right)\right)\left(g\left(x\right)-g\left(y\right)\right)d\rho\left(x,y\right),\label{eq:L11}
\end{equation}
we have, for $A,B\in\mathscr{B}_{fin}$: 
\begin{equation}
\left\langle \chi_{A},\chi_{B}\right\rangle _{\mathscr{E}}=\nu\left(A\cap B\right)-\rho\left(A\times B\right),
\end{equation}
and for $A=B$, 
\begin{equation}
\left\Vert \chi_{A}\right\Vert _{\mathscr{E}}^{2}=\nu\left(A\right)-\rho\left(A\times A\right).
\end{equation}
\item If $\varphi\in\mathscr{D}_{fin}\left(\mu\right)$, and $f\in\mathscr{E}$,
then 
\begin{equation}
\left\langle \varphi,f\right\rangle _{\mathscr{E}}=\int_{M}\varphi\left(x\right)\left(\Delta f\right)\left(x\right)d\mu\left(x\right).\label{eq:T14}
\end{equation}
\end{enumerate}
\end{lem}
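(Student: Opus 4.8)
The plan is to obtain both parts by direct computation, reducing each identity — via bilinearity of $\langle\cdot,\cdot\rangle_{\mathscr{E}}$ and the symmetry of $\rho$ — to the disintegration formula (\ref{eq:L2})/(\ref{eq:L3}) together with the finiteness hypothesis (\ref{eq:L5}).

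For part (i), I would first note that for $A\in\mathscr{B}_{fin}$ one has the pointwise identity $|\chi_A(x)-\chi_A(y)|^2=\chi_A(x)\chi_{M\setminus A}(y)+\chi_{M\setminus A}(x)\chi_A(y)$, so by the symmetry of $\rho$,
\[
\|\chi_A\|_{\mathscr{E}}^2=\rho\big(A\times(M\setminus A)\big)=\rho(A\times M)-\rho(A\times A).
\]
Disintegration (\ref{eq:L3}) together with $\rho^{(x)}(M)=c(x)$ from (\ref{eq:L6}) gives $\rho(A\times M)=\int_A c\,d\mu=\nu(A)$, which is finite by (\ref{eq:L5}); hence $\chi_A\in\mathscr{E}$, so (\ref{eq:S2}) holds for $\mathscr{H}=\mathscr{E}$, and the stated formula for $\|\chi_A\|_{\mathscr{E}}^2$ drops out. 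For the off-diagonal inner product I would expand $(\chi_A(x)-\chi_A(y))(\chi_B(x)-\chi_B(y))$ into its four monomials: integrating out the free variable using $\rho^{(x)}(M)=c(x)$, the two terms $\chi_A(x)\chi_B(x)$ and $\chi_A(y)\chi_B(y)$ each contribute $\nu(A\cap B)$, while the two cross terms $\chi_A(x)\chi_B(y)$ and $\chi_A(y)\chi_B(x)$ each contribute $\rho(A\times B)$ by symmetry; the prefactor $\tfrac12$ then yields $\langle\chi_A,\chi_B\rangle_{\mathscr{E}}=\nu(A\cap B)-\rho(A\times B)$. The Tonelli/disintegration steps are licit because $\mu$ is $\sigma$-finite and $\rho(A\times M)<\infty$.

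For part (ii), by linearity in the first argument it suffices to prove (\ref{eq:T14}) for $\varphi=\chi_A$, $A\in\mathscr{B}_{fin}$. The integrand $(\chi_A(x)-\chi_A(y))(f(x)-f(y))$ is supported on $(A\times(M\setminus A))\cup((M\setminus A)\times A)$, and using symmetry of $\rho$ to fold the second piece onto the first gives $\langle\chi_A,f\rangle_{\mathscr{E}}=\int_{A\times(M\setminus A)}(f(x)-f(y))\,d\rho(x,y)$. Since $\int_{A\times A}(f(x)-f(y))\,d\rho=0$ (again by symmetry, the integrand being antisymmetric on the symmetric set $A\times A$), this equals $\int_{A\times M}(f(x)-f(y))\,d\rho(x,y)$; disintegrating and recognising $c(x)f(x)=f(x)\rho^{(x)}(M)$ and $(Rf)(x)=\int f\,d\rho^{(x)}$, the right-hand side becomes $\int_A\big(c(x)f(x)-(Rf)(x)\big)\,d\mu(x)=\int_M\chi_A\,(\Delta f)\,d\mu$ by (\ref{eq:L9}), which is (\ref{eq:T14}).

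The step requiring the most care — and the one I would flag as the main obstacle — is the integrability bookkeeping underlying the manipulations in (ii): one may split the integral over $A\times M$, fold $(M\setminus A)\times A$ onto $A\times(M\setminus A)$, and disintegrate only after checking each piece lies in $L^1(\rho)$. This is where the hypotheses combine: $(x,y)\mapsto f(x)-f(y)$ lies in $L^2(\rho)$ because $\|f\|_{\mathscr{E}}<\infty$, and $\rho(A\times M)<\infty$ by (\ref{eq:L5}), so Cauchy--Schwarz places $(f(x)-f(y))\chi_{A\times M}$ and $(f(x)-f(y))\chi_{A\times A}$ in $L^1(\rho)$; the same estimate, with the $\sigma$-finiteness of $\mu$, also shows $\Delta f$ is defined $\mu$-a.e.\ on $A$ so that the right-hand side of (\ref{eq:T14}) makes sense. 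Once the integrability is secured, every identity above is a routine application of Fubini/Tonelli and the defining formulas (\ref{eq:L2}), (\ref{eq:L6}), (\ref{eq:L9}).
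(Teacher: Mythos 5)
Your proposal is correct and is essentially the argument the paper intends: the paper's own proof is only a one-line sketch (``direct computation''), and your expansion of the integrands, use of the symmetry of $\rho$, the disintegration formula (\ref{eq:L3}) with $c(x)=\rho^{(x)}(M)$, and the Cauchy--Schwarz integrability check supply exactly that computation in full detail. No gaps.
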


\begin{proof}[Proof sketch]
 Most of the assertions follow by direct computation, using the results
in Sections \ref{sec:SigA}--\ref{sec:SA} above; see also \cite{MR2159774,MR3441734,MR3701541,MR3630401,MR3796644},
and the papers cited there.
\end{proof}
\begin{cor}
Let the pair $\left(\mu,\rho\right)$ be as stated in \lemref{L2},
and let $\nu$ be the measure $d\nu\left(x\right)=c\left(x\right)d\mu\left(x\right)$
where $c\left(x\right)=\rho^{\left(x\right)}\left(M\right)$ as in
(\ref{eq:L7}). On $\mathscr{B}_{fin}\times\mathscr{B}_{fin}$, set
\begin{equation}
\beta\left(A,B\right)=\nu\left(A\cap B\right)-\rho\left(A\times B\right).
\end{equation}

Then $\beta$ is positive definite, and the corresponding RKHS $\mathscr{H}\left(\beta\right)$
naturally and isometrically, embeds as a closed subspace in the energy
Hilbert space $\mathscr{E}$ from (\ref{eq:L11}).
\end{cor}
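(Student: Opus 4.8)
The plan is to obtain everything directly from part (1) of \lemref{L2}, which already carries out the only nontrivial computation. By that lemma, with $\mathscr{H}=\mathscr{E}$ the energy Hilbert space, condition (\ref{eq:S2}) holds (indicators $\chi_A$, $A\in\mathscr{B}_{fin}$, lie in $\mathscr{E}$), and moreover
\[
\left\langle\chi_A,\chi_B\right\rangle_{\mathscr{E}}=\nu\left(A\cap B\right)-\rho\left(A\times B\right)=\beta\left(A,B\right),\qquad\forall A,B\in\mathscr{B}_{fin}.
\]
Thus the pair $\left(\beta,\mathscr{E}\right)$ satisfies condition (\ref{eq:S4}) of \thmref{S2}, and the implication (\ref{enu:S2})$\Rightarrow$(\ref{enu:S1}) there yields positive definiteness of $\beta$ at once; concretely, for all $n\in\mathbb{N}$, $\left\{\alpha_i\right\}_1^n\subset\mathbb{R}$, $\left\{A_i\right\}_1^n\subset\mathscr{B}_{fin}$,
\[
\sum_i\sum_j\alpha_i\alpha_j\beta\left(A_i,A_j\right)=\Big\|\sum_i\alpha_i\chi_{A_i}\Big\|_{\mathscr{E}}^{2}\ge0 .
\]

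For the embedding I would run the construction in Case (\ref{enu:S1})$\Rightarrow$(\ref{enu:S2}) of \thmref{S2} in reverse. Recall that $\mathscr{H}\left(\beta\right)$ is the Hilbert completion of $\operatorname{span}\left\{\beta\left(A,\cdot\right)\mid A\in\mathscr{B}_{fin}\right\}$ under $\big\|\sum_i\alpha_i\beta\left(A_i,\cdot\right)\big\|_{\mathscr{H}\left(\beta\right)}^{2}=\sum_{i,j}\alpha_i\alpha_j\beta\left(A_i,A_j\right)$. Define $W$ on this dense subspace by $W\left(\beta\left(A,\cdot\right)\right):=\chi_A$, extended by linearity. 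The identity
\[
\Big\|\sum_i\alpha_i\beta\left(A_i,\cdot\right)\Big\|_{\mathscr{H}\left(\beta\right)}^{2}=\sum_i\sum_j\alpha_i\alpha_j\beta\left(A_i,A_j\right)=\Big\|\sum_i\alpha_i\chi_{A_i}\Big\|_{\mathscr{E}}^{2}
\]
shows at one stroke that $W$ is well defined on the quotient used to build $\mathscr{H}\left(\beta\right)$ (a null vector maps to a null vector) and that it is isometric; hence $W$ extends uniquely to an isometry $W:\mathscr{H}\left(\beta\right)\hookrightarrow\mathscr{E}$.

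Finally, since $W$ is an isometry out of a complete space, its range is closed in $\mathscr{E}$; by construction that range is the closed linear span in $\mathscr{E}$ of $\left\{\chi_A\mid A\in\mathscr{B}_{fin}\right\}$, and $W$ is the canonical map, which proves the corollary. I do not expect a genuine obstacle here: the delicate step --- evaluating $\left\langle\chi_A,\chi_B\right\rangle_{\mathscr{E}}$ from the disintegration (\ref{eq:L2}) and the definition (\ref{eq:L7}) of $\nu$ --- is already contained in \lemref{L2}, and the only point that warrants a word of care, the well-definedness of $W$ on the quotient, is automatic from the displayed norm identity. If a concrete description of the embedded vectors is wanted, one may combine this with part (\ref{enu:S3}) of \thmref{S2} and the Proposition immediately following it, identifying elements of $\mathscr{H}\left(\beta\right)$ with the signed finitely additive measures $\mu_f\left(A\right)=\left\langle\chi_A,f\right\rangle_{\mathscr{E}}$, $f\in\mathscr{E}$.
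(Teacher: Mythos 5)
Your proof is correct and follows the route the paper intends: the corollary is stated without a separate proof, being an immediate consequence of Lemma \ref{lem:L2} part (1) together with the RKHS machinery of Theorem \ref{thm:S2} (positive definiteness from $\sum_{i,j}\alpha_i\alpha_j\beta(A_i,A_j)=\|\sum_i\alpha_i\chi_{A_i}\|_{\mathscr{E}}^2\ge 0$, and the isometry $\beta(A,\cdot)\mapsto\chi_A$ extending to an embedding of $\mathscr{H}(\beta)$ onto the closed span of the indicators in $\mathscr{E}$). Your explicit check that null vectors map to null vectors, and that an isometry from a complete space has closed range, correctly fills in the details the paper leaves implicit.
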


\begin{prop}
\label{prop:L4}Let $\left(\mu,\rho\right)$ be as above, we denote
by $T$ the inclusion identification 
\begin{eqnarray}
\mathscr{D}_{fin}\left(\mu\right)\subset L^{2}\left(\mu\right) & \xrightarrow{\quad T\quad} & \mathscr{E}\\
\left(\varphi\in L^{2}\left(\mu\right)\right) & \xmapsto{\quad\phantom{T}\quad} & \left(\varphi\in\mathscr{E}\right).\nonumber 
\end{eqnarray}
\begin{enumerate}
\item Then $T$ is closable with respect to the respective inner products
in $L^{2}\left(\mu\right)$ and $\mathscr{E}$; see (\ref{eq:L11}). 

Moreover, for $f\in dom\left(T^{*}\right)\left(\subseteq_{dense}\mathscr{E}\right)$
we have 
\begin{equation}
T^{*}f=\Delta f
\end{equation}
where $\Delta$ is the Laplacian in (\ref{eq:L9}).
\item For $f\in dom\left(T^{*}\right)$, the induced measure $\mu_{f}$
from \secref{SA}, satisfies 
\begin{equation}
\mu_{f}\left(A\right)=\int_{A}\left(\Delta f\right)d\mu,\quad\forall A\in\mathscr{B}_{fin}.
\end{equation}
\end{enumerate}
\end{prop}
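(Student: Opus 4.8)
The plan is to verify directly the closability criterion of \thmref{T1} for the pair $(\beta,\mathscr{H})=(\beta,\mathscr{E})$ with $\beta(A,B)=\nu(A\cap B)-\rho(A\times B)$, then to read off $T^{*}$ from the identity in \lemref{L2}(2), and finally to deduce part (2) from \corref{T2}.

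Before anything else I would note that $\mathscr{E}$ embeds isometrically into $L^{2}(\rho)$ via
\[
\mathscr{E}\ni f\longmapsto \partial f,\qquad (\partial f)(x,y):=\tfrac{1}{\sqrt{2}}\bigl(f(x)-f(y)\bigr),
\]
which is an isometry onto a (closed) subspace of $L^{2}(\rho)$ by \defref{T1}; and that under this embedding the inclusion operator $T:\mathscr{D}_{fin}(\mu)\subset L^{2}(\mu)\longrightarrow\mathscr{E}$ is simply $\varphi\longmapsto\partial\varphi$. To prove that $T$ is closable it suffices to show its graph-closure is again a graph, i.e.\ that whenever $\varphi_{n}\in\mathscr{D}_{fin}(\mu)$ satisfy $\|\varphi_{n}\|_{L^{2}(\mu)}\to 0$ and $\|\varphi_{n}-g\|_{\mathscr{E}}\to 0$ for some $g\in\mathscr{E}$, then $g=0$ in $\mathscr{E}$. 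Passing to a subsequence, $\varphi_{n}\to 0$ $\mu$-a.e., say off a $\mu$-null set $N$. Because $\mu$ is the marginal of $\rho$ under both coordinate projections and $d\nu=c\,d\mu$ with $c(x)=\rho^{(x)}(M)$ finite a.e., we get $\rho\bigl((N\times M)\cup(M\times N)\bigr)=\nu(N)+\nu(N)=0$; hence $\varphi_{n}(x)-\varphi_{n}(y)\to 0$ for $\rho$-a.e.\ $(x,y)$. On the other hand, $\|\varphi_{n}-g\|_{\mathscr{E}}\to 0$ means precisely that $\partial\varphi_{n}\to\partial g$ in $L^{2}(\rho)$, so along a further subsequence $\varphi_{n}(x)-\varphi_{n}(y)\to g(x)-g(y)$ $\rho$-a.e. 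Comparing the two limits gives $g(x)-g(y)=0$ for $\rho$-a.e.\ $(x,y)$, i.e.\ $\|g\|_{\mathscr{E}}^{2}=\tfrac{1}{2}\iint_{M\times M}|g(x)-g(y)|^{2}\,d\rho=0$, so $g=0$ in $\mathscr{E}$. Thus $T$ is closable, and by the general theorem quoted in the proof of \thmref{T1}, $dom(T^{*})$ is dense in $\mathscr{E}$ (so the hypotheses of \thmref{T1} are met here).

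To identify $T^{*}$, let $f\in dom(T^{*})$. For every $\varphi\in\mathscr{D}_{fin}(\mu)$, \lemref{L2}(2) gives
\[
\langle\varphi,T^{*}f\rangle_{L^{2}(\mu)}=\langle T\varphi,f\rangle_{\mathscr{E}}=\langle\varphi,f\rangle_{\mathscr{E}}=\int_{M}\varphi\,(\Delta f)\,d\mu .
\]
Taking $\varphi=\chi_{A}$, $A\in\mathscr{B}_{fin}$, yields $\int_{A}T^{*}f\,d\mu=\int_{A}\Delta f\,d\mu$ for all such $A$; since $T^{*}f\in L^{2}(\mu)$ and $\mu$ is $\sigma$-finite, this forces $T^{*}f=\Delta f$ $\mu$-a.e.\ (in particular $\Delta f\in L^{2}(\mu)$), which is the remaining claim in (1). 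Part (2) is then immediate: by \corref{T2}, for $f\in dom(T^{*})$ the induced signed measure $\mu_{f}$ of \secref{SA} is absolutely continuous with respect to $\mu$ with $d\mu_{f}/d\mu=T^{*}f=\Delta f$; equivalently, read off $\mu_{f}(A)=\langle\chi_{A},f\rangle_{\mathscr{E}}=\int_{A}(\Delta f)\,d\mu$ directly from \lemref{L2}(2).

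The step I expect to be the crux is the closability argument: one has to be careful about the successive extraction of subsequences (first for $\mu$-a.e.\ convergence of the $\varphi_{n}$, then for $\rho$-a.e.\ convergence of $\partial\varphi_{n}$), and, above all, about transferring a $\mu$-null set to a $\rho$-null set --- which is exactly where the disintegration $d\rho(x,y)=\rho^{(x)}(dy)\,d\mu(x)$ and the finiteness of $c(x)=\rho^{(x)}(M)$ are used. Once closability and the representation $\langle\varphi,f\rangle_{\mathscr{E}}=\int_{M}\varphi\,(\Delta f)\,d\mu$ are available, computing $T^{*}$ and proving (2) are routine.
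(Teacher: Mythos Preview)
Your proof is correct, but it takes a genuinely different route from the paper's. The paper establishes closability of $T$ \emph{via} the adjoint: it asserts that the set $\{f\in\mathscr{E}\mid\Delta f\in L^{2}(\mu)\}$ is dense in $\mathscr{E}$, and then the symmetry identity $\langle T\varphi,f\rangle_{\mathscr{E}}=\langle\varphi,\Delta f\rangle_{L^{2}(\mu)}$ from \lemref{L2}(2) shows this set sits inside $dom(T^{*})$, so $dom(T^{*})$ is dense and $T$ is closable. You instead verify the sequential closability criterion directly, using the isometric embedding $\mathscr{E}\hookrightarrow L^{2}(\rho)$, $f\mapsto\partial f$, together with the key measure-theoretic observation that a $\mu$-null set $N$ lifts to a $\rho$-null set $(N\times M)\cup(M\times N)$ via the disintegration and the a.e.\ finiteness of $c$. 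Your argument is more self-contained in that it does not need the density of $\{f:\Delta f\in L^{2}(\mu)\}$ in $\mathscr{E}$ as a separate input (the paper simply asserts this without proof); on the other hand, the paper's route is shorter once that density is granted, and it makes the identification $T^{*}f=\Delta f$ immediate from the very identity used to get closability. Both approaches then finish identically: the formula for $T^{*}$ is read off from \lemref{L2}(2), and part (2) follows from \corref{T2}.
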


\begin{proof}
The details are essentially contained in the above.

It is convenient to derive the closability of $T$ as a consequence
of the following symmetry property: 

For operators $T$ and $T^{*}$, $L^{2}\left(\mu\right)\xrightarrow{\;T\;}\mathscr{E}$,
and $\mathscr{E}\xrightarrow{\;T^{*}\;}L^{2}\left(\mu\right)$, we
consider the following dense subspaces, respectively:
\begin{alignat}{1}
 & \mathscr{D}_{fin}\left(\mu\right)\subset L^{2}\left(\mu\right),\;\text{dense w.r.t. the \ensuremath{L^{2}\left(\mu\right)}-norm; and}\label{eq:T18}\\
 & \big\{ f\in\mathscr{E}\mid\Delta f\in L^{2}\left(\mu\right)\big\}\subset\mathscr{E},\;\text{dense w.r.t. the \ensuremath{\mathscr{E}}-norm \ensuremath{\left(\ref{eq:L10}\right)}.}\label{eq:T19}
\end{alignat}
Then a direct verification, using \lemref{L2}, (\ref{eq:L10})--(\ref{eq:T14}),
yields:
\begin{equation}
\langle\underset{{\scriptscriptstyle =\varphi}}{\underbrace{T\varphi}},f\rangle_{\mathscr{E}}=\left\langle \varphi,\Delta f\right\rangle _{L^{2}\left(\mu\right)}
\end{equation}
for all $\varphi\in dom\left(T\right)$ (see (\ref{eq:T18})), and
all $f\in dom\left(T^{*}\right)$ (see (\ref{eq:T19})). 

Equivalently, 
\begin{equation}
\left\langle \varphi,f\right\rangle _{\mathscr{E}}=\int_{M}\varphi\:\left(\Delta f\right)d\mu,\label{eq:T21}
\end{equation}
for functions $\varphi$ and $f$ in the respective domains. But we
already established (\ref{eq:T21}) in \lemref{L2} above; see (\ref{eq:T14}).
Now the conclusions in the Proposition follow.
\end{proof}
\begin{flushleft}
\textbf{Discrete time reversible Markov processes}
\par\end{flushleft}

Let $\left(M,\mathscr{B}\right)$ be a measure space. A Markov process
with state space $M$ is a stochastic process $\left\{ X_{n}\right\} _{n\in\mathbb{N}_{0}}$
having the property that, for all $n,k\in\mathbb{N}_{0}$, 
\begin{equation}
Prob\left(X_{n+k}\in A\mid X_{1},\cdots,X_{n}\right)=Prob\left(X_{n+k}\mid X_{n}\right)
\end{equation}
holds for all $A\in\mathscr{B}$. A Markov process is determined by
its transition probabilities 
\begin{equation}
P_{n}\left(x,A\right)=Prob\left(X_{n}\in A\mid X_{0}=x\right),
\end{equation}
indexed by $x\in M$, and $A\in\mathscr{B}$. 

It is known and easy to see that, if $\left\{ X_{n}\right\} _{n\in\mathbb{N}_{0}}$
is a Markov process, then 
\begin{equation}
P_{n+k}\left(x,A\right)=\int_{M}P_{n}\left(x,dy\right)P_{k}\left(y,A\right);
\end{equation}
and so, in particular, we have:
\begin{equation}
P_{n}\left(x,A\right)=\int_{y_{1}}\int_{y_{2}}\cdots\int_{y_{n-1}}P\left(x,dy_{1}\right)P\left(y_{1},dy_{2}\right)\cdots P\left(y_{n-1},A\right),
\end{equation}
for $x\in M$, $A\in\mathscr{B}$. 
\begin{defn}
\label{def:L5}Let $\mu$ be a $\sigma$-finite measure on $\left(M,\mathscr{B}\right)$.
We say that a Markov process is reversible iff there is a positive
measurable function $c$ on $M$ such that, for all $A,B\in\mathscr{B}$,
we have:
\begin{equation}
\int_{A}c\left(x\right)P\left(x,B\right)d\mu\left(x\right)=\int_{B}c\left(y\right)P\left(y,A\right)d\mu\left(y\right).
\end{equation}
\end{defn}

\begin{prop}
\label{prop:T6}Let $\left(M,\mathscr{B},\mu\right)$ be as usual,
and let $\left(P\left(x,\cdot\right)\right)$ be the generating transition
system for a Markov process. Then this Markov process $\left\{ X_{n}\right\} _{n\in\mathbb{N}_{0}}$
is reversible if and only if there is a positive measurable function
$c$ on $M$ such that the assignment $\rho$: 
\begin{equation}
\rho\left(A\times B\right)=\int_{A}c\left(x\right)P\left(x,B\right)d\mu\left(x\right),\quad A,B\in\mathscr{B},
\end{equation}
extends to a \uline{symmetric} sigma-additive positive measure
on the product $\sigma$-algebra $\mathscr{B}_{2}$, i.e., the $\sigma$-algebra
on $M\times M$ generated by product sets $\left\{ A\times B\mid A,B\in\mathscr{B}\right\} $. 
\end{prop}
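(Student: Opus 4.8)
The plan is to read both directions straight off the definition of reversibility (Definition~\ref{def:L5}); the only genuine work is the measure-theoretic step of promoting the rectangle assignment to a bona fide measure on $\mathscr{B}_2$. Throughout, fix a positive measurable function $c$ on $M$ (the existential quantifier on $c$ is the same in Definition~\ref{def:L5} and in the Proposition), and consider the assignment
\[
\rho_0(A\times B):=\int_A c(x)\,P(x,B)\,d\mu(x),\qquad A,B\in\mathscr{B},
\]
on the semiring $\mathscr{R}=\{A\times B\mid A,B\in\mathscr{B}\}$ of measurable rectangles (recall $\mathscr{R}$ generates $\mathscr{B}_2$ and set-differences of rectangles are finite disjoint unions of rectangles). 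Since $P(\cdot,B)$ is measurable and each $P(x,\cdot)$ is a probability measure, two applications of monotone convergence --- first integrating the identity $\chi_A(x)\chi_B(y)=\sum_n\chi_{A_n}(x)\chi_{B_n}(y)$ against $P(x,dy)$ when $A\times B=\bigsqcup_n A_n\times B_n$, then against $c\,d\mu$ --- show $\rho_0$ is countably additive on $\mathscr{R}$, i.e.\ a premeasure. Carath\'eodory's extension theorem then produces a positive $\sigma$-additive measure $\rho$ on $\mathscr{B}_2$ with $\rho(A\times B)=\int_A c(x)P(x,B)\,d\mu(x)$ for all $A,B$.

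For the implication ``reversible $\Rightarrow$ $\rho$ extends to a symmetric measure'', assume the reversibility identity holds for this $c$, i.e.\ $\rho_0(A\times B)=\rho_0(B\times A)$ for all $A,B\in\mathscr{B}$. Let $S(x,y)=(y,x)$, a measurable involution of $M\times M$ carrying $\mathscr{R}$ bijectively onto itself; reversibility says precisely that $\rho_0\circ S^{-1}=\rho_0$ on $\mathscr{R}$. Hence the Carath\'eodory outer measures of $\rho_0$ and of $\rho_0\circ S^{-1}$ agree (the infimum defining $\rho^{*}(S^{-1}E)$ ranges over the same rectangle covers, with the same masses), so their restrictions to $\mathscr{B}_2$ coincide; that is, $\rho\circ S^{-1}=\rho$, which is the claimed symmetry. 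Alternatively, when $\rho$ is $\sigma$-finite --- e.g.\ when $\int_A c\,d\mu<\infty$ for $A\in\mathscr{B}_{fin}$, cf.\ the running hypothesis (\ref{eq:L5}), so that $M\times M=\bigcup_{n,m}M_n\times M_m$ with $\rho(M_n\times M_m)\le\rho(M_n\times M)<\infty$ for a $\mu$-exhaustion $M=\bigcup_nM_n$ --- one may instead invoke uniqueness of the extension of a value prescribed on a generating $\pi$-system. So $c$ witnesses that $\rho_0$ extends to a symmetric $\sigma$-additive positive measure on $\mathscr{B}_2$.

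The converse needs nothing new. If, for some positive measurable $c$, the assignment $A\times B\mapsto\int_A c(x)P(x,B)\,d\mu(x)$ extends to a symmetric measure $\rho$ on $\mathscr{B}_2$, then for all $A,B\in\mathscr{B}$
\[
\int_A c(x)P(x,B)\,d\mu(x)=\rho(A\times B)=\rho(B\times A)=\int_B c(y)P(y,A)\,d\mu(y),
\]
which is exactly the defining identity of reversibility in Definition~\ref{def:L5}; hence $\{X_n\}_{n\in\mathbb{N}_0}$ is reversible. The one point that deserves care is thus the extension step: verifying $\sigma$-additivity of $\rho_0$ on rectangles, and checking that symmetry survives the Carath\'eodory extension --- either via the outer-measure description used above or, when available, via $\sigma$-finiteness. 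Everything else is bookkeeping with Definition~\ref{def:L5}.
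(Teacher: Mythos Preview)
Your proof is correct and follows the same natural route the paper has in mind; the paper's own proof is a one-liner (``The conclusion follows from the considerations above, and the remaining details are left to the reader''), so you have in fact supplied the omitted measure-theoretic work --- the premeasure check on rectangles via monotone convergence, the Carath\'eodory extension, and the propagation of symmetry to $\mathscr{B}_2$ via the outer-measure description (with the $\pi$-system uniqueness argument as a $\sigma$-finite alternative).
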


\begin{proof}
The conclusion follows from the considerations above, and the remaining
details are left to the reader.
\end{proof}
\begin{cor}
\label{cor:T7}Let $\left(\mu,\rho\right)$ be a pair of measures,
$\mu$ on $\left(M,\mathscr{B}\right)$, $\rho$ on $\left(M\times M,\mathscr{B}_{2}\right)$
satisfying the conditions in (\ref{eq:L3})--(\ref{eq:L4}), and
let $c$ be the function from (\ref{eq:L6}), then 
\begin{equation}
P\left(x,A\right):=\frac{1}{c\left(x\right)}\rho^{\left(x\right)}\left(A\right)
\end{equation}
defines a reversible Markov process. 
\end{cor}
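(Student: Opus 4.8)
The plan is to verify two things in turn: that the prescription $P(x,A)=c(x)^{-1}\rho^{(x)}(A)$ is a genuine Markov transition kernel, and that the resulting process satisfies the detailed-balance condition of \defref{L5}. Almost everything then falls out of the disintegration formulas (\ref{eq:L3})--(\ref{eq:L4}) together with the symmetry of $\rho$.

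First I would check that $P(x,\cdot)$ is, for $\mu$-a.a.\ $x$, a probability measure on $(M,\mathscr{B})$. By assumption (\ref{eq:L5}) --- or its relaxed a.e.\ form noted just after (\ref{eq:L6}) --- the function $c(x)=\rho^{(x)}(M)$ is finite for $\mu$-a.a.\ $x$; on the set where in addition $c(x)>0$, countable additivity of $A\mapsto P(x,A)$ is inherited from $\rho^{(x)}$, and the normalization $P(x,M)=c(x)^{-1}\rho^{(x)}(M)=1$ is immediate from (\ref{eq:L6}). On the remaining $\mu$-null set one may set $P(x,\cdot):=\delta_x$; this convention affects none of the statements below. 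Measurability of $x\mapsto P(x,A)$ for fixed $A\in\mathscr{B}$ is the measurability of the slice map $x\mapsto\rho^{(x)}(A)$, which is built into the definition of the disintegration (\ref{eq:L2}). Hence $(P(x,\cdot))_{x\in M}$ is a Markov kernel, and together with any initial distribution it generates (by the standard Ionescu--Tulcea / Kolmogorov construction) a Markov process $\{X_n\}_{n\in\mathbb{N}_{0}}$ with state space $M$.

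Next I would establish reversibility. Fix $A,B\in\mathscr{B}$. Unwinding the definition of $P$ and then using (\ref{eq:L3}),
\[
\int_A c(x)\,P(x,B)\,d\mu(x)=\int_A \rho^{(x)}(B)\,d\mu(x)=\rho(A\times B),
\]
while, symmetrically, using (\ref{eq:L4}),
\[
\int_B c(y)\,P(y,A)\,d\mu(y)=\int_B \rho^{(y)}(A)\,d\mu(y)=\rho(B\times A).
\]
Since $\rho$ is a symmetric measure on $(M\times M,\mathscr{B}_{2})$, we have $\rho(A\times B)=\rho(B\times A)$, so the two displays coincide --- which is exactly the condition in \defref{L5} with weight function $c$. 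Therefore the Markov process from the previous paragraph is reversible, as claimed.

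I do not anticipate a genuine obstacle here: the computation is a one-line consequence of the two disintegration identities once the symmetry of $\rho$ is invoked. The only point needing care is the measure-theoretic bookkeeping in the first step --- controlling the $\mu$-null set where $c$ fails to lie in $(0,\infty)$ and recalling that $x\mapsto\rho^{(x)}(A)$ is measurable by construction of the disintegration. Alternatively, one can bypass the verification altogether by quoting \propref{T6}: the assignment $\rho(A\times B)=\int_A c(x)P(x,B)\,d\mu(x)$ is, by hypothesis, a symmetric $\sigma$-additive positive measure on $\mathscr{B}_{2}$, so that proposition delivers reversibility directly.
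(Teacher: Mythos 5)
Your proposal is correct and follows essentially the same route as the paper: build the Markov process from the kernel $P(x,\cdot)$ and then observe that reversibility in the sense of \defref{L5} follows at once from the disintegration identities (\ref{eq:L3})--(\ref{eq:L4}) and the symmetry of $\rho$. You simply write out explicitly the detailed-balance computation $\int_A c\,P(\cdot,B)\,d\mu=\rho(A\times B)=\rho(B\times A)=\int_B c\,P(\cdot,A)\,d\mu$ that the paper declares ``immediate.''
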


\begin{proof}
For measurable function $f$ on $\left(M,\mathscr{B}\right)$, i.e.,
$f:M\rightarrow\mathbb{R}$, set 
\[
\left(Pf\right)\left(x\right)=\int_{M}f\left(y\right)P\left(x,dy\right).
\]
Then the path space measure for the associated Markov-process $\left\{ X_{n}\right\} _{n\in\mathbb{N}_{0}}$
is determined by its conditional expectations evaluated on cylinder
functions: 
\begin{eqnarray*}
 &  & \mathbb{E}_{X_{0}=x}\left[f_{0}\left(X_{0}\right)f_{1}\left(X_{1}\right)f_{2}\left(X_{2}\right)\cdots f_{n}\left(X_{n}\right)\right]\\
 & = & f_{0}\left(x\right)P\left(f_{1}P\left(f_{2}\left(\cdots P\left(f_{n-1}P\left(f_{n}\right)\right)\right)\right)\cdots\right)\left(x\right).
\end{eqnarray*}
The result is now immediate from \defref{L5}.
\end{proof}
\begin{cor}
Let the pair $\left(\mu,\rho\right)$ be as above, and as in \lemref{L2}.
Let $\left\{ X_{n}\right\} _{n\in\mathbb{N}_{0}}$ be the corresponding
reversible Markov process; see \corref{T7}.
\begin{enumerate}
\item Then, for measurable functions $f$ on $\left(M,\mathscr{B}\right)$,
we have the following variance formula: 
\[
VAR_{X_{0}=x}\left(f\left(X_{1}\right)\right)=\int_{M}\left|f\left(y\right)-P\left(f\right)\left(x\right)\right|^{2}P\left(x,dy\right)
\]
\item Set $d\nu=c\left(x\right)d\mu\left(x\right)$, and let $\mathscr{E}$
denote the energy Hilbert space from \defref{T1}. Then a measurable
function $f$ on $\left(M,\mathscr{B}\right)$ is in $\mathscr{E}$
iff $f-P\left(f\right)\in L^{2}\left(\nu\right)$, and $VAR_{x}\left(f\left(X_{1}\right)\right)\in L^{1}\left(\nu\right)$.
In this case, 
\[
\left\Vert f\right\Vert _{\mathscr{E}}^{2}=\frac{1}{2}\left[\int_{M}\left|f-Pf\right|^{2}d\nu+\int_{M}VAR_{x}\left(f\left(X_{1}\right)\right)d\nu\left(x\right)\right].
\]
\end{enumerate}
\end{cor}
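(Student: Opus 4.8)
The plan is to reduce both parts to elementary identities for the one-step transition kernel $P(x,\cdot)$, exploiting two structural facts coming from \corref{T7}: first, that $P(x,\cdot)$ is a \emph{probability} measure on $\left(M,\mathscr{B}\right)$ for each $x$, since $P(x,A)=c(x)^{-1}\rho^{\left(x\right)}(A)$ and $c(x)=\rho^{\left(x\right)}(M)$ by (\ref{eq:L6}), so $P(x,M)=1$; and second, that the slice measures obey $\rho^{\left(x\right)}=c(x)\,P(x,\cdot)$, which together with the disintegration $d\rho(x,y)=\rho^{\left(x\right)}(dy)\,d\mu(x)$ from (\ref{eq:L2}) and $d\nu=c\,d\mu$ lets us move freely between energy integrals and $\nu$-integrals.

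For part (i) I would argue as follows. By definition of the Markov process started at $x$, the conditional law of $X_{1}$ given $X_{0}=x$ is $P(x,\cdot)$, hence $\mathbb{E}_{X_{0}=x}(f(X_{1}))=(Pf)(x)$ and $\mathbb{E}_{X_{0}=x}(f(X_{1})^{2})=(P(f^{2}))(x)$. Expanding the square in the claimed expression gives $\int_{M}|f(y)-(Pf)(x)|^{2}P(x,dy)=(P(f^{2}))(x)-2(Pf)(x)^{2}+(Pf)(x)^{2}P(x,M)$, and using $P(x,M)=1$ this collapses to $(P(f^{2}))(x)-(Pf)(x)^{2}=VAR_{X_{0}=x}(f(X_{1}))$. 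That is the entire content of (i); there is nothing subtle here beyond noting that $P(x,\cdot)$ is normalized.

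For part (ii) I would start from the energy norm (\ref{eq:L10}), apply the disintegration (\ref{eq:L2}), and substitute $\rho^{\left(x\right)}=c(x)\,P(x,\cdot)$ to obtain $\left\Vert f\right\Vert _{\mathscr{E}}^{2}=\tfrac12\int_{M}c(x)\big(\int_{M}|f(x)-f(y)|^{2}P(x,dy)\big)\,d\mu(x)$, i.e. an integral against $d\nu=c\,d\mu$. Then, for fixed $x$, I would write $f(x)-f(y)=\big(f(x)-(Pf)(x)\big)-\big(f(y)-(Pf)(x)\big)$ and integrate in $y$ against the probability measure $P(x,\cdot)$: the cross term vanishes because $\int_{M}\big(f(y)-(Pf)(x)\big)P(x,dy)=0$, the first term contributes $|f(x)-(Pf)(x)|^{2}$, and the remaining term is exactly $VAR_{X_{0}=x}(f(X_{1}))$ by part (i). Inserting the resulting pointwise identity $\int_{M}|f(x)-f(y)|^{2}P(x,dy)=|f(x)-(Pf)(x)|^{2}+VAR_{X_{0}=x}(f(X_{1}))$ and splitting the $\nu$-integral yields the stated formula. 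Since both summands on the right are nonnegative, $\left\Vert f\right\Vert _{\mathscr{E}}^{2}<\infty$ is equivalent to $f-Pf\in L^{2}(\nu)$ \emph{and} $VAR_{x}(f(X_{1}))\in L^{1}(\nu)$, which is the asserted membership criterion.

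The computation is essentially routine; the only points deserving care are (a) checking that $(Pf)(x)$ and $(P(f^{2}))(x)$ are well defined for $\mu$-a.e.\ $x$ when $f\in\mathscr{E}$ — this follows from finiteness of the inner $\rho^{\left(x\right)}$-integrals together with $\rho^{\left(x\right)}(M)=c(x)<\infty$ a.e., as in the remark after (\ref{eq:L6}) — and (b) justifying Tonelli's theorem to pass between $d\rho$ and $\rho^{\left(x\right)}(dy)\,d\mu(x)$, which is legitimate because the integrand $|f(x)-f(y)|^{2}$ is nonnegative and $\mathscr{B}_{2}$-measurable. No genuine obstacle is anticipated; \lemref{L2} and \corref{T7} already supply all the required structure.
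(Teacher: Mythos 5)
Your proposal is correct, and it is essentially the argument the paper intends: the paper's own proof is just the one-line remark that the corollary is ``immediate'' from Proposition \ref{prop:T6} and Corollary \ref{cor:T7}, and your computation (normalization $P(x,M)=1$, the pointwise decomposition $\int_M|f(x)-f(y)|^2P(x,dy)=|f(x)-(Pf)(x)|^2+VAR_x(f(X_1))$, and disintegration of $\rho$ against $d\nu=c\,d\mu$) is exactly the routine verification being left to the reader. The only point I would tighten is your item (a): for $f\in\mathscr{E}$ the a.e.\ finiteness of $(Pf)(x)$ follows most cleanly from Cauchy--Schwarz, $\int_M|f(x)-f(y)|\,P(x,dy)\le\bigl(\int_M|f(x)-f(y)|^2P(x,dy)\bigr)^{1/2}<\infty$ for $\nu$-a.e.\ $x$, rather than from finiteness of $c(x)$ alone.
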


\begin{proof}
Immediate from the details in \propref{T6} and \corref{T7}.
\end{proof}
\begin{rem}
In the last section we pointed out the connection between reversible
Markov processes, and the Laplace operators, the energy Hilbert space,
and our results in Sections \ref{sec:SigA} and \ref{sec:SA}. However
we have postponed applications to reversible Markov processes to future
papers. For earlier papers regarding Laplace operators and associated
energy Hilbert space, see eg., \cite{MR3096586}. The literature on
reversible Markov processes is vast; see e.g., \cite{MR2584746,MR3638040,MR3441734,MR3530319}.
\end{rem}

\begin{acknowledgement*}
The present work was started during the NSF CBMS Conference, \textquotedblleft Harmonic
Analysis: Smooth and Non-Smooth\textquotedblright , held at the Iowa
State University, June 4--8, 2018, where the first named author gave
10 lectures. We thank the NSF for funding, the organizers, especially
Prof Eric Weber; as well as the CBMS participants, especially Profs
Daniel Alpay, and Sergii Bezuglyi, for many fruitful discussions on
the present topic, and for many suggestions. We are extremely grateful
to a referee who offered a number of excellent suggestions, helped
us broaden the list of pointers to additional applications of our
RKHS analysis; applications to yet more areas of probability theory,
and stochastic analysis. And finally, spotted places where corrections
were needed. We followed all suggestions. Indeed, his/her kind help
and suggestions much improved our paper.

\bibliographystyle{amsalpha}
\bibliography{ref}
\end{acknowledgement*}

\end{document}